\newtheorem{theorem}{Theorem}[section]
\newtheorem{proposition}[theorem]{Proposition}
\newtheorem{lemma}[theorem]{Lemma}
\newtheorem{corollary}[theorem]{Corollary}
\newtheorem{definition}[theorem]{Definition}
\newtheorem{question}[theorem]{Question}
\theoremstyle{remark}
\newtheorem{remark}[theorem]{Remark}
\newtheorem{example}[theorem]{\bf Example}
\renewenvironment{proof}{{\noindent\bf Proof.}}{\hfill $\Box$\par\vskip3mm}
\newcommand{\End}{{\rm End}}
\newcommand{\Ext}{{\rm Ext}}
\newcommand{\Aut}{{\rm Aut}\,}
\def\CC{{\mathbb C}}
\def\KK{{\mathbb K}}
\begin{document}
\title[Ore Extensions and Infinite Triangularization]{Ore Extensions and Infinite Triangularization}


\begin{abstract}
We give infinite triangularization and strict triangularization results for algebras of operators on infinite dimensional vector spaces. We introduce a class of algebras we call Ore-solvable algebras: these are similar to iterated Ore extensions but need not be free as modules over the intermediate subrings. Ore-solvable algebras include many examples as particular cases, such as group algebras of polycyclic groups or finite solvable groups, enveloping algebras of solvable Lie algebras, quantum planes and quantum matrices. We prove both triangularization and strict triangularization results for this class, and show how they generalize and extend classical simultaneous triangularization results such as the Lie and Engel theorems. We show that these results are, in a sense, the best possible, by showing that any finite dimensional triangularizable algebra must be of this type. We also give connections between strict triangularization and nil and nilpotent algebras, and prove a very general result for algebras defined via a recursive ``Ore" procedure starting from building blocks which are either nil, commutative or finite dimensional algebras.
\end{abstract}

\author{Jeremy R. Edison, Miodrag C. Iovanov, Alex Sistko}
\thanks{2010 \textit{Mathematics Subject Classifications}. 15A21, 15A30, 16G99, 16P40, 16N40, 16S36, 16S50, 16U20, 16U80}
\date{}
\keywords{triangularization, Ore algebras, Ore extensions, Ore solvable algebras, locally finite module, semiartinian module, strict triangularization, nil algebra}
\maketitle



\section*{Introduction}

\noindent Let $\KK$ be a field, $n$ a natural number and $V$ an $n$-dimensional $\KK$-vector space. Recall that if $X : V \rightarrow V$ is a $\KK$-linear transformation of an $n$-dimensional $\KK$-vector space $V$, we say that $X$ is \emph{upper-triangularizable} if there is a complete flag $0 = V_0 \le V_1 \le V_2 \le \ldots \le V_{n-1} \le V_n = V$ of $X$-invariant subspaces of $V$. Moreover, we say that $X$ is \emph{strictly upper-triangularizable} if the flag also satisfies $XV_i \subset V_{i-1}$ for all $i \le n$. It is a widely-known fact from linear algebra that if $X$ and $Y$ are two commuting (strictly) upper-triangularizable operators on a finite-dimensional vector space $V$, then they are \emph{simultaneously} (strictly) upper-triangularizable: there is a complete flag of $V$ which is both $X$-invariant and $Y$-invariant. Of course, what is true for the triangularizability of $X$ and $Y$ is also true for each element in the $\KK$-subalgebra of $\End_{\KK}(V)$ which they generate: hence, it also makes sense to talk about \emph{(strictly) upper-triangularizable algebras}. 

Triangularizability results have been investigated from several angles. Two classical theorems from the theory of Lie algebras, Lie's Theorem and Engel's Theorem, yield conditions under which a finite-dimensional solvable (resp. nilpotent) Lie algebra is upper-triangularizable (resp. strictly upper triangularizable) \cite{H}. Beyond this, several authors have studied infinite-dimensional versions of upper-triangularizable algebras, often through analytic and operator-theoretic means \cite{M1,M2}. In this treatment, a subalgebra $A \subset \End_{\KK}(V)$ of operators on a (possibly infinite-dimensional) vector space $V$ is upper triangularizable if $V$ contains a well-ordered basis with respect to which elements of $A$ act in a triangular fashion. Essentially, this means that $V$ has a (possibly infinite) composition series as an $A$-module with one-dimensional simple factors. In module theoretic terms, this means that $V$ is a semiartinian $A$-module with 1-dimensional simple factors. A particularly important case of this is when such a module is also locally finite; when the algebra is Noetherian, triangularization reduces to this special type of locally finite modules (and this is the case in many classical situations of simultaneous triangularization). We will say that such a locally finite module with 1-dimensional simple factors is \emph{locally p-finite} (where p stands for ``pointed"). The ``strict'' version of this definition requires that the simple factors all lie in a single isomorphism class, in which case we call the module \emph{locally p-finite homogeneous}.  

Often, triangularization results are statements which say that operators satisfying certain types of algebraic equations are simultaneously triangularizable. Hence, instead of starting with a vector space $V$ and asking which subalgebras of $\End_{\KK}(V)$ are triangularizable, one could start instead with an associative algebra $A$ (given by certain type of relations) and ask which of its modules are locally p-finite or locally p-finite homogeneous. This allows for a purely algebraic and representation-theoretic treatment of (strict) upper-triangularizability. In this article, we provide such a treatment for not-necessarily finite-dimensional associative algebras. To do this, we introduce analogues to solvable/nilpotent groups and Lie algebras in the context of associative algebras and provide conditions under which they yield triangularizable modules. We call these algebras \emph{Ore-solvable algebras} (due to their similarity to Ore extensions). Specifically, we will say that an algebra $A$ is Ore-solvable if there exists of a sequence of subalgebras $\KK = A_0 \subset A_1 \subset \ldots \subset A_{n-1} \subset A_n = A$ and elements $x_i \in A_i$ (for $0 \le i \le n$) satisfying the following condition:

\vspace{.2cm}

($*$) for each $i\geq 1$, $A_i$ is generated by $A_{i-1}$ and $x_i$ and $A_{i-1}x_i + A_{i-1} = x_iA_{i-1} + A_{i-1}$.

\vspace{.2cm}
 

Ore-solvable algebras contain many well-known classes of algebras, including: finitely generated commutative algebras; enveloping algebras of solvable or nilpotent Lie algebras; group algebras of finite solvable groups or of polycyclic groups; and iterated Ore extensions. In particular, this includes various types of skew polynomial rings, quantum matrix algebras and quantum affine algebras. Such algebras are of importance in many fields and have received much attention. 

The above condition ($*$) implies the existence of functions $\sigma_i, \theta_i : A_{i-1} \rightarrow A_{i-1}$ for each $i$ satisfying $ax_i = x_i\sigma_i(a) + \theta_i(a)$. We refer to the collection $(x_i,\sigma_i,\theta_i)$ as \emph{left Ore datum} for $A$. Similarly, we will also have a right version of the Ore datum $(x_i,\sigma_i',\theta_i')$; hence, we will have left- and right- versions of Ore-solvable algebras which satisfy one-sided versions of condition ($*$) above. A single algebra $A$ may have many different collections of Ore datum. The triangularizability results of this article are often phrased in terms of conditions on the Ore datum of an Ore-solvable algebra. 

 Our main results on upper-triangularizability are Theorems \ref{t.genlie}, \ref{t.genlie2} and \ref{t.3} which can be combined into the following: 

\begin{theorem} 
Let $A$ be an Ore-solvable algebra over a field $\KK$ with Ore-datum $(x_i,\sigma_i,\sigma_i',\theta_i,\theta_i')$. Suppose that for each $i$, we can find a generating set $S_i$ for $A_{i-1}$ such that at least one of the following holds: 
\begin{enumerate} 
\item ${\rm char}(\KK)=0$, $\theta_i(a) \in [A_i,A_i]$ for each $a \in S_i$, and the semigroup generated by $\sigma_i$ acts trivially on the one-dimensional algebra characters of $A_{i-1}$ (considered inside the set of functions $A\rightarrow A$); or
\item ${\rm char}(\KK)=0$, $\theta_i(a) \in [A_i,A_i]$ for each $a \in S_i$, and $\sigma_i$ is an algebra endomorphism of $A_{i-1}$ such that whenever $\lambda$ is an algebra character of $A_{i-1}$ and $k$ a positive integer with $\lambda \neq \lambda\circ \sigma_i^k$, then $\Ext^1(\lambda\circ \sigma_i^k,\lambda) = 0$; or 
\item For all $a\in S_i$, either $\theta_i(a)$ is nilpotent or $\theta_i(a) \in [A_{i-1},A_{i-1}]$, and the orbits of algebra characters of $A_{i-1}$ under the action of $\sigma_i$ are each either trivial or infinite; 
\end{enumerate}
\noindent Then every simple $A$-module is $1$-dimensional, and a $A$-module $V$ is $p$-finite (equivalently, triangularizable in this case) if and only if each $x_i$ acts locally finitely on $V$. In particular, such an algebra with a $p$-finite faithful module is upper-triangularizable.
\end{theorem}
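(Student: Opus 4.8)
I would induct on the length $n$ of the chain $\KK = A_0 \subset \cdots \subset A_n = A$, reducing every assertion to a single Ore step $A_i = A_{i-1}\langle x_i\rangle$ (the case $A_0 = \KK$ being trivial). The key preliminary fact is that $(*)$ forces $A_i = \sum_{k\ge 0} x_i^k A_{i-1} = \sum_{k\ge 0} A_{i-1}x_i^k$, obtained by pushing $x_i$ past elements of $A_{i-1}$ via $ax_i = x_i\sigma_i(a)+\theta_i(a)$ and its right analogue; in particular $A$, and hence every cyclic $A$-module, is at most countable-dimensional over $\KK$.

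For the first assertion (every simple $A$-module is one-dimensional), assume it for $A_{n-1}$, so its simple modules are characters $\KK_\lambda$, and let $S$ be a simple $A$-module. Using that a nonzero cyclic $A_{n-1}$-submodule of $S$ has a one-dimensional quotient, together with Schur's lemma and the countable-dimension remark (and a reduction to finite-dimensional $S$, which is part of the work), I would produce a common $A_{n-1}$-eigenvector $v\in S$, say $av = \lambda(a)v$. Then $S = Av = \sum_k\KK x_i^k v$, and the ascending ladder $W_k := \KK v + \cdots + \KK x_i^k v$ is a chain of $A_{n-1}$-submodules with union $S$ whose nonzero quotients are, by a short computation with $ax_i = x_i\sigma_i(a)+\theta_i(a)$, the characters $\KK_{\lambda\circ\sigma_i^k}$. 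If this ladder stabilizes ($W_m = W_{m-1}$) then $W_{m-1}$ is $x_i$-stable, hence an $A$-submodule, so $S = W_{m-1}$ is finite-dimensional; the crux is to show $m = 1$, and hypotheses (1)--(3) supply three mechanisms. (i) When $\sigma_i$ fixes $\lambda$ (automatic under (1); the trivial-orbit case of (3)), all $A_{n-1}$-composition factors of $S$ equal $\lambda$, so in characteristic $0$ the trace of $\theta_i(a)\in[A_i,A_i]$ on $S$ is simultaneously $0$ and $(\dim S)\lambda(\theta_i(a))$, forcing $\lambda(\theta_i(a)) = 0$ on the generating set $S_i$ and then, via $\sigma_i$-derivation propagation, on all of $A_{i-1}$ (in the trivial-orbit case of (3) this is immediate: $\theta_i(a)$ is nilpotent, hence acts as $0$, or lies in $[A_{i-1},A_{i-1}]$, which $\lambda$ kills); hence every $x_i^k v$ is again a $\lambda$-eigenvector, $A_{i-1}$ acts on all of $S$ through $\lambda$, and $S$ factors through $A/\mathfrak{a}$ with $\mathfrak{a}$ the ideal generated by $\ker\lambda$ --- a commutative quotient of $\KK[t]$, forcing $\dim S = 1$ since $\KK$ is algebraically closed. (ii) Under (2) the vanishing of $\Ext^1_{A_{i-1}}(\lambda\circ\sigma_i^k,\lambda)$ whenever $\lambda\ne\lambda\circ\sigma_i^k$ splits the ladder's subquotients off as genuine eigenvectors; feeding these back and using the characteristic-$0$/commutator bound reduces to situation (i) or produces an $A$-submodule of codimension $\le 1$, again forcing $\dim S = 1$. (iii) Under (3) with $\lambda$ of infinite $\sigma_i$-orbit, the $\chi_j := \lambda\circ\sigma_i^j$ are pairwise distinct, so $S|_{A_{n-1}}$ is a direct sum of one-dimensional modules with distinct characters; comparing coefficients in $a(x_i w_{m-1}) = x_i\sigma_i(a)w_{m-1}+\theta_i(a)w_{m-1}$ forces the top summand to be $x_i$-stable, hence an $A$-submodule, contradicting simplicity unless $\dim S = 1$.

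For the criterion, ``$p$-finite $\Rightarrow$ each $x_i$ acts locally finitely'' is immediate, since a locally finite $A$-module has every element generating a finite-dimensional $A$-submodule, a fortiori a finite-dimensional $\KK[x_i]$-submodule. Conversely, if each $x_i$ acts locally finitely on $V$, I would show $V$ is a locally finite $A$-module --- equivalently (by the first assertion) $p$-finite, equivalently (by the earlier reduction of triangularizability to local finiteness for Noetherian-type algebras) triangularizable --- by induction on $n$: $V$ is locally finite over $A_{n-1}$, so $M := A_{n-1}v$ is finite-dimensional, hence so is $N := \KK[x_n]M$, and $N$ is an $A$-submodule since it is visibly $x_n$-stable and $A_{n-1}$-stable, the latter because $ax_n^k \in \sum_{j\le k}x_n^j A_{n-1}$ for all $a\in A_{n-1}$, so $ax_n^k M \subseteq \sum_j x_n^j A_{n-1}M = \sum_j x_n^j M \subseteq N$. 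Thus every cyclic $A$-submodule of $V$ is finite-dimensional. The closing ``in particular'' then unwinds the definitions: a faithful $p$-finite module exhibits $A$ as a subalgebra of $\End_\KK(V)$ acting triangularly on $V$.

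The main obstacle I anticipate is the collapse argument (i)--(iii): arranging the bookkeeping so that the constraints on the $\theta_i$ (nilpotent versus commutator, and the role of characteristic $0$) mesh correctly with the $\sigma_i$-orbits of the relevant characters, and --- more delicately --- excluding a priori infinite-dimensional simple modules, where no trace is available and one must instead argue directly via the filtration $W_\bullet$ and Schur's lemma over $\KK$.
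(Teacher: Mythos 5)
Your overall strategy coincides with the paper's: induction on the length of the chain; the observation that $A_{n-1}$ acts triangularly on the ladder $v, x_iv, x_i^2v,\dots$ with diagonal characters $\lambda\circ\sigma_i^k$ (this is the paper's Lemma \ref{l.comp}); the trace argument in characteristic $0$ for hypothesis (1); the $\Ext$/block-splitting reduction to the fixed-character case for (2); the trivial-or-infinite orbit dichotomy for (3); and the ``locally finite over each $x_i$ implies locally finite over $A$'' bootstrap of Theorem \ref{t.1st} for the second assertion. This is exactly how Theorems \ref{t.genlie}, \ref{t.genlie2} and \ref{t.3} are proved.

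The genuine gap is the step you defer as ``a reduction to finite-dimensional $S$, which is part of the work'': no such reduction exists, because the unrestricted first assertion is false as literally stated. The first Weyl algebra $\KK\langle x,y\rangle/(yx-xy-1)$ in characteristic $0$ is Ore-solvable with $\sigma=\mathrm{id}$ and $\theta(x)=-1=[x,y]\in[A,A]$, so hypothesis (1) holds; yet $\KK[t]$, with $x$ acting as multiplication by $t$ and $y$ as $d/dt$, is an infinite-dimensional simple module (and this algebra has no nonzero finite-dimensional modules at all). What the paper proves, and what the second assertion actually requires via Proposition \ref{p.fdsimplpf}, is that every \emph{finite-dimensional} simple module is $1$-dimensional; all of your trace, $\Ext$ and orbit arguments take place on a finite-dimensional $S$ and are sound there, but the countable-dimension/Schur's-lemma device cannot manufacture a common $A_{n-1}$-eigenvector inside an infinite-dimensional simple, so you should restrict the first assertion accordingly rather than try to prove it in full. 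A second, smaller point: you silently invoke algebraic closedness of $\KK$ at the final step (choosing an $x_i$-eigenvector to conclude $\dim S=1$). This hypothesis is genuinely needed --- $\CC$ is a $2$-dimensional simple module over the commutative, hence Ore-solvable, algebra $\RR[x]$ --- and it appears in the paper's Theorems \ref{t.genlie}--\ref{t.3} even though it is omitted from the combined statement above.
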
 

\noindent This result can be seen as a generalization of Lie's classical theorem on simple representations of finite-dimensional solvable Lie algebras. In Theorem \ref{t.na1} we also prove the following result on strict upper- triangularizability:  

\begin{theorem}
Let $A$ be a left Ore-solvable algebra, $V$ an $A$-module and assume that the Ore datum of $A$ satisfies the following two conditions: 
\begin{enumerate}
\item Each algebra $A_i$ from the filtration of $A$ is generated by a set $S_i$, such that for each $a\in S_{i-1}$, $\theta_i(a)$ acts locally nilpotent on $V$; and
\item each $x_i$ is a locally nilpotent operator on $V$.  
\end{enumerate}
Then $V$ is locally p-finite homogeneous (equivalently, strictly triangularizable), and every simple subquotient of $V$ has character $\mu:A\rightarrow \KK$ which satisfies $\mu(x_i)=0$ for all $i$.
\end{theorem}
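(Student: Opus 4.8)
The plan is to induct on the length $n$ of the Ore filtration $\KK = A_0 \subset A_1 \subset \cdots \subset A_n = A$, proving simultaneously that $V$ is locally p-finite homogeneous as an $A$-module and that every simple subquotient has a character vanishing on all the $x_i$. The base case $n=0$ is trivial since $A_0 = \KK$. For the inductive step, I would first restrict $V$ to $A_{n-1}$; by the induction hypothesis (the Ore datum for $A_{n-1}$ is a truncation of that for $A$, and conditions (1) and (2) are inherited), $V$ is a locally p-finite homogeneous $A_{n-1}$-module, so every finitely generated $A_{n-1}$-submodule has a finite filtration whose factors are all isomorphic to a single $1$-dimensional module with character $\mu_0 : A_{n-1} \to \KK$ satisfying $\mu_0(x_i) = 0$ for $i < n$. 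The key point is that this character is unique (homogeneity), so it does not depend on which finitely generated submodule we chose, and hence $V$ itself is "$\mu_0$-primary" over $A_{n-1}$.

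The heart of the argument is then to upgrade from $A_{n-1}$ to $A$ using the two hypotheses on $x = x_n$. Given any $v \in V$, consider the finitely generated $A$-submodule $W = Av$. Since $x$ acts locally nilpotently on $V$, the $A_{n-1}$-submodule $W_0 = A_{n-1}v$ together with finitely many of its $x$-translates $x^j v$ generate $W$; more precisely, using the Ore relation $a x = x \sigma_n(a) + \theta_n(a)$ one pushes all occurrences of $x$ to the left, so $W = \sum_{j \ge 0} A_{n-1} x^j v$, and local nilpotence of $x$ makes this sum finite, giving a finitely generated $A_{n-1}$-module $W$. By the previous paragraph, $W$ has a finite $A_{n-1}$-composition series with all factors $\cong \mu_0$. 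Now I want to refine a chain of $A_{n-1}$-submodules of $W$ to one of $A$-submodules: filter $W$ by the $A_{n-1}$-submodules $W^{(k)} = \sum_{j \ge k} A_{n-1} x^j v$ (a finite descending chain terminating at $0$), and observe that each $x W^{(k)} \subseteq W^{(k+1)}$ by construction while $A_{n-1} W^{(k)} \subseteq W^{(k)}$, so each $W^{(k)}$ is actually an $A$-submodule and the successive quotients $W^{(k)}/W^{(k+1)}$ are $A_{n-1}$-modules on which $x$ acts as $0$. Hence these quotients are $A$-modules via the character extending $\mu_0$ by $x \mapsto 0$; refining each $W^{(k)}/W^{(k+1)}$ by the induction hypothesis (its $A_{n-1}$-composition factors are all $\mu_0$, and $x=0$ on it), we get an $A$-composition series of $W$ all of whose factors are the $1$-dimensional module $\mu$ with $\mu|_{A_{n-1}} = \mu_0$ and $\mu(x_n) = 0$. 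This establishes that $V$ is locally p-finite homogeneous over $A$ and pins down the character.

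The step I expect to be the main obstacle is verifying that $W = \sum_{j\ge 0} A_{n-1} x^j v$ is a finitely generated $A_{n-1}$-module, because the "pushing $x$'s to the left" uses the functions $\sigma_n, \theta_n$ and one must ensure this process terminates and stays inside finitely many $A_{n-1} x^j v$. This is exactly where hypothesis (2), local nilpotence of $x = x_n$ on $V$, is essential: it guarantees $x^N v = 0$ for some $N$ depending on $v$, so the sum is finite; and hypothesis (1), local nilpotence of the $\theta_n(a)$, is what will be needed to make the analogous argument work uniformly when I need to bound the "lengths" after commuting generators past $x$ — more carefully, one shows by a double induction (on the number of generators from $S_{n-1}$ appearing in a word and on the power of $x$) that every element of $Av$ lies in $\sum_{j=0}^{M} A_{n-1} x^j v$ for a uniform $M$, with the $\theta_n$-terms contributing lower-order corrections that are absorbed by local nilpotence. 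Once finite generation over $A_{n-1}$ is secured, the rest is the bookkeeping sketched above, together with a routine check that the displayed subspaces $W^{(k)}$ are genuinely $A$-stable.
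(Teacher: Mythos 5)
There is a genuine gap, and it sits exactly in the step you dismiss as ``a routine check that the displayed subspaces $W^{(k)}$ are genuinely $A$-stable'': they are not. First, the ordering is wrong: for a \emph{left} Ore-solvable algebra the relation is $ax = x\sigma_n(a)+\theta_n(a)$, which pushes $x$ to the \emph{left} of elements of $A_{n-1}$, so what you get is $Av=\sum_j x^jA_{n-1}v$, not $\sum_j A_{n-1}x^jv$; passing from one ordering to the other would require the right-hand Ore condition, which is not assumed. Second, and more seriously, in either ordering the degree filtration by powers of $x$ is not a filtration by $A$-submodules: iterating $ax=x\sigma_n(a)+\theta_n(a)$ gives $ax^j\in\sum_{i\le j}x^i A_{n-1}$, where the $\theta_n$-contributions land in strictly \emph{lower} degrees, so $A_{n-1}$ does not preserve $\sum_{j\ge k}x^jA_{n-1}v$; and with your ordering, $x\bigl(A_{n-1}x^jv\bigr)$ cannot be moved into $A_{n-1}x^{j+1}v$ at all (and even with a right Ore relation $xa=\sigma'(a)x+\theta'(a)$ the $\theta'$-term stays in degree $j$). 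So the claimed composition series with $x$ acting as zero on each factor does not exist by this construction. Relatedly, you misplace the role of hypothesis (1): it is not needed for finite generation over $A_{n-1}$ (that follows from hypothesis (2) alone, via the paper's Theorem \ref{t.1st}); it is needed precisely to make the composition factors one-dimensional, which your sketch never actually establishes.

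The paper's argument repairs exactly this point. After using (2) to get local finiteness, one passes to a \emph{simple} subquotient $W$, takes an $A_{n-1}$-eigenvector $v$ of weight $\lambda$ (induction), and computes explicitly (Lemma \ref{l.comp}) that $a\cdot x^iv=\lambda(\sigma^i(a))x^iv+(\text{terms in }x^jv,\ j<i)$, so that $\operatorname{span}\{x^iv\}$ \emph{is} an $A_n$-submodule and hence equals $W$. Hypothesis (1) then forces $\lambda(\theta_n(a))=0$ (a locally nilpotent operator has only the eigenvalue $0$), which kills the lower-order terms and makes $W=\bigoplus_i\KK x^iv$ a direct sum of $A_{n-1}$-eigenlines; nilpotency of $x$ on $W$ produces a top nonzero vector $x^mv$ that is simultaneously an $A_{n-1}$- and $x$-eigenvector (with $x$-eigenvalue $0$), so it spans an $A_n$-submodule and $W$ is one-dimensional with $\mu(x_n)=0$. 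Your strategy would need to be replaced by this eigenvector computation (or an equivalent one); the filtration by powers of $x$ applied to an arbitrary $v$ cannot be made to work because the two generators preserve opposite ``halves'' of that filtration.
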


We also note a converse of the above, which shows that if $V$ is finite dimensional, then every triangularization or strict triangularization must come from an Ore situation as above, which is of special type and close to a Lie algebra. We show every finite dimensional (strictly) triangularizable algebra can be presented as an Ore algebra with subalgebras forming a complete flag, whose Ore datum is of the type $\sigma_i={\rm Id}$ and $\theta_i(a)=[x_i,a]$. We note that strictly upper-triangular finite-dimensional algebras are nothing more than local connected algebras, and show that they form a class of Ore-solvable algebras satisfying natural nilpotency conditions. More precisely, Propositions \ref{p.OreNilAlg} and \ref{FinDimLCA} combine to yield the following:

\begin{theorem} 
Let $A$ be a $k$-algebra. Then the following are equivalent: 
\begin{enumerate} 
\item $A$ is a finite-dimensional strictly-upper triangularizable algebra.  
\item $A$ is a finite-dimensional local connected algebra.
\item $A$ is a left Ore-solvable algebra with Ore datum $(x_i,\sigma_i, \theta_i)$ satisfying the following: 
\begin{enumerate} 
\item $\sigma_i={\rm Id}$, $\theta_i(-)=[x_i,-]$.
\item There is a set $S_i$ of  generators of $A_i$ such that for each $a \in S_{i-1}$, $\theta_i(a)$ is nilpotent.
\item The $x_i$ are nilpotent for each $i$.
\end{enumerate}
\end{enumerate}
\end{theorem}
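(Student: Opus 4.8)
The plan is to prove the two equivalences $(1)\Leftrightarrow(2)$ and $(2)\Leftrightarrow(3)$ separately, these being the content of Propositions~\ref{FinDimLCA} and~\ref{p.OreNilAlg}. Throughout I use the standard description of a finite-dimensional local connected algebra as an algebra of the form $A=\KK\cdot 1\oplus J$ with $J=J(A)$ nilpotent, equivalently $A/J\cong\KK$, equivalently $A$ has a single simple module up to isomorphism and that module is one-dimensional. For $(2)\Rightarrow(1)$ one refines the radical filtration $A\supseteq J\supseteq J^{2}\supseteq\cdots\supseteq J^{k}=0$ of the regular module ${}_{A}A$, each successive layer being a vector space over $A/J\cong\KK$, into a complete flag of left ideals with one-dimensional factors; all factors are isomorphic to the unique simple module $A/J$ and each is carried into the next by $J$, so ${}_{A}A$ is a faithful locally $p$-finite homogeneous module and $A$ is strictly triangularizable. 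For $(1)\Rightarrow(2)$, let $V$ be a faithful locally $p$-finite homogeneous module, $\chi$ the common character of its simple subquotients and $\mathfrak m=\ker\chi$. The annihilators of the finitely generated (hence finite-dimensional) submodules of $V$ form a family closed under finite intersection with intersection $\mathrm{ann}_{A}(V)=0$; as $A$ is finite-dimensional this family has a least member, so some finite-dimensional $W_{0}\le V$ is already faithful. On $W_{0}$ each $a\in\mathfrak m$ acts by a strictly upper-triangular operator relative to a flag refining the composition series, hence nilpotently, so $\mathfrak m$ is a nil ideal of the finite-dimensional algebra $A$ and is therefore nilpotent; thus $\mathfrak m\subseteq J(A)$, while $J(A)\subseteq\mathfrak m$ since $A/\mathfrak m\cong\KK$ is semisimple. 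Hence $\mathfrak m=J(A)$ and $A/J(A)\cong\KK$, i.e.\ $A$ is finite-dimensional local connected.

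For $(3)\Rightarrow(2)$ (which also yields $(3)\Rightarrow(1)$): condition ($*$) gives $A_{i}=\sum_{j\ge 0}A_{i-1}x_{i}^{\,j}$, a finite sum since each $x_{i}$ is nilpotent by (c), so inductively $A$ is finite-dimensional. Now apply Theorem~\ref{t.na1} to $V={}_{A}A$: by (b), for $a\in S_{i-1}$ left multiplication by $\theta_{i}(a)=[x_{i},a]$ is a nilpotent, in particular locally nilpotent, operator on ${}_{A}A$; by (c) left multiplication by each $x_{i}$ is nilpotent on ${}_{A}A$. Hence ${}_{A}A$ is locally $p$-finite homogeneous; being finite-dimensional and faithful, this simultaneously exhibits $A$ as a strictly triangularizable algebra and forces every composition factor of ${}_{A}A$ to be isomorphic to one one-dimensional simple module, so that $A$ has a single one-dimensional simple module, i.e.\ $A$ is finite-dimensional local connected.

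The remaining implication $(2)\Rightarrow(3)$ is the substantive one, and I would prove it by induction on $\dim A$, the case $\dim A=1$ being vacuous. If $\dim A>1$ let $k$ be the nilpotency index of $J$ (so $k\ge 2$) and choose $0\neq x\in J^{k-1}$; then $Jx=xJ=J^{k}=0$ so $x$ is central, $\KK x=Ax=xA$ is a two-sided ideal, and $x^{2}\in J^{2(k-1)}\subseteq J^{k}=0$. With $\pi\colon A\to\bar A:=A/\KK x$, the quotient $\bar A$ is again finite-dimensional local connected and of strictly smaller dimension, so by induction it carries a chain $\KK=\bar A_{0}\subset\cdots\subset\bar A_{\bar n}=\bar A$ realizing $(3)$, with Ore data $(\bar x_{i},\mathrm{Id},[\bar x_{i},-])$ and generating sets $\bar S_{i}$. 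I would then set $A_{1}=\KK\oplus\KK x$ and $A_{j}=\pi^{-1}(\bar A_{j-1})$ for $j\ge 1$, take $x_{1}=x$ and $x_{j}$ any preimage of $\bar x_{j-1}$ for $j\ge 2$, and take $S_{j}$ to consist of $x$ together with preimages of $\bar S_{j-1}$. One then checks: each $A_{j}$ is a subalgebra, and since it contains $\ker\pi$ it is the subalgebra generated by $A_{j-1}$ and $x_{j}$; pulling back $[\bar x_{j-1},\bar A_{j-2}]\subseteq\bar A_{j-2}$ gives $[x_{j},A_{j-1}]\subseteq A_{j-1}$, so ($*$) holds with the commutator datum $\sigma_{j}=\mathrm{Id}$, $\theta_{j}(-)=[x_{j},-]$; each $x_{j}$ lies in $J(A)$ (since $\bar x_{j-1}$, a nilpotent element of the local algebra $\bar A$, lies in $J(\bar A)$) and is thus nilpotent; and for $a\in S_{j-1}$, $\theta_{j}(a)=[x_{j},a]$ is either $0$ (when $a=x$) or a preimage of the nilpotent element $[\bar x_{j-1},\bar a]$, hence itself nilpotent since $(\ker\pi)^{2}=0$. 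The initial step $A_{0}\subset A_{1}$ is immediate.

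The main obstacle is precisely this last implication $(2)\Rightarrow(3)$: one must produce a \emph{single} ascending chain of subalgebras along which every Ore datum is simultaneously of commutator type ($\sigma_{i}=\mathrm{Id}$) \emph{and} satisfies the two nilpotency requirements (b) and (c). The device of splitting off a one-dimensional central square-zero ideal at the socle and inducting is what makes this work, and the care lies in verifying that all of (a), (b), (c) descend through $\pi$ — in particular that nilpotency of an element is preserved when passing from $\bar A$ back to $A$, which holds precisely because $\ker\pi$ is central with $(\ker\pi)^{2}=0$ — together with the observation that the flag must be built from the socle outward (inserting $x$ at the bottom) rather than by naively lifting a flag of $\bar A$, so that the bottom Ore datum is already of the required type.
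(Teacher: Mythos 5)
Your proof is correct, but the substantive direction $(2)\Rightarrow(3)$ takes a genuinely different route from the paper's Proposition \ref{FinDimLCA}. The paper works top-down: it picks a maximal sub-bimodule $I$ of $J=J(A)$ (necessarily of codimension $1$ in $J$), sets the penultimate term of the chain to be the \emph{subalgebra} $\KK 1\oplus I$ with $x\in J\setminus I$, notes that $x$ and $[x,I]\subseteq I\subseteq J$ are nilpotent, checks that $\KK 1\oplus I$ is again local pointed, and inducts on that subalgebra. You instead work bottom-up from the socle: you split off a central square-zero element $x\in J^{k-1}$, induct on the \emph{quotient} $A/\KK x$, and lift the chain through $\pi$; the observation that $\ker\pi$ is central with $(\ker\pi)^2=0$ is exactly what makes nilpotency of the $x_j$ and of the $\theta_j(a)$ lift, and your verification of this is sound. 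The paper's version buys a complete flag of subalgebras directly (each step has codimension one) and reuses its earlier codimension-one analysis from the ``converse statement'' section, while yours makes conditions (b) and (c) essentially automatic at the cost of tracking preimages. For $(3)\Rightarrow(2)$ the paper (Proposition \ref{p.OreNilAlg}) argues via ``Ore-solvable $\Rightarrow$ Noetherian'' together with ``every module semiartinian $\Rightarrow$ artinian''; your direct finite-dimensionality argument from the nilpotency of the $x_i$ is more elementary (only note that with the paper's left Ore convention the powers of $x_i$ collect on the left, $A_i=\sum_j x_i^jA_{i-1}$, though with $\sigma_i=\mathrm{Id}$ either side works). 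The equivalence $(1)\Leftrightarrow(2)$ is treated as essentially known in the paper; your argument for it, including the reduction to a faithful finite-dimensional submodule $W_0$ via minimality of annihilators, is correct and fills in what the paper leaves implicit.
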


In the last section, we examine the connection between nil algebras, nilpotent algebras and strict triangularizability. This is motivated by classical results, such as the fact that finitely many commuting operators on a finite vector space are simultaneously triangularizable, or Levitski's theorem on finite semigroups of nilpotent matrices. We show that the strict triangularizability of finitely many operators on an arbitrary vector space reduces to the locally (p)-finite triangularizability mentioned before; that is, we show that a strictly triangularizable module over a finitely generated algebra must be locally p-finite. We say that an algebra is connected if, up to isomorphism, it has a unique type of finite dimensional simple module which is furthermore of dimension 1. We also introduce the idea of recursively building algebras from pieces, similar to procedures used in solvable groups or Lie algebras; briefly, if P is a property of an non-unital algebra, we say $A$ is $n$-recursively P if there exists a sequence of subalgebras $0=A_0\subseteq A_1\subseteq \dots \subseteq A_n=A$ and subalgebras $B_i\subseteq A_i$ such that $B_iA_{i-1}\subseteq A_{i-1}B_{i}+A_{i-1}$ and $A_i$ is generated by $B_i$ and $A_{i-1}$, and the $B_i$ are $k_i$-recursively P for some $k_i<n$. We say that a $0$-recursively P algebra is simply one with property P, and a recursively P algebra is one which is $n$-recursively P for some $n$. We call the initial algebras $B_i$ used at step $0$ the constituents of $A$. For unital algebras the same definition applies, but we need to include non-unital algebras in order to have nil and nilpotent algebras.  The following is our general strict simultaneous triangularization result, in the spirit of other results stating that if certain operators are triangularizable, then they are simultaneously triangularizable.


\begin{theorem} 
Let $A$ be a recursively cyclic-finite algebra (an algebra built recursively as above from either 1-generated algebras or finite dimensional algebras). Then an $A$-module $V$ is strictly triangularizable if and only if each of the constituents of $A$ act locally nilpotent on $V$. In this case, every $A$-module is locally p-finite homogeneous. 
\end{theorem}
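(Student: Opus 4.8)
The plan is to reduce the statement to the two triangularization results already proved in the paper (the ``Lie-type'' Theorem and the strict Theorem \ref{t.na1}), by induction on the recursion depth $n$ of the algebra $A$. The base case $n=0$ is where $A$ is either $1$-generated (cyclic) or finite dimensional. For a $1$-generated connected algebra $A=\KK\langle x\rangle$, an $A$-module $V$ is strictly triangularizable exactly when $x$ acts locally nilpotently: if $x$ is locally nilpotent then every finitely generated submodule is finite dimensional with $x$ nilpotent on it, hence is a successive extension of the trivial module; conversely strict triangularizability forces $x$ to act locally nilpotently by definition of the strict flag. For a finite dimensional connected algebra $A$, strict triangularizability of $V$ is equivalent to $V$ being a sum of finite dimensional submodules each of which is annihilated by a power of the radical, which — because $A$ is connected, so $A/\mathrm{rad}(A)\cong\KK$ and $\mathrm{rad}(A)$ is the unique maximal ideal — happens iff the (finitely many) generators of $\mathrm{rad}(A)$, equivalently the constituent $A$ itself, act locally nilpotently. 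In both base cases ``every module is locally p-finite homogeneous'' then follows because any finitely generated submodule is finite dimensional and uni-serial up to isomorphism of factors.

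For the inductive step, suppose $A$ has recursion data $0=A_0\subseteq\dots\subseteq A_n=A$ with generating subalgebras $B_i\subseteq A_i$ satisfying $B_iA_{i-1}\subseteq A_{i-1}B_i+A_{i-1}$, each $B_i$ being $k_i$-recursively cyclic-finite with $k_i<n$, and the constituents of $A$ being the union of the constituents of the $B_i$. I would first observe that the relation $B_iA_{i-1}\subseteq A_{i-1}B_i+A_{i-1}$ is precisely a (possibly non-principal, ``block'') Ore condition: choosing algebra generators of $B_i$ over $\KK$ (recursively, ending at the constituents) and running the single-generator Ore step of condition $(*)$ for each such generator in turn, one refines the filtration $\{A_i\}$ to an honest left Ore-solvable filtration $\KK=A_0'\subseteq A_1'\subseteq\dots\subseteq A_N'=A$ whose Ore data $(y_j,\sigma_j,\theta_j)$ are inherited from the recursive structure. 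The key point to check here is that the constituents acting locally nilpotently on $V$ propagates: if all constituents act locally nilpotently, then (applying the inductive hypothesis internally to each $B_i$, viewing $V$ as restricted to $B_i$, or rather to the subalgebras it generates) every $y_j$ coming from inside a $B_i$ acts locally nilpotently on $V$, and moreover the ``inner'' maps $\theta_j$ — which by the recursive Ore structure land in commutators or in nil elements built from constituents — act locally nilpotently on $V$ as well.

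With the refined Ore filtration in hand, I would apply Theorem \ref{t.na1}: its hypotheses are exactly that each $A_j'$ is generated by a set $S_j$ with $\theta_j(a)$ locally nilpotent on $V$ for $a\in S_{j-1}$, and each $y_j$ locally nilpotent on $V$ — both of which we just arranged. The conclusion is that $V$ is locally p-finite homogeneous, i.e. strictly triangularizable, and that every simple subquotient has character $\mu$ with $\mu(y_j)=0$ for all $j$; this gives the ``if'' direction and the last sentence. The ``only if'' direction is immediate: if $V$ is strictly triangularizable via a flag $\{V_\alpha\}$ with $A V_\alpha\subseteq V_{\alpha-1}$ along some well-ordering, then in particular each constituent $b$ satisfies $b V_\alpha\subseteq V_{\alpha-1}$, so on any finitely generated submodule $W$ (which meets only finitely many steps of the flag) a long enough product of elements of the constituent kills $W$ — that is, the constituents act locally nilpotently. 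Finally, ``every $A$-module is locally p-finite homogeneous'' follows because the hypothesis ``each constituent acts locally nilpotently'' becomes automatic for an arbitrary module as soon as we know it holds for one faithful one — more directly, for a recursively cyclic-finite algebra the argument above shows strict triangularizability is equivalent to a condition (local nilpotence of constituents) that we must separately verify does hold on every module; the cleanest route is to note that connectedness is preserved under the recursive construction, so $A$ has a unique $1$-dimensional simple and $\mathrm{rad}(A)$-type ideal generated by the constituents, and on any module this ideal acts locally nilpotently by the base-case analysis lifted through the finitely many Ore steps.

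\textbf{Main obstacle.} The delicate point is the inductive propagation in the middle paragraph: verifying that ``constituents act locally nilpotently'' is strong enough to force \emph{all} the refined Ore generators $y_j$ \emph{and} all the maps $\theta_j$ to act locally nilpotently on $V$. This requires carefully tracking how the recursive Ore data of the $B_i$ assemble into Ore data for $A$ — in particular that $\theta_j$ for an ``outer'' step is built from commutators $[y_j,-]$ and bounded-degree expressions in lower constituents — and then invoking the inductive hypothesis on each $B_i$ (restricted suitably) together with a Fitting/local-finiteness argument to pass from local nilpotence of the generators to local nilpotence of the whole generated (non-unital) subalgebra. Once that bookkeeping is done, the rest is a direct appeal to Theorem \ref{t.na1}.
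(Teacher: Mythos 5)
Your reduction to Theorem \ref{t.na1} does not go through, and the point you flag as the ``main obstacle'' is in fact a genuine gap rather than bookkeeping. There are two problems. First, the refinement of the recursive filtration into a single-generator left Ore-solvable filtration is not justified: the condition $B_iA_{i-1}\subseteq A_{i-1}B_i+A_{i-1}$ is a statement about the subalgebra $B_i$ as a whole, and for an individual generator $y$ of $B_i$ it only yields $yA_{i-1}\subseteq A_{i-1}B_i+A_{i-1}$ --- the right-hand side involves the \emph{other} generators of $B_i$, so the intermediate algebra $\langle A_{i-1},y\rangle$ need not satisfy any Ore condition and the maps $\sigma_j,\theta_j$ you need may simply not exist. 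A recursively cyclic-finite algebra (e.g.\ one whose $B_i$ is a noncommutative finite-dimensional local algebra) is not Ore-solvable in general. Second, even where an Ore presentation exists, Theorem \ref{t.na1} requires $\theta_j(a)$ to act locally nilpotently on $V$; nothing in the definition of recursively cyclic-finite, nor in the hypothesis that the constituents act locally nilpotently, forces this --- the Ore condition only places $\theta_j(a)$ in $A_{j-1}'$, an arbitrary element. Your assertion that the $\theta_j$ ``land in commutators or in nil elements built from constituents'' is exactly what would need to be proved, and it is false in general. (A smaller issue: in the ``only if'' direction, a finitely generated submodule of a p-semiartinian module need not meet only finitely many steps of the flag; the correct argument is the well-ordering/topological-nilpotency argument of Theorem \ref{t.sut0}, applied to the constant sequence $b,b,b,\dots$.)

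The paper's proof avoids Theorem \ref{t.na1} entirely and runs through connectedness instead. Identify $A$ with its image in $\End(V)$; the hypothesis makes each constituent a (finitely generated) nil subalgebra, so the image of $A$ is recursively nil and hence connected by Proposition \ref{t.sut} (via Corollary \ref{c.conn}). Local nilpotency of each constituent gives local finiteness of $V$ over each constituent (Lemma \ref{l.Koenig}, or trivially in the finite-dimensional case), and Theorem \ref{t.1st}, applied inductively along the recursive extensions, upgrades this to local finiteness of $V$ over $A$. Finally, since the image of $A$ is finitely generated and connected, Corollary \ref{c.connloc} shows every such locally finite module is locally p-finite homogeneous. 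You should redo the inductive step along these lines: the mechanism is ``extensions of connected algebras are connected'' plus ``locally finite over each piece implies locally finite over the whole,'' not a strict-Ore-datum computation.
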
   

We examine several of the recursive classes introduced, and also note, as a byproduct, the fact that a recursively finitely-generated-nil algebra is nil.


This paper is organized as follows. With a broader audience in mind, in Section 1 we introduce necessary background on semiartinian modules and triangularizability. In Section 2 we introduce Ore-solvable algebras, and prove our main results on triangularizability. In Section 3 we turn our attention to strict upper-triangularizability for Ore-solvable algebras, and in Section 4 we give the above mentioned connections and strict triangularization results for algebras generalizing nil algebras. 


{

\section{Preliminaries}\label{s.1}

\subsection*{Semiartinian modules}

Given an algebra $A$ over a field, recall that an $A$-module $M$ is \textit{semiartinian} (or a \textit{Loewy module}) if every non-zero quotient of $M$ has nonzero simple submodule. Equivalently, there exists a well ordered sequence $(M_{i})_{i\in I}$ of submodules of $M$, such that $M_i \subset M_j$ if $i \leq j$, $M_{i+1}/M_i$ is semisimple for each $i \in I$, $M_{\alpha} = \bigcup\limits_{j < \alpha} M_j$ for each limit ordinal $\alpha \in I$ and
$M = \bigcup\limits_{i \in I} M_i$. This is equivalent to the existence of such a sequence where each $M_{i+1}/M_i$ is simple, and we will call such a sequence a {\it composition series} for $M$.  
Recall that the Loewy series of a module $M$ is defined recursively as follows: $L_0(M)=soc(M)$ (the socle of $M$) and for each ordinal $\alpha$, 

\begin{enumerate}
\item $M_{\alpha}$ is such that $M_{\alpha}/M_{\alpha-1}=soc(M/M_{\alpha-1})$, if $\alpha=\beta+1$ is a successor, and

\item $M_\alpha=\bigcup\limits_{\beta<\alpha}M_\beta$ if $\alpha$ is a limit ordinal.  
\end{enumerate}  


\noindent Then $M$ is semiartinian if and only if the Loewy series of $M$ terminates at $M$, so $M=M_\alpha$ for some ordinal $\alpha$; the smallest such ordinal is called the Loewy length of $M$. It is also known that if $M$ is semiartinian, and $(M_\alpha)_\alpha$ and $(N_\beta)_\beta$ are two composition series, then for each simple $A$-module $S$, the cardinalities of the sets $\{\alpha| M_{\alpha+1}/M_\alpha\}\cong S\}$ and $\{\beta| N_{\beta+1}/N_\beta\}\cong S\}$ are the same. The proof is similar to that in the finite case. More specifically, the Schreier Refinement Theorem (which uses only the Zassenhaus Butterfly Lemma) can be extended to sequences of submodules indexed by well-ordered sets, which allows one to conclude isomorphism of composition series. Details can be found in \cite{Sh1, Sh2} (see also \cite{R}). 

We introduce here the following variations of this notion, which we will see are special types of semiartinian modules closely related to triangularizability. We recall that an $A$-module is locally finite (sometimes called locally finite dimensional) if it is the sum of its finite dimensional submodules.

\begin{definition}
Let $M$ be a semiartinian $A$-module.  
\begin{enumerate}
\item We say that $M$ is f-semiartinian if every simple subquotient of $M$ is finite dimensional. 
\item We say that $M$ is p-semiartinian, or pointed semiartinian, if every simple subquotient of $M$ is 1-dimensional.
\item We say that $M$ is locally p-finite if it is locally finite and every simple subquotient of $M$ is 1-dimensional, equivalently, it is locally finite and p-semiartinian. 
\item We say that $M$ is homogeneous if $M$ has a unique type of simple subquotient $S$ up to isomorphism; that is, there is a simple module $S$ such that every simple subquotient of $M$ is isomorphic to $S$. In this case we will also say that $M$ is $S$-homogeneous.  
\end{enumerate}
\end{definition}

Using the previous remarks, we immediately notice that $M$ is f-semiartinian if and only if it has a composition series with finite dimensional (simple) composition factors, and it is p-semiartinian if and only if it has a composition series with 1-dimensional factors. 
 
Let $\KK$ be a field, $V$ be a vector space, $X\subset \End(V)$ a subset, and let $A$ be the subalgebra of $\End(V)$ generated by $X$. Then $X$ is called \textit{diagonalizable} if there exists a basis of eigenvectors for $X$, equivalently, there exists a basis of eigenvectors for $A$. We say that $X$ is \textit{block diagonalizable}, if there exists a basis $B=\bigcup\limits_iB_i$ partitioned into finite parts $B_i$ such that each $B_i$ spans an $X$-invariant subspace. Again, note that $X$ is block diagonalizable if and only if $A$ is block diagonalizable. In representation theoretic terms, $A=\KK\{X\}\subset \End(V)$ is diagonalizable (resp. block diagonalizable) if and only if $V$ is a direct sum of 1-dimensional (resp. finite dimensional) modules. Recall from \cite{M1} that $X$ is \textit{triangularizable} if there exists a well ordered basis $(B,\leq)$ such that for all $v \in B$ and all $a \in X$, $a(v) \subset \operatorname{Span}_{\KK}\{u \in B : u \leq v \}$. It is easy to note that $X$ is triangularizable if and only if $A$ is triangularizable. $X$ is \textit{strictly triangularizable} if there is such a well ordered basis such that $a(v) \subset \operatorname{Span}_{\KK}\{u : \in B : u < v \}$ for all $a\in X$. 
 We will say that $X$ is \textit{block triangularizable} if there exists a collection of finite sets $B_i\subseteq V$ indexed by a well-ordered set $I$, such that the following hold:  
 \begin{enumerate}
\item The sets $B_i$ are mutually disjoint.
\item The set $B=\bigsqcup\limits_iB_i$ is a basis of $V$.
\item For each $i\in I$ and each $a\in X$, we have $a\cdot B_i\subseteq \sum\limits_{j\leq i}{\rm Span}(B_j)$. 
\end{enumerate}
\noindent Again, $X$ is block triangularizable if and only if $A$ is so. When studying triangularizability of a set $X$ we may thus work with the algebra generated by $X$, and it will be useful to work with the module/representation theoretic  re-interpretation of these notions, which we briefly state here.

\begin{proposition}
Let $A\subseteq \End(V)$ be a subalgebra, as above. Then $A$ is triangularizable if and only if $V$ is a p-semiartinian module, and $A$ is block triangularizable if and only if $V$ is f-semiartinian.
\end{proposition}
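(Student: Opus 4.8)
The plan is to translate each combinatorial condition in the definition of (block) triangularizability directly into a statement about a composition series of $V$ as an $A$-module, using the characterizations recalled just above: that $M$ is $p$-semiartinian iff it has a composition series with $1$-dimensional factors, and $f$-semiartinian iff it has a composition series with finite-dimensional simple factors. Since $A$ is the subalgebra of $\End(V)$ generated by $X$, an $A$-submodule of $V$ is the same thing as an $X$-invariant subspace, so throughout we may freely pass between the two languages.

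For the first equivalence, suppose $A$ is triangularizable, witnessed by a well-ordered basis $(B,\le)$. For each $v\in B$ set $V_v = \operatorname{Span}_{\KK}\{u\in B: u\le v\}$ and $V_{<v}=\operatorname{Span}_{\KK}\{u\in B: u<v\}$. The triangularity condition $a(v)\in V_v$ for all $a\in X$ says exactly that each $V_v$ (and each $V_{<v}$, taking unions at the position just below $v$) is $X$-invariant, hence an $A$-submodule; the family $\{V_{<v}\}\cup\{V_v\}$, indexed by $B$ together with its limit positions, is a well-ordered ascending chain of submodules with union $V$, and each successive quotient $V_v/V_{<v}$ is $1$-dimensional, spanned by the image of $v$. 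At limit positions one takes unions, and one checks the Loewy-type closure condition holds by construction. This exhibits a composition series of $V$ with $1$-dimensional factors, so $V$ is $p$-semiartinian. Conversely, given a composition series $(M_\alpha)_\alpha$ of $V$ with $1$-dimensional factors $M_{\alpha+1}/M_\alpha$, pick for each successor $\alpha+1$ a vector $v_\alpha\in M_{\alpha+1}\setminus M_\alpha$; the set $B=\{v_\alpha\}$ is a basis of $V$ (this uses that the series is continuous at limits and exhausts $V$, a transfinite induction), well-ordered by the index $\alpha$, and since $M_{\alpha+1}=M_\alpha + \KK v_\alpha$ is $A$-invariant we get $a(v_\alpha)\in M_{\alpha+1}=\operatorname{Span}\{v_\beta:\beta\le\alpha\}$ for all $a\in A$, i.e. the basis is triangular.

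The block case is entirely parallel: if $A$ is block triangularizable with finite sets $B_i$ indexed by a well-ordered $I$, set $V_i=\sum_{j\le i}\operatorname{Span}(B_j)$ and $V_{<i}=\sum_{j<i}\operatorname{Span}(B_j)$; condition (3) makes each $V_i$ and $V_{<i}$ an $A$-submodule, the chain is continuous and exhausts $V$, and $V_i/V_{<i}$ is finite-dimensional (spanned by the image of $B_i$), hence a finite-dimensional — not necessarily simple — module, which we may refine by a finite composition series to get finite-dimensional simple factors. So $V$ is $f$-semiartinian. Conversely, starting from a composition series of $V$ with finite-dimensional simple factors, group consecutive factors so as to realize $V$ as a continuous well-ordered union of submodules $V_i$ with each $V_i/V_{<i}$ finite-dimensional, choose a basis $B_i$ of a complement of $V_{<i}$ in $V_i$, and observe that $A$-invariance of $V_i$ gives $a\cdot B_i\subseteq V_i=\sum_{j\le i}\operatorname{Span}(B_j)$.

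The only genuinely delicate point — and the one I would write out carefully — is the transfinite bookkeeping: verifying that the vectors (resp. finite sets) chosen from the successive quotients really do assemble into a basis of all of $V$, with the correct continuity at limit ordinals, and conversely that the submodule chains built from a basis are closed under the required unions. This is a routine but non-trivial induction on ordinals, of exactly the same flavor as the proof that a module with a composition series has a basis adapted to it; everything else is a direct unwinding of definitions together with the remark that a well-ordered basis and a continuous well-ordered chain of submodules with one-dimensional (resp. finite-dimensional) successive quotients are two descriptions of the same data.
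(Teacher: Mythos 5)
Your argument is correct, but it takes a more self-contained route than the paper. The paper's proof is essentially a citation: it invokes Proposition 8 of \cite{M2}, which says that $X$ is triangularizable if and only if there is a maximal well-ordered chain of $X$-invariant subspaces of $V$, and then merely observes that maximality forces the successive quotients to be $1$-dimensional, which is the definition of p-semiartinian (with the block case handled \emph{mutatis mutandis}). You instead inline the content of that cited result: you pass directly from the well-ordered basis to the chain $V_{<v}\subseteq V_v$ of submodules and back, carrying out the transfinite bookkeeping (continuity at limit positions, assembling the chosen coset representatives $v_\alpha$ into a basis) yourself. The two approaches buy different things: the paper's version is shorter and delegates the ordinal combinatorics to \cite{M2}, while yours makes the proposition independent of that reference and makes explicit exactly where each hypothesis (invariance, well-ordering, finiteness of the blocks $B_i$) is used. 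One small simplification you could make: in the converse direction of the block case there is no need to ``group consecutive factors,'' since a composition series of an f-semiartinian module already has finite-dimensional simple factors $M_{\alpha+1}/M_\alpha$, and lifting a basis of each factor to a finite set $B_\alpha\subseteq M_{\alpha+1}$ gives the required block-triangular data directly; conversely, your refinement of the finite-dimensional quotients $V_i/V_{<i}$ into simple factors, combined with the Jordan--H\"older-type uniqueness of composition factors recalled before the proposition, is exactly what is needed to conclude f-semiartinianity as the paper defines it.
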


\begin{proof}
\cite[Proposition 8]{M2} 
states that an algebra generated by a set $X \subseteq \End(V)$ is triangularizable if and only if there is a well-ordered collection $(V_i)_i\in I$ of $X$-invariant subspaces of $V$, which is maximal as a well-ordered set of subspaces of $V$. This means $V_{i+1}/V_i$ is 1-dimensional for each $i\in I$, and this is equivalent to $V$ being p-semiartinian as module over the algebra $A = \KK\{X\}$ generated by $X$. The second assertion is proved similarly, and may obtained by applying the same argument as the aforementioned proposition from \cite{M2}, \textit{mutatis mutandis}.
\end{proof}

 We also note the module theoretic interpretation of strict triangularization. 

\begin{proposition}\label{p.strictTR}
Let $X\subseteq \End(V)$ and let $A$ be the subalgebra of $\End(V)$ generated by $X$. Then $X$ is strictly triangularizable if and only if $V$ is a homogeneous p-semiartinian $A$-module, such that its unique (up to isomorphism) simple submodule $S\cong \KK w$ has the property that $a\cdot w=0$ for all $a\in X$. 
\end{proposition}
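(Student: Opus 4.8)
The plan is to reduce both sides of the equivalence to statements about composition series and then observe that those statements coincide. Recall from the discussion following the Definition that an $A$-module is $p$-semiartinian exactly when it has a composition series with $1$-dimensional factors, and that ``$S$-homogeneous'' means precisely that, in such a series, every factor is isomorphic to the fixed simple module $S$. The bridge between well-ordered triangularizing bases and composition series is elementary: given a well-ordered basis $(B,\le)$, identify $(B,\le)$ with an ordinal $\lambda$ via its unique order isomorphism, write $B=\{v_\alpha\}_{\alpha<\lambda}$, and set $W_\alpha=\operatorname{Span}_{\KK}\{v_\beta:\beta<\alpha\}$ for $\alpha\le\lambda$; then $(W_\alpha)_{\alpha\le\lambda}$ is an ascending chain of subspaces with $W_0=0$, $W_\lambda=V$, $W_\alpha=\bigcup_{\beta<\alpha}W_\beta$ at limit $\alpha$, and $\dim_{\KK}(W_{\alpha+1}/W_\alpha)=1$. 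Conversely, out of a composition series $(V_\alpha)_{\alpha\le\lambda}$ with $1$-dimensional factors one may choose coset representatives $v_\alpha\in V_{\alpha+1}\setminus V_\alpha$, and a routine transfinite induction (for the spans, plus a descending argument on the largest index for linear independence) shows that $B=\{v_\alpha\}_{\alpha<\lambda}$, ordered by the index, is a basis of $V$ with $\operatorname{Span}_{\KK}\{u\in B:u<v_\alpha\}=V_\alpha$ for every $\alpha$.

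For the forward implication, assume $(B,\le)$ strictly triangularizes $X$ and form the $W_\alpha$ as above. Strictness gives $a(W_{\alpha+1})\subseteq W_\alpha$ for every $a\in X$ and every $\alpha$ (if $u\le v_\alpha$ and $t<u$ then $t<v_\alpha$), whence each $W_\alpha$ is $X$-invariant, hence $A$-invariant, and $X$ acts as zero on each factor $W_{\alpha+1}/W_\alpha$. A $1$-dimensional $A$-module on which $X$ acts by zero is unique up to isomorphism — it is $\KK w$ with $w$ killed by $X$ — so $(W_\alpha)_{\alpha\le\lambda}$ is a composition series all of whose factors are isomorphic to this fixed module $S=\KK w$; thus $V$ is homogeneous $p$-semiartinian. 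Finally, since $V$ is semiartinian, any $V\ne 0$ has a nonzero simple submodule (for instance $\KK v_0$, where $v_0$ is the least element of $B$, on which $X$ acts by zero by strictness); every simple submodule is a simple subquotient, hence isomorphic to $S$, and $X$ annihilates its generator, as required.

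For the converse, assume $V$ is $S$-homogeneous $p$-semiartinian with $S=\KK w$ one-dimensional and $aw=0$ for all $a\in X$. Choose a composition series $(V_\alpha)_{\alpha\le\lambda}$ with $1$-dimensional factors. By homogeneity every factor is isomorphic to $S$, so $X$ annihilates each $V_{\alpha+1}/V_\alpha$, i.e.\ $a(V_{\alpha+1})\subseteq V_\alpha$ for all $a\in X$. Picking representatives $v_\alpha\in V_{\alpha+1}\setminus V_\alpha$ and invoking the bridge observation, $B=\{v_\alpha\}_{\alpha<\lambda}$ is a well-ordered basis with $\operatorname{Span}_{\KK}\{u\in B:u<v_\alpha\}=V_\alpha$, and then $a(v_\alpha)\in V_\alpha=\operatorname{Span}_{\KK}\{u\in B:u<v_\alpha\}$ for every $a\in X$, so $B$ strictly triangularizes $X$.

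No step here is deep; the only thing requiring genuine care is the transfinite bookkeeping in the bridge observation — in particular checking that coset representatives of a composition series with $1$-dimensional factors really do assemble into a basis — together with the harmless degenerate case $V=0$, where both conditions hold vacuously. One should also record at the outset that the argument is insensitive to whether $A$ is taken unital: if $1\in A$ it acts as the identity on every $1$-dimensional quotient, while every product of elements of $X$ acts as zero there, so in either convention the composition factors are exactly the $1$-dimensional module on which $X$ acts by zero.
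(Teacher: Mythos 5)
Your proof is correct and follows essentially the same route as the paper: both translate a strictly triangularizing well-ordered basis into a composition series with $1$-dimensional factors killed by $X$, and back. The only difference is one of presentation — the paper delegates the basis--flag correspondence to the cited result of Mesyan, whereas you verify the transfinite bookkeeping directly, which makes your argument self-contained but not substantively different.
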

\begin{proof}
$X$ is strictly triangularizable if and only if there is a well-ordered collection $(V_i)_i\in I$ of $X$-invariant subspaces of $V$, which is maximal as a well-ordered set of subspaces of $V$, and such that $X\cdot V_{i+1}\subset V_i$. This is equivalent to asking that $(V_i)_i$ is a composition series (in the infinite sense defined above) and each simple $V_{i+1}/V_i$ is spanned by a vector $\overline{v_{i+1}}$ such that $a\cdot \overline{v_{i+1}}=0$ for all $a\in X$. This means all these simple modules are isomorphic as $A=\KK\{X\}$-modules ($A$ is the subalgebra of $\End(V)$ generated by $X$) and are described precisely by the property in the statement of the proposition.
\end{proof}

\subsection*{Triangularizable modules}

By a slight extension of the previous terminology, we may consider an arbitrary algebra $A$ and $V$ an $A$-module, and consider the algebra $B=A/{\rm ann}_A(V)$ as a subalgebra of $\End(V)$. We will then say that $V$ is a triangularizable/block triangularizable $A$-module if $B$ is triangularizable; this is equivalent to saying that $V$ is p-semiartinian/f-semiartinian. The alternate ``triangularizable" terminology simply emphasizes the connection to this notion. It will be of use as we will also take the alternate viewpoint of starting with a certain algebra possibly given by generators and relations, and determining what modules have such triangularization properties.


\section{Ore-Solvable Algebras and Triangularizability} 

\subsection{Ore-Solvable Algebras}

We first make the following observation, which will motivate the formulation of our results. Recall that a module $V$ is \textit{locally of finite length} if it is the sum of its finite length submodules. Note that every module which is locally of finite length is semiartinian (since each module of finite length is semiartinian and the class of semiartinian modules is closed under subquotients and direct sums).

\begin{proposition}
Let $R$ be a Noetherian subalgebra of $\End(V)$ for a vector space $V$. Then $V$ is a semiartinian $R$-module if and only if $V$ is locally of finite length. In particular, $R$ is block triangularizable if and only if $V$ locally finite, and $R$ is triangularizable if and only if $V$ is locally p-finite.
\end{proposition}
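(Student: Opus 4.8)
The plan is to isolate a single lemma — \emph{a Noetherian semiartinian module has finite length} — from which the whole statement drops out. One implication needs nothing new: it is already observed above that any module which is locally of finite length is semiartinian, so for the first assertion I only need the converse.

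So suppose $V$ is semiartinian, and first I would prove the lemma. Let $M$ be Noetherian and semiartinian, with Loewy series $L_0 \subseteq L_1 \subseteq \cdots$ (so $L_{\alpha+1}/L_{\alpha} = \mathrm{soc}(M/L_{\alpha})$). Since $M$ is Noetherian this ascending chain stabilizes at some finite stage $L_n$; then $\mathrm{soc}(M/L_n) = L_{n+1}/L_n = 0$, and as $M/L_n$ is a quotient of the semiartinian module $M$ it must be zero, whence $M = L_n$. Each layer $L_{i+1}/L_i$ is a semisimple Noetherian module (being a subquotient of $M$), hence a finite direct sum of simples, hence of finite length; therefore $M$ has finite length. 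Applying this: for $v \in V$, the finitely generated submodule it generates is Noetherian (because $R$ is Noetherian) and semiartinian (being a submodule of the semiartinian module $V$), hence of finite length; thus $V$ is a sum of finite length submodules, i.e.\ locally of finite length.

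For the ``in particular'' clauses I would invoke the earlier proposition identifying ``$R$ block triangularizable'' with ``$V$ is $f$-semiartinian'' and ``$R$ triangularizable'' with ``$V$ is $p$-semiartinian''. If $V$ is $f$-semiartinian it is semiartinian, hence locally of finite length by the above; since every simple subquotient of $V$ is finite dimensional, every finite length submodule of $V$ is finite dimensional, so $V$ is locally finite. Conversely, if $V$ is locally finite it is a sum of finite dimensional — hence finite length — submodules, so it is semiartinian; and any simple subquotient $N/N'$ of $V$ meets a finite dimensional submodule $W$ nontrivially, whence $N/N' \cong W/(W \cap N')$ is finite dimensional, so $V$ is $f$-semiartinian. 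Replacing ``finite dimensional'' by ``$1$-dimensional'' throughout gives the triangularizable / locally $p$-finite equivalence verbatim.

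I expect no genuine obstacle here: the only content is the lemma, and its proof is a short use of the ascending chain condition against the definition of the Loewy series, together with the fact that a semisimple Noetherian module has finite length. The one point deserving a word of care is whether $R$ is assumed unital, so that $v$ lies in the cyclic submodule it generates; if one does not assume this, replace $Rv$ by $Rv + \KK v$, which is still finitely generated, and nothing else in the argument changes.
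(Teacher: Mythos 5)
Your proposal is correct and follows essentially the same route as the paper: the paper's proof also reduces to showing that the cyclic submodule $Rx$ is Noetherian and semiartinian, hence has a finite Loewy series with finite-length semisimple layers and therefore finite length. You merely spell out a few steps the paper leaves implicit (why the Loewy series terminates, and the verification of the ``in particular'' clauses), so there is no substantive difference.
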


\begin{proof}
We only need to prove the ``only if'' part. Suppose $V$ is a semiartinian $R$-module, and let $x \in V$. Then $M = Rx$ is a finitely generated, and thus Noetherian, $R$-module. Since $M$ is Noetherian and semiartinian, it is in fact Artinian of finite length: indeed, its Loewy series is finite $L_0\subset \dots \subset L_n=A$ and each semisimple $L_i/L_{i-1}$ has finite length, since it is also Noetherian. The conclusion now follows. 
\end{proof}

The above shows that to investigate triangularizability or block-triangularizability for (modules over) algebras that are Noetherian, one can equivalently study locally finite modules. Thus, the situation where $V$ is not just semiartinian but locally finite can be regarded as a special type of triangularization which is of particular interest. For example, the case of finitely-many commuting operators $T_i$ on a vector space $V$ falls under this set-up of Noetherian subalgebras of $\End(V)$. That is, a finite family of commuting operators $\{T_i\}_{i=1,\dots,n} \subset \End(V)$ is simultaneously triangularizable if and only if $V$ is a locally p-finite module. As we will note, many other situations will also be of this type.   

We first introduce a definition. This definition can be carried over without change for algebras over a commutative ring, but for our purposes we restrict attention to algebras over a field.

\begin{definition}\label{d.solvable}
Let $A$ be a $\KK$-algebra. We say that $A$ is solvable if there exists a sequence $\KK=A_0\subset A_1\subset \dots \subset A_n=A$ of subalgebras and commutative subalgebras $B_i\subset A$ such that $A_{i-1}B_i+A_{i-1}=B_iA_{i-1}+A_{i-1}=A_{i}$ for all $i$.
\end{definition}


Our first result reduces block-triangularization of a solvable algebra to checking it for its building pieces $B_i$.

\begin{theorem}\label{t.1st}
Let $A$ be a not necessarily unital algebra, and assume that there exists a sequence $\KK=A_0\subset A_1\subset \dots \subset A_n=A$ of subalgebras, as well as subalgebras $B_i\subset A$ such that the following conditions hold: 
\begin{enumerate}
 \item $A_{i-1}B_i \subseteq B_iA_{i-1}+A_{i-1}$, and  
 \item $A_i$ is generated by $A_{i-1}$ and $B_i$ for all $i$. 
 \end{enumerate}
\noindent Then a left or right $A$-module $V$ is locally finite if and only if $V$ is a locally finite $B_i$-module for all $i$. 
\end{theorem}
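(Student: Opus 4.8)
The plan is to prove the statement by induction on $n$, the length of the defining sequence. The base case $n=0$ is trivial (then $A=\KK=B_1$ in effect, or the statement is vacuous). For the inductive step, the key reduction is to the case $n=1$: if we can show that an $A_1$-module $V$ is locally finite over $A_1$ if and only if it is locally finite over $B_1$, then for general $n$ we apply the inductive hypothesis to the chain $\KK = A_0 \subset \cdots \subset A_{n-1}$ (which still satisfies conditions (1) and (2)) to reduce local finiteness over $A_{n-1}$ to local finiteness over $B_1, \dots, B_{n-1}$, and then handle the top step $A_{n-1} \subset A_n$ separately using the same $n=1$-type argument with $A_{n-1}$ playing the role of the base ring. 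Note the "only if" direction is immediate in all cases, since a finite-dimensional $A$-submodule is in particular a finite-dimensional $B_i$-submodule; so the content is the "if" direction.

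So the heart of the matter is: given conditions (1) $A_0' B \subseteq B A_0' + A_0'$ and (2) $A'$ is generated by $A_0'$ and $B$, where $A'$ is locally finite as an $A_0'$-module (by induction) and $V$ is locally finite as a $B$-module, show $V$ is locally finite as an $A'$-module. Fix $v \in V$; I want to find a finite-dimensional $A'$-submodule containing $v$. First, $Bv$ is finite-dimensional since $V$ is locally finite over $B$; enlarge to the finite-dimensional $B$-submodule $W_0 := Bv + \KK v$. The idea is then to iteratively apply $A_0'$ and $B$ and show the process stabilizes. The crucial structural point is condition (1), rewritten as $A_0' B \subseteq B A_0' + A_0'$: this lets us push all the $A_0'$'s to act first and the $B$'s to act last (up to lower-order terms), so that a typical element of $A'$ acting on $v$ can be rewritten, modulo terms already accounted for, in the form $(\text{element of } B)\cdot(\text{element of } A_0')\cdot v$. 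Since $A_0' v$ — more precisely the $A_0'$-submodule generated by $W_0$ — need not be finite-dimensional a priori, this is where I must invoke that $A_0'$ is \emph{block triangularizable} relative to its own smaller filtration... but actually I only have local finiteness of $V$ over each $B_i$, not over $A_0'$ directly.

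This is exactly the main obstacle: closing the loop between "locally finite over $B$" and "locally finite over $A_0'$" when the two interact via the commutation relation (1). The resolution I anticipate is to build the desired finite-dimensional $A'$-submodule as an increasing union that must terminate by a dimension-counting / Noetherian-type argument: start with $W_0$, and at each stage form $W_{k+1} := A_0'\text{-submodule generated by } W_k$, then $W_{k+1}' := B W_{k+1} + W_{k+1}$; one shows using (1) that if $W_k$ is $B$-stable and finite-dimensional then $A_0' W_k$ stays finite-dimensional (here invoking the inductive hypothesis on $A_0'$, applied to the finite-dimensional hence locally finite $B_1, \dots, B_{n-1}$-module $W_k$ — so $W_k$ generates a locally finite, i.e. here finite-dimensional, $A_0'$-submodule), and similarly $B$ applied to a finite-dimensional space stays finite-dimensional. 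The relation (1) guarantees that once we have a subspace closed under both $A_0'$ and $B$, it is closed under all of $A'$ by condition (2). Thus the union $\bigcup_k W_k$ stabilizes after finitely many steps precisely because each $W_k \mapsto W_{k+1}$ is "generate a finite-dimensional $A_0'$-submodule then apply $B$ once," and one must verify this eventually stops growing — the cleanest way is to note that $A_0' W_k$ is finite-dimensional and $B$-invariant modulo $A_0'$ by (1), so in fact a single well-chosen closure step suffices: let $W$ be the $A_0'$-submodule generated by $W_0$ (finite-dimensional by induction), then $BW + W$ is finite-dimensional, and by (1) it is again $A_0'$-stable, hence an $A'$-submodule by (2), and it contains $v$. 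I expect verifying the finite-dimensionality of $BW + W$ and its $A_0'$-stability via a careful application of (1) — pushing $A_0' B W \subseteq BA_0'W + A_0'W \subseteq BW + W$ — to be the one genuinely delicate computation.
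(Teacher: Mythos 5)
Your final one-step closure argument is correct and is essentially the paper's proof: the paper likewise observes that the hypotheses force $A_n = B_nA_{n-1}+A_{n-1}$, so that $A_nx = B_n(A_{n-1}x)+A_{n-1}x$ is finite dimensional once the inductive hypothesis makes $A_{n-1}x$ finite dimensional and local $B_n$-finiteness makes $B_n(A_{n-1}x)$ finite dimensional (your preliminary musings about iterating $W_k$ are unnecessary, as you yourself conclude). The only piece missing is the right-module half of the statement, where the one-sided hypothesis forces the mirror-image order of operations, $xA_n = (xB_n)A_{n-1}+xA_{n-1}$: one first forms the finite-dimensional space $xB_n+\KK x$ and then applies the inductive hypothesis over $A_{n-1}$ to each of its finitely many spanning vectors.
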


\begin{proof}
Note first that the hypotheses imply that $B_iA_{i-1}+A_{i-1}=A_{i}$ for all $i$. It is clear that if a module is locally finite as an $A$-module, then it is locally finite as a $B_i$-module for all $i$. Conversely, we proceed by induction on the length $n$ of such a chain of subalgebras. The case $n=1$ is trivial. For the induction step, let $x\in V$. If $V$ is a left module, then $Ax=A_nx=B_nA_{n-1}x+A_{n-1}x$; but $A_{n-1}x$ is finite dimensional, and therefore $B_nA_{n-1}x$ is finite dimensional by hypothesis, and thus $\dim(Ax)<\infty$. Similarly, if $V$ is a right $A$-module, $xA=xB_nA_{n-1}+xA_{n-1}$; then $xB_n$ is finite dimensional, and the induction hypothesis implies $xB_nA_{n-1}$ and $xA_{n-1}$ are both finite dimensional.
\end{proof}

We will often be interested in the situation where the commutative subalgebras $B_i$ are finitely generated, or even generated by a single element. In this situation the previous theorem may be rephrased as below.

\begin{corollary}\label{c.locfincond} Let $A$ be an algebra. Then the following statements hold: 
\begin{enumerate}
\item[(i)] Suppose that $A$ is commutative an has a finite generating system $a_1,\dots,a_n$. Then a module $V$ is locally finite if and only if each $a_i$ acts locally finite on $V$. 
\item[(ii)] Suppose that $A$ is a solvable algebra, such that each of the algebras $B_i$ are finitely generated by some elements $x_{ij}$ (thus $A$ is finitely generated by the finite collection $x_{ij}$). 
Then an $A$-module $V$ is locally finite if and only if each $x_{ij}$ acts locally finite on $V$.  
\end{enumerate}
\end{corollary}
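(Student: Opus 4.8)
The plan is to derive both statements from Theorem \ref{t.1st} by choosing the building blocks appropriately. For part (i), a finitely generated commutative algebra $A = \KK[a_1,\dots,a_n]$ fits the hypotheses of Theorem \ref{t.1st} with the filtration $A_0 = \KK \subset A_1 = \KK[a_1] \subset A_2 = \KK[a_1,a_2] \subset \cdots \subset A_n = A$ and with $B_i = \KK[a_i]$: each $A_i$ is generated by $A_{i-1}$ and $B_i$, and since everything is commutative, condition (1) of the theorem, $A_{i-1}B_i \subseteq B_iA_{i-1} + A_{i-1}$, holds trivially (indeed $A_{i-1}B_i = B_iA_{i-1}$). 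So by Theorem \ref{t.1st}, $V$ is locally finite as an $A$-module if and only if it is locally finite as a $B_i = \KK[a_i]$-module for each $i$. Thus it remains to observe that $V$ is locally finite over $\KK[a_i]$ precisely when $a_i$ acts locally finitely on $V$: if $a_i$ acts locally finitely, then for $x \in V$ the space $\KK[a_i]x = \operatorname{Span}_\KK\{x, a_ix, a_i^2x,\dots\}$ is finite dimensional by definition of acting locally finitely, so $\KK[a_i]x$ is a finite dimensional submodule containing $x$; conversely, if $V$ is locally finite over $\KK[a_i]$, then each $x$ lies in a finite dimensional $\KK[a_i]$-submodule, which is in particular $a_i$-invariant and finite dimensional, so $a_i$ acts locally finitely.

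For part (ii), suppose $A$ is solvable in the sense of Definition \ref{d.solvable}, with filtration $\KK = A_0 \subset A_1 \subset \cdots \subset A_n = A$ and commutative subalgebras $B_i$ with $A_{i-1}B_i + A_{i-1} = B_iA_{i-1} + A_{i-1} = A_i$, and assume each $B_i$ is finitely generated, say by $x_{i1},\dots,x_{im_i}$. The solvability hypothesis in particular gives $A_{i-1}B_i \subseteq B_iA_{i-1} + A_{i-1}$ and that $A_i$ is generated by $A_{i-1}$ and $B_i$, so Theorem \ref{t.1st} applies and tells us $V$ is locally finite as an $A$-module if and only if it is locally finite as a $B_i$-module for each $i$. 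Now each $B_i$ is itself a finitely generated commutative algebra, so part (i) applies to $B_i$: $V$ is locally finite over $B_i$ if and only if each generator $x_{ij}$ acts locally finitely on $V$. Combining these two equivalences over all $i$ yields the claim: $V$ is locally finite over $A$ if and only if every $x_{ij}$ acts locally finitely on $V$.

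The argument is essentially routine assembly, so I do not expect a serious obstacle; the only point requiring a little care is making explicit the elementary equivalence ``$a$ acts locally finitely on $V$'' $\iff$ ``$V$ is locally finite as a $\KK[a]$-module,'' which underlies the reduction from the subalgebra-theoretic statement of Theorem \ref{t.1st} to the operator-theoretic statement of the corollary. One should also note that in part (ii) the finitely many operators $x_{ij}$ indeed generate $A$ as an algebra (as remarked parenthetically in the statement), since the $B_i$ generate the successive quotients and together generate $A$; this is what makes the condition ``$A$ finitely generated'' consistent with the hypotheses.
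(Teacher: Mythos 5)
Your proposal is correct and follows essentially the same route as the paper: view the finitely generated commutative algebra as a solvable algebra with $B_i=\KK[a_i]$ and $A_i=\KK[a_1,\dots,a_i]$, apply Theorem \ref{t.1st}, and deduce (ii) from (i) together with another application of that theorem. The only difference is that you spell out the elementary equivalence between ``$a_i$ acts locally finitely'' and ``$V$ is locally finite over $\KK[a_i]$,'' which the paper leaves implicit.
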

\begin{proof}
For the first statement, one may see the commutative algebra $A$ as a solvable algebra with $B_i=\KK[x_i]$ and $A_i=\KK[x_1,\dots,x_i]$, and apply the previous Theorem. (Note that any commutative algebra is always trivially solvable); the second statement follows from the first and the previous Theorem.
\end{proof}

\noindent For our next results, we introduce another definition. 

\begin{definition}
Let $A$ be a (possibly non-unital) $\KK$-algebra. We say that $R$ is left (respectively, right) Ore-solvable if there is a sequence of subalgebras $\KK=A_0\subset A_1\subset \dots\subset A_n=A$ and cyclic subalgebras $B_i=\KK[x_i]$ 
for some $x_i\in A_i$, such that $A_i$ is generated by $A_{i-1}$ and $x_i$ and $A_{i-1}x_i\subseteq x_iA_{i-1}+A_{i-1}$ (respectively, $x_iA_{i-1} \subseteq A_{i-1}x_i+A_{i-1}$) for each $i$. We say that the extension is Ore-solvable if there exist such a sequence of subalgebras such that both of these containments hold.

\end{definition}

\noindent That is, an Ore-solvable algebra is a solvable algebra whose successive algebras $B_i$ are generated by just one element, with a ``global commuting" condition on $x_i$ and $A_i$. It follows immediately from Corollary \ref{c.locfincond} that if $A$ is Ore-solvable and $V$ is an $A$-module, then $V$ is locally finite if and only if each $x_i$ acts locally finite on $V$.


We note that the above conditions in the definition of an Ore-solvable algebra imply and require the existence of maps $\sigma_i,\theta_i:A_{i-1}\rightarrow A_{i-1}$ such that $ax=x\sigma_i(a)+\theta_i(a)$ for all $a\in A_{i-1}$, as well as $\sigma_i',\theta_i':A_{i-1}\rightarrow A_{i-1}$ with $xa=\sigma_i'(a)x+\theta_i'(a)$. In the classical case of Ore extensions, of course, $\sigma_i,\sigma_i'$ are automorphisms of the algebra $A_{i-1}$ (sometimes just algebra endomorphisms), the $\theta_i,\theta_i'$ are ($\sigma_i$-${\rm id}$)-skew derivations, and $A_i=\bigoplus\limits_nA_{i-1}x_i^n$ is a free module. We do not require any such module property of $A_i$ over $A_{i-1}$. We will also say that $A$ is an {\it Ore-solvable algebra with Ore-datum $(x_i,\sigma_i,\sigma_i',\theta_i,\theta_i')$}. Evidently this Ore-datum may not be uniquely determined, and the same algebra structure can be endowed with structure of an Ore-solvable algebra by different sets of Ore data. Throughout, whenever we refer to an Ore-solvable algebra, we will implicitly assume and use the above notations.

\begin{example}\label{e.solvgp}
Let $G$ be a polycyclic group and consider a series $1 = G_0 \triangleleft G_1 \triangleleft \ldots \triangleleft G_n = G$ such that $G_i/G_{i-1}$ is cyclic for each $i$, say generated by (the image of) some $x_i \in G_i$. Let $A = \KK G$ be the group algebra, and for each $i$ set $A_i = \KK G_i$ and $B_i = \KK[x_i]$.  Then clearly we have a filtration $\KK = A_0 \subset A_1 \subset \ldots \subset A_n = A$. It is easy to see that $A$ is Ore-solvable, since if $g\in G_{i-1}$, then $x_ig=(x_i g x_i^{-1}) x_i+0$ and the Ore datum is given by $\sigma_i'(g)=x_i g x_i^{-1}\in G_{i-1}$ for any $g \in G_{i-1}$ ($\theta_i$ is simply the zero map). In particular, this applies for any finite solvable group. 
\end{example}

\begin{example}\label{e.solvlie} 
Let $\mathbb{K}$ be a field of characteristic 0 and $\mathfrak{g}$ be a solvable Lie algebra of dimension $n$ with elementary sequence $\mathfrak{g} = \mathfrak{g}_n \supset \mathfrak{g}_{n-1} \supset \ldots \supset \mathfrak{g}_1 \supset \mathfrak{g}_0 = 0$. Then each $\mathfrak{g}_i/\mathfrak{g}_{i-1}$ is one-dimensional, say with $x_i \in \mathfrak{g}_i$ inducing a basis for $\mathfrak{g}_i/\mathfrak{g}_{i-1}$.  For each $i$, let $A_i = U(\mathfrak{g}_i)$ denote the universal enveloping algebra of $\mathfrak{g}_i$ and  $B_i = \mathbb{K}[x_i]$. Then 
for any $y \in \mathfrak{g}_{i-1}$, the relation $x_i y = yx_i + [x_i,y]$ holds (and note that $[x_i,y] \in \mathfrak{g}_{i-1}$).  This shows that $A$ is Ore-solvable with Ore-datum  $\sigma_i'(y) = y$ and $\theta_i'(y) = [x_i,y]$.
\end{example}

The following will be relevant for our results. It is very similar to the well-known fact that iterated Ore extensions are Noetherian, and it has weaker versions for left/right Ore solvable algebras. A proof follows directly from Theorem 2.8.1 of \cite{B}. 
\begin{theorem}
Let $A$ be an Ore solvable algebra. Then $A$ is Noetherian.
\end{theorem}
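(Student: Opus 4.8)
The plan is to reduce the statement to the classical fact that iterated Ore extensions of Noetherian rings are Noetherian, using the fact that in an Ore-solvable algebra each $A_i$ is a quotient of a genuine Ore extension of $A_{i-1}$. First I would set up the induction on the length $n$ of the filtration $\KK = A_0 \subset A_1 \subset \dots \subset A_n = A$. The base case $A_0 = \KK$ is trivially Noetherian. For the inductive step, assume $A_{i-1}$ is Noetherian; I want to conclude $A_i$ is Noetherian.

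The key observation is that the maps $\sigma_i, \theta_i : A_{i-1} \to A_{i-1}$ (from the relation $ax_i = x_i\sigma_i(a) + \theta_i(a)$, or their right-handed analogues) need not a priori be an endomorphism and a skew derivation, but the existence of the relation $A_{i-1}x_i \subseteq x_iA_{i-1} + A_{i-1}$ (together with $A_i$ being generated by $A_{i-1}$ and $x_i$) means $A_i = \sum_{k\geq 0} A_{i-1}x_i^k$ is generated as a left $A_{i-1}$-module by the powers of $x_i$. One then forms the formal (left) Ore extension $\widetilde{A_i}$ built from $A_{i-1}$ using an honest endomorphism and skew derivation that lift the data — more precisely, one invokes Theorem 2.8.1 of \cite{B}, which is the general principle that a ring which is a sum $\sum_k R\,x^k$ with a suitable commutation relation controlling $xR$ is Noetherian whenever $R$ is. The cleanest route is: $A_i$ is a homomorphic image of an iterated Ore extension $A_{i-1}[x_i;\sigma_i,\theta_i]$ (in whichever one-sided variant is available, left or right), via the evident surjection sending the polynomial variable to $x_i$; since Ore extensions of Noetherian rings over a suitable base are Noetherian (left Noetherian if $\sigma_i$ is an endomorphism, etc.), and quotients of Noetherian rings are Noetherian, $A_i$ is Noetherian. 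Since $A = A_n$, this finishes the induction.

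I expect the main obstacle to be the bookkeeping around the one-sided nature of the hypotheses and the fact that $\sigma_i$ need not be injective or surjective: the classical statement ``$R[x;\sigma,\delta]$ is Noetherian when $R$ is'' is usually proved assuming $\sigma$ is an automorphism, and weakening this to the endomorphism case (and to the genuinely one-sided Ore-solvable case, where we only control $A_{i-1}x_i$ and not $x_iA_{i-1}$) is exactly what the cited Theorem 2.8.1 of \cite{B} is being invoked for. So the real content of the argument is not a computation but the correct citation and the verification that the hypotheses of that theorem are met at each stage of the filtration — in particular that $A_i$ is finitely generated as a one-sided module over $A_{i-1}[x_i]$-type data, which follows from condition ($*$). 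Once that is in place, the induction is immediate, and I would not belabor the routine verification that quotients and the relevant module-finiteness conditions propagate.

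Given that the paper explicitly says ``A proof follows directly from Theorem 2.8.1 of \cite{B}'', I would keep the written proof short: state the induction, point out that $A_i$ arises as a quotient of an Ore extension of the Noetherian ring $A_{i-1}$, cite \cite{B} for the Noetherianity of that Ore extension (in the appropriate left/right version according to which containment in ($*$) holds), and note that quotients of Noetherian rings are Noetherian. I would remark that the left Ore-solvable hypothesis yields left Noetherian, the right Ore-solvable hypothesis yields right Noetherian, and the two-sided Ore-solvable hypothesis yields Noetherian on both sides, which is the stated conclusion.
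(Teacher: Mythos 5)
Your bottom line---induct along the filtration $\KK=A_0\subset\dots\subset A_n=A$ and, at each step, invoke Theorem 2.8.1 of \cite{B} for the extension $A_{i-1}\subset A_i$---is exactly the paper's (one-line) proof, so in substance you land in the right place. But the route you call ``cleanest,'' presenting $A_i$ as a homomorphic image of a genuine Ore extension $A_{i-1}[x_i;\sigma_i,\theta_i]$, has a real gap and should be dropped rather than patched. For $R[x;\sigma,\delta]$ to be a well-defined ring one needs $\sigma$ to be a ring endomorphism and $\delta$ a $\sigma$-derivation; the Ore-solvable axioms supply only set-theoretic (at best linear) maps $\sigma_i,\theta_i$ with $ax_i=x_i\sigma_i(a)+\theta_i(a)$, and the paper explicitly disclaims any endomorphism, derivation, or freeness structure. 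There is no general way to ``lift the data to an honest endomorphism and skew derivation,'' so a written proof that hinges on $A_i$ being a quotient of such an extension rests on an unjustified construction.

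The fix is that no lift is needed: the cited theorem is stated not for Ore extensions but for an arbitrary ring $S$ generated by a subring $R$ and an element $a$ with $aR+R=Ra+R$, which is verbatim condition ($*$) for $A_{i-1}\subset A_i$ (with $a=x_i$); it gives left (resp.\ right) Noetherianity of $S$ from that of $R$ by a Hilbert-basis-type argument on leading coefficients of minimal-length expressions $\sum_k x_i^k a_k$, and the induction closes immediately. One bookkeeping correction: the containment $A_{i-1}x_i\subseteq x_iA_{i-1}+A_{i-1}$ pushes coefficients to the \emph{right} and yields $A_i=\sum_k x_i^kA_{i-1}$; the left-module decomposition $A_i=\sum_k A_{i-1}x_i^k$ you quote comes from the opposite containment. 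In the two-sided setting of the stated theorem both are available, so this is harmless here, but it is exactly the point at which the one-sided variants you mention at the end require care.
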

\noindent We also note the following straightforward fact: 
\begin{proposition}\label{p.fdsimplpf}
Let $A$ be an algebra. Then the following are equivalent: 
\begin{enumerate}
\item Every finite dimensional simple $A$-module is 1-dimensional.
\item Every locally finite $A$-module is locally p-finite. 
\end{enumerate}
\end{proposition}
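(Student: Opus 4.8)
The plan is to prove the two implications directly; the crux is the elementary observation that any simple subquotient of a locally finite module is itself finite dimensional.

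For $(2)\Rightarrow(1)$, suppose $S$ is a finite-dimensional simple $A$-module. Then $S$ is trivially locally finite, so by hypothesis it is locally p-finite, and in particular every simple subquotient of $S$ is $1$-dimensional. Since $S$ occurs as a simple subquotient of itself, $\dim_{\KK} S = 1$.

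For $(1)\Rightarrow(2)$, let $V$ be a locally finite $A$-module. First, $V$ is semiartinian: every finite-dimensional submodule has finite length, so $V$ is locally of finite length, and such modules are semiartinian (as recalled in Section \ref{s.1}). It then remains to check that every simple subquotient of $V$ is $1$-dimensional. Fix submodules $M \subsetneq N \subseteq V$ with $S = N/M$ simple, and choose $x \in N$ whose image $\bar{x} \in S$ is nonzero. Since $V$ is locally finite, $x$ lies in a finite sum of finite-dimensional submodules, hence in some finite-dimensional submodule $W \subseteq V$; then $A^1 x \subseteq W$ is finite dimensional, where $A^1$ denotes $A$ with a formal unit adjoined (this device handles the possibly non-unital case; when $A$ is unital one may simply take $Ax$). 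Moreover $A^1 x \subseteq N$, so its image in $S = N/M$ is a submodule of $S$ containing $\bar{x} \neq 0$; by simplicity of $S$ this image is all of $S$. Thus $S$ is a homomorphic image of the finite-dimensional module $A^1 x$, hence finite dimensional, and $(1)$ now forces $\dim_{\KK} S = 1$. Therefore every simple subquotient of $V$ is $1$-dimensional, and since $V$ is already locally finite, it is locally p-finite (the two formulations of ``locally p-finite'' agreeing by the remarks in Section \ref{s.1}).

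The argument is essentially formal and I do not anticipate a genuine obstacle; the only point requiring mild care is the passage to $A^1 x$ so that the relevant cyclic submodule remains finite dimensional even when $A$ has no unit, together with the bookkeeping that a simple subquotient $N/M$ of $V$ really is a quotient of a finite-dimensional cyclic submodule of $V$.
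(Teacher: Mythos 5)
Your proof is correct, and it is essentially the argument the paper has in mind: the paper states this as a ``straightforward fact'' and omits the proof entirely, the key point being exactly your observation that every simple subquotient of a locally finite module is a quotient of a finite-dimensional cyclic submodule, hence finite dimensional. The $(2)\Rightarrow(1)$ direction and the reduction to the definition of locally p-finite are handled correctly as well.
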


The following Lemma contains the main computation used in all the results that follow. \\

\begin{lemma}\label{l.comp}
Let $A\subseteq B$ be an algebra extension, and $V$ a $B$-module. Assume that:
\begin{enumerate}
\item[(i)] $B=A\cdot \KK[x]$ and  $Ax\subseteq xA+A$, so there are $\sigma,\theta:A\rightarrow A$ with $ax=x\sigma(a)+\theta(a)$, for all $a\in A$. 
\item[(ii)] $V$ is finite dimensional and contains an $A$-eigenvector $v\in V$ with weight $\lambda:A\rightarrow \KK$ (so $av=\lambda(a)v$ for $a\in A$), such that $V$ is generated by $v$ as a $\KK[x]$-module.  
\end{enumerate}
Then:
\begin{enumerate}
\item[(a)] The action of $a\in A$ on $V$ is given by:
\begin{eqnarray*}
av & = & \lambda(a)v;\\
a\cdot xv & = & \lambda(\sigma(a))xv+\lambda(\theta(a))v;\\
(*)\,\,\,\,\,\,\,\,\,\,\,\,\,a\cdot x^2v & = 
& \lambda(\sigma^2(a))x^2 v + \lambda((\sigma\circ\theta)(a)+(\theta\circ\sigma)(a))xv+\lambda(\theta^2(a))v\\
& \dots & \\
a\cdot x^iv & = &  \lambda(\sigma^i(a))x^iv + \sum\limits_{j<i}\mu_{ij}(a)x^{j}v\\
& \dots & 
\end{eqnarray*}
where for $i<j$, $\mu_{ij}:A\rightarrow \KK$ are linear maps which are sums of compositions of $\sigma$ and $\theta$.
\item[(b)] If $A$ is generated by a set $S$ such that $\theta(a)$ acts nilpotently on $V$ for each $a\in S$, then $V$ is a semisimple p-finite $A$-module with weights $\lambda\circ \sigma^k$; that is $V\cong \bigoplus\limits_{i=0}^{n-1}\KK x^iv$ as $A$-modules, and each $\KK x^iv$ is a $A$-module of weight $\lambda\circ \sigma^i$. 
\end{enumerate}
\end{lemma}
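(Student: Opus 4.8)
The plan is to establish part (a) by induction on $i$ and then read off part (b) from the resulting matrix form. For part (a), the base case $i=0$ is the hypothesis $av=\lambda(a)v$. For the inductive step, I would compute $a\cdot x^{i}v$ by first pushing $a$ past one copy of $x$: using (i), $ax = x\sigma(a)+\theta(a)$, so $a\cdot x^{i}v = x\sigma(a)\cdot x^{i-1}v + \theta(a)\cdot x^{i-1}v$. Now apply the induction hypothesis to both $\sigma(a)\cdot x^{i-1}v$ and $\theta(a)\cdot x^{i-1}v$; the first gives $\lambda(\sigma^{i}(a))x^{i-1}v$ plus lower-order terms of the form $\mu_{i-1,j}(\sigma(a))x^{j}v$, and after multiplying by $x$ on the left we get $\lambda(\sigma^{i}(a))x^{i}v$ plus a combination of $x^{j+1}v$ for $j<i-1$, i.e. terms $x^{\ell}v$ with $\ell\le i-1<i$. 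The $\theta(a)$ term contributes only $x^{\ell}v$ with $\ell\le i-1$ as well. Collecting, one sees the leading coefficient is exactly $\lambda(\sigma^{i}(a))$ and each lower coefficient $\mu_{ij}(a)$ is a $\KK$-linear combination of terms of the form $\lambda(w(a))$ where $w$ is a word in $\sigma,\theta$; in particular each $\mu_{ij}$ is linear in $a$ and is a sum of compositions of $\sigma$ and $\theta$ followed by $\lambda$. (One should note that $V$ being $\KK[x]$-generated by $v$ forces $V$ to be spanned by $v, xv, x^2v,\dots$, and finite-dimensionality means some finite such list, say $x^0v,\dots,x^{n-1}v$, spans; but one must be slightly careful because these need not be linearly independent — however for the formula in (a) this does not matter, as the identities hold in $V$ regardless.)

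For part (b), I would first argue that we may assume $v, xv, \dots, x^{n-1}v$ are linearly independent (pass to a spanning subset, or note that if they are dependent then $V$ has smaller dimension and induct), so that they form a basis of $V$ and $\dim V = n$. Order this basis as $x^{n-1}v, x^{n-2}v, \dots, xv, v$. By part (a), in this ordered basis every $a\in A$ acts by an upper-triangular matrix with diagonal entries $\lambda(\sigma^{n-1}(a)), \dots, \lambda(\sigma(a)), \lambda(a)$. Thus $V$ is already a triangularizable (p-semiartinian) $A$-module with the composition factors $\KK x^{i}v$ carrying characters $\lambda\circ\sigma^{i}$. To upgrade triangularizable to semisimple, i.e. to split the flag, I would use the hypothesis on $\theta$: if $a\in S$ then $\theta(a)$ acts nilpotently on $V$, and since $\theta(a)$ acts by an upper-triangular matrix (again by (a), applied to $\theta(a)\in A$), its diagonal entries $\lambda(\sigma^{i}(\theta(a)))$ must all be $0$. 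The key consequence is that for $a\in S$, the strictly-upper-triangular ``error terms'' $\mu_{ij}(a)$ must actually vanish: more precisely, the off-diagonal part of the matrix of $a$ is built from the $\mu_{ij}(a)$, which are sums of compositions involving at least one $\theta$, hence expressions of the form $\lambda(\sigma^{k}\theta\cdots(a))$; I need to show these are zero when $\theta(a)$ is nilpotent. The cleanest route is: $\lambda(\sigma^{k}(b)) = 0$ for every $b$ in the ideal (or at least the $\sigma$-stable set) generated by nilpotent elements on which $\lambda$ vanishes — indeed $\lambda$ is an algebra character so it kills nilpotents, and $\lambda(\sigma^k(\theta(a))) = 0$ because $\sigma^k(\theta(a))$ acts nilpotently (it is a conjugate-type image) hence is a nilpotent operator hence in $\ker\lambda$...

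Actually the main obstacle, and the step I would think hardest about, is exactly this last point: controlling the lower-order coefficients $\mu_{ij}$ and concluding that the $A$-action on $V$ is diagonal (not merely triangular) in the basis $\{x^iv\}$. The honest approach is likely: show directly that each $\KK x^i v$ is an $A$-submodule. For this I would prove the sharper claim that $a\cdot x^i v \equiv \lambda(\sigma^i(a)) x^i v$ modulo $\mathrm{Span}(x^0v,\dots,x^{i-1}v)$ and that in fact the correction is zero, by inducting on $i$ while simultaneously using that the operators $\theta(a)$ ($a\in S$), being nilpotent and upper-triangular in this basis, are strictly upper triangular, and that $\theta$ and $\sigma$ interact so that $\theta(a)$ acting on $x^{i-1}v$ lands in the span of lower terms with coefficients $\lambda(\text{word in }\sigma,\theta\text{ with a }\theta)(a)$; one then needs a vanishing lemma saying $\lambda\circ(\text{such word}) = 0$ on $S$, which reduces to $\lambda\circ\sigma^k\circ\theta = 0$ on $S$ since $\lambda$ is multiplicative and $\sigma$-twists of nilpotents are nilpotent. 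Since it suffices to check on generators $a\in S$, and the diagonal characters $\lambda\circ\sigma^i$ are algebra maps, the submodule property $A\cdot x^iv \subseteq \KK x^i v$ then propagates from $S$ to all of $A$. Once each $\KK x^i v$ is shown to be an $A$-submodule of character $\lambda\circ\sigma^i$, the decomposition $V = \bigoplus_{i=0}^{n-1}\KK x^i v$ as $A$-modules is immediate, giving (b).
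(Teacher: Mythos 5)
Your proof of part (a) is correct and is exactly the paper's argument (push $a$ past one $x$ at a time and induct). The problem is in part (b), at precisely the step you flag as the hardest: killing the lower-order coefficients $\mu_{ij}(a)$ for $a\in S$. Your observation that $\theta(a)$ is nilpotent and upper-triangular in the basis $\{x^iv\}$, hence has vanishing diagonal, correctly gives $\lambda(\sigma^k(\theta(a)))=0$; but the $\mu_{ij}$ are \emph{not} all of the form $\lambda\circ\sigma^k\circ\theta$. Already $\mu_{21}(a)=\lambda\bigl((\sigma\circ\theta)(a)+(\theta\circ\sigma)(a)\bigr)$ and $\mu_{20}(a)=\lambda(\theta^2(a))$ contain the words $\theta\circ\sigma$ and $\theta\circ\theta$, in which $\theta$ is not innermost, and your proposed reduction does not reach them. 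The supporting claim that ``$\sigma$-twists of nilpotents are nilpotent'' is unjustified: $\sigma$ is merely the linear map supplied by the Ore condition (not assumed to be an algebra map, let alone one preserving nilpotency of operators on $V$), and nothing in the hypotheses forces $\theta(\sigma(a))$ or $\theta(\theta(a))$ to act nilpotently. So the word-by-word vanishing lemma you need is not established, and the argument as written does not close.

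The paper avoids this entirely by never expanding $a\cdot x^iv$ all the way down to words in $\sigma,\theta$ applied to $a$. Instead it inducts on $i$ through the submodules themselves: $\lambda(\theta(a))$ is the eigenvalue of the nilpotent operator $\theta(a)$ on its eigenvector $v$, hence zero for $a\in S$, so $a\cdot xv=\lambda(\sigma(a))xv$ for $a\in S$ and $\KK xv$ is $A$-invariant (products and sums of elements of $S$ preserve it). Now $xv$ is an $A$-eigenvector with some character $\mu_1$, and one repeats verbatim: $a\cdot x^{i+1}v=\mu_i(\sigma(a))x^{i+1}v+\mu_i(\theta(a))x^iv$, where $\mu_i(\theta(a))$ is the eigenvalue of the nilpotent $\theta(a)$ on its eigenvector $x^iv$, hence zero for $a\in S$. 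This only ever invokes nilpotency of $\theta(a)$ itself, never of its images under $\sigma$ or $\theta$, and yields that each $\KK x^iv$ is a $1$-dimensional submodule of weight $\lambda\circ\sigma^i$ (on generators, hence everywhere). Your final paragraph correctly identifies the target (each $\KK x^iv$ is a submodule) but executes the induction through the flawed vanishing lemma rather than through the recursively obtained weights $\mu_i$; replacing that step with the eigenvalue argument above repairs the proof.
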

\begin{proof}
(a) Note that $a\cdot xv  = x\sigma(a)\cdot v+\theta(a)v= \lambda(\sigma(a))xv+\lambda(\theta(a))v$, and $a\cdot x^2v = (ax)(xv) = (x\sigma(a)+\theta(a))(\lambda(\sigma(a))xv+\lambda(\theta(a))v) = \lambda(\sigma^2(a))x^2 v + \lambda((\sigma\circ\theta)(a)+(\theta\circ\sigma)(a))xv+\lambda(\theta^2(a))v$. The general formula follows by an easy induction. \\
(b) By (a), the action of $a$ on $V$ is upper triangular in a basis $\{v,\dots,x^kv\}$, with eigenvalues $[\lambda\circ \sigma^i](a)$. If $\theta(a)$ acts nilpotently for $a\in S$, then $\lambda(\theta(a))=0$ and so $a\cdot xv=\lambda(\sigma(a))\cdot xv$. Since this holds for $a\in S$, we immediately observe that $f(a_1,\dots,a_n)\cdot xv\in \KK xv$ for any (noncommutative) polynomial $f$ and $a_1,\dots,a_n\in A$, and so $\KK xv$ is $A$-invariant. Hence, we must have $a\cdot xv=[\lambda\circ \sigma](a)\cdot xv$ for all $a\in A$, and thus $\lambda\circ\sigma$ is a weight. Applying the same argument for the $A$-eigenvector $xv$ we obtain again that $x^2v$ is an $A$-eigenvector, of weight $\lambda\circ \sigma^2$. Hence, inductively we get $\KK x^iv$ are all simple 1-dimensional $A$-modules, and they span $V$, from which the conclusion follows. 
\end{proof}

\noindent We apply the above computational Lemma to several types of triangularization results.

\subsection{Characteristic Zero}

\begin{theorem}\label{t.genlie}
Suppose $\KK$ is an algebraically closed field of characteristic $0$. Let $A$ be an Ore-solvable algebra such that the Ore datum satisfies: 
\begin{enumerate}
\item[(i)] For each $i$, $A_{i-1}$ is generated by a set $S_i$ such that $\theta_i(a)\in [A_{i},A_{i}]$ for all $a\in S_i$;
\item[(ii)] For each $i$, the semigroup generated by $\sigma_i$ acts trivially on the set of 1-dimensional characters of $A_{i-1}$ ($\lambda\circ\sigma_i=\lambda$ for every 1-dimensional character $\lambda$ of $A_{i-1}$). 
\end{enumerate}
Then the following hold: 
\begin{enumerate}
\item Every finite dimensional simple $A$-module is 1-dimensional.  
\item If $V$ is an $A$-module, then $V$ is a locally p-finite $A$-module (equivalently, locally finite, or triangularizable in this case) if and only if each $x_i$ acts locally finite on $V$.  
\end{enumerate} 
\end{theorem}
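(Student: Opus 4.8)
The plan is to induct on the length $n$ of the Ore-solvable filtration $\KK = A_0 \subset A_1 \subset \dots \subset A_n = A$, using Lemma \ref{l.comp} as the engine at each step. For the base case $n = 0$ there is nothing to prove. For the inductive step, write $A = A_{n-1}\cdot\KK[x_n]$ with Ore datum $(x_n,\sigma_n,\theta_n)$, and assume the theorem holds for the Ore-solvable algebra $A_{n-1}$ (whose Ore datum inherits conditions (i) and (ii), since $[A_{i},A_i] \subseteq [A_{n-1},A_{n-1}]$ is not quite automatic — I will need to be slightly careful here, see below). The key point for (1) is: let $V$ be a finite dimensional simple $A$-module. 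As an $A_{n-1}$-module it is locally finite, hence by the inductive hypothesis it is locally p-finite, so it contains an $A_{n-1}$-eigenvector $v$ with some weight $\lambda : A_{n-1}\to\KK$ (here is where $\KK$ algebraically closed is used, to guarantee a common eigenvector for the finite-dimensional algebra $A_{n-1}/\mathrm{ann}(V)$ acting on the finite-dimensional space — actually one should invoke that $V$ restricted to $A_{n-1}$ has a 1-dimensional submodule). Then $W = \KK[x_n]v \subseteq V$ is a finite-dimensional $\KK[x_n]$-submodule which is $A_{n-1}$-stable by condition (i): indeed $\theta_n(a) \in [A,A]$ acts on the 1-dimensional $A_{n-1}$-modules $\KK x_n^j v$ with trace zero, and being 1-dimensional this forces $\lambda\circ\sigma_n^k(\theta_n(a)) = 0$ for the relevant weights; more precisely I want to run the computation of Lemma \ref{l.comp}(b). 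The subtlety is that $\theta_n(a)$ acting ``nilpotently'' is replaced by $\theta_n(a) \in [A,A]$, which on a 1-dimensional module acts as zero (a commutator has zero trace, and on a 1-dimensional space trace $=$ the scalar), and condition (ii) guarantees $\lambda\circ\sigma_n^k = \lambda$ so all the weights $\lambda\circ\sigma_n^k$ coincide. Hence Lemma \ref{l.comp}(b) (with the obvious adaptation: replace ``$\theta(a)$ nilpotent'' by ``$\lambda\circ\sigma^k(\theta(a)) = 0$ for all relevant $k$'', which is what is actually used in its proof) shows $W$ is a semisimple $A_{n-1}$-module, in fact with every simple summand of the same weight $\lambda$; then $W$ is $x_n$-stable by construction and $A_{n-1}$-stable, hence $A$-stable, so by simplicity $W = V$, and $W$ is 1-dimensional as an $A$-module because $x_n$ acts on the $A_{n-1}$-isotypic space as ... hmm, I actually need $x_n$ to act by a scalar: since $\KK$ is algebraically closed and $W$ is finite-dimensional, $x_n$ has an eigenvector $w\in W$, and $\KK w$ is then $x_n$-stable and $A_{n-1}$-stable (as $W$ is $A_{n-1}$-semisimple homogeneous of weight $\lambda$, every line is $A_{n-1}$-stable), hence $A$-stable, hence $\KK w = V$. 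This proves (1).

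For part (2), the ``only if'' direction is immediate since $x_i \in A$ acts locally finitely on any locally finite module. For ``if'': suppose each $x_i$ acts locally finitely on $V$. By the remark following the definition of Ore-solvable algebras (which cites Corollary \ref{c.locfincond}), $V$ is then a locally finite $A$-module. By (1), every finite-dimensional simple $A$-module is 1-dimensional, so by Proposition \ref{p.fdsimplpf} every locally finite $A$-module is locally p-finite; thus $V$ is locally p-finite, which in this case is the same as triangularizable, and also the same as locally finite. This establishes the equivalence of all the listed properties.

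The step I expect to be the main obstacle is making Lemma \ref{l.comp}(b) do the work I want in the presence of the weaker hypothesis $\theta_i(a) \in [A_i,A_i]$ rather than ``$\theta_i(a)$ nilpotent.'' The point is that the proof of Lemma \ref{l.comp}(b) only ever uses that $\lambda(\theta(a)) = 0$ on the 1-dimensional $A$-modules that appear — and a commutator always acts as a scalar with zero trace on any finite-dimensional module, hence as $0$ on a 1-dimensional one. But I must be careful about \emph{which} module's commutators: $\theta_n(a) \in [A_n, A_n] = [A,A]$ need not lie in $[A_{n-1},A_{n-1}]$, and the weights appearing are weights of $A_{n-1}$; so I need to check that $\lambda$ extends to (or at least, that the relevant evaluation vanishes on) $[A,A]\cap A_{n-1}$ — equivalently, that the 1-dimensional $A_{n-1}$-module $\KK x_n^j v$ is in fact the restriction of a 1-dimensional $A$-module, which is exactly what condition (ii) ($\lambda\circ\sigma_n = \lambda$) is there to guarantee, since it lets one define $\mu : A\to\KK$ by $\mu|_{A_{n-1}} = \lambda$, $\mu(x_n) = $ (the scalar by which $x_n$ acts), and check $\mu$ is an algebra character using $ax_n = x_n\sigma_n(a) + \theta_n(a)$. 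Pinning down this bookkeeping — that the $A_{n-1}$-weights arising are restrictions of genuine $A$-characters — is the crux; once that is in hand, ``commutator acts by zero on a character'' finishes everything and the induction goes through cleanly. A secondary technical point is the careful use of algebraic closedness to extract common eigenvectors for finite-dimensional (not necessarily semisimple) algebras acting on finite-dimensional spaces, which is standard (Lie–Kolchin type argument / the algebra mod its annihilator is finite-dimensional, pass to a simple submodule of the restriction).
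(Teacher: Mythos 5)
Your overall strategy --- induction on $n$, extracting an $A_{n-1}$-eigenvector $v$ of weight $\lambda$ from a finite-dimensional simple $A$-module $V$, showing $V=\KK[x_n]v$ is $A$-stable via Lemma \ref{l.comp}, reducing everything to the vanishing $\lambda(\theta_n(a))=0$ so that Lemma \ref{l.comp}(b) applies, and then cutting down to an $x_n$-eigenline --- is exactly the paper's, and your treatment of part (2) via Proposition \ref{p.fdsimplpf} and Corollary \ref{c.locfincond} is identical to the paper's. The gap is precisely at the step you yourself flag as ``the crux'': your proposed justification of $\lambda(\theta_n(a))=0$ is circular. You want to deduce it from ``a commutator acts as zero on a $1$-dimensional module,'' but this requires the line $\KK x_n^jv$ to be a module over $A_n$ (since $\theta_n(a)\in[A_n,A_n]$, not $[A_{n-1},A_{n-1}]$); and your proposed verification that $\lambda$ extends to an $A_n$-character $\mu$ amounts to checking $\lambda(a)\mu(x_n)=\mu(x_n)\lambda(\sigma_n(a))+\lambda(\theta_n(a))$ on the Ore relation, which, given $\lambda\circ\sigma_n=\lambda$, is exactly the identity $\lambda(\theta_n(a))=0$ you are trying to prove. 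A telling symptom is that your argument never actually uses ${\rm char}(\KK)=0$.

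The paper closes this gap by computing the trace of $\theta_n(a)$ on all of $V$ rather than on a line: by Lemma \ref{l.comp}(a), every element of $A_{n-1}$ acts upper-triangularly in the basis $v,x_nv,\dots$ with diagonal entries $[\lambda\circ\sigma_n^j](-)$, which by hypothesis (ii) all equal $\lambda(-)$; since $V$ genuinely is an $A_n$-module and $\theta_n(a)\in[A_n,A_n]$ maps to a commutator in $\End(V)$, one gets $0={\rm Tr}_V(\theta_n(a))=\dim(V)\cdot\lambda(\theta_n(a))$, and characteristic zero forces $\lambda(\theta_n(a))=0$, hence $\theta_n(a)$ acts nilpotently. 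From that point your argument (semisimplicity and homogeneity of $V$ over $A_{n-1}$ by Lemma \ref{l.comp}(b), then an $x_n$-eigenvector spans an $A$-submodule) goes through as written. A minor remark: your worry about whether the inductive hypothesis applies to $A_{n-1}$ is unfounded, since conditions (i) and (ii) for the truncated filtration $A_0\subset\dots\subset A_{n-1}$ are literally the given conditions for $i\le n-1$; nothing needs to be ``inherited.''
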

\begin{proof}
(1) We argue by induction on $n$: for $n=0$ it is obvious, so let $n\geq 1$. For simplicity of notation let $x = x_n$, $\sigma = \sigma_n$ and $\theta = \theta_n$.  Let $V$ be a finite dimensional simple $A$-module. Since $V$ is finite dimensional, it contains a simple $A_{n-1}$-submodule. This submodule must be 1-dimensional by the inductive hypothesis, and therefore it is generated by an $A_{n-1}$-eigenvector $v \in V$ and has an associated weight $\lambda:A_{n-1}\rightarrow \KK$. By Lemma \ref{l.comp} (a), if $a\in S_n$ then $\theta(a)$ acts with eigenvalues $[\lambda\circ\sigma^i](\theta(a))$; by (b), these are all equal to $\lambda(\theta(a))$, and so $Tr(\theta(a))=n\lambda(a)=0$ since $\theta(a)\in [A_n,A_n]$ (as an endomorphism of $V$, $\theta(a)$ is a commutator). By the characteristic assumption we get $\lambda(a)=0$, and hence $\theta(a)$ is nilpotent. Now by Lemma \ref{l.comp} (b), $V$ is a direct sum of 1-dimensional modules, which are all isomorphic (since $\lambda\circ\sigma^i=\lambda$), and the action of $A$ on $V$ is simply given by $a\cdot w=\lambda(a)w$ for each $w\in V$. Hence, choosing $w$ to be an $x$-eigenvector, it will then also be a $B$-eigenvector, so $V=\KK w$ by the simplicity of $V$.\\
(2) By Proposition \ref{p.fdsimplpf}, every locally finite $A$-module is in fact locally p-finite. The result follows from this observation together with Corollary \ref{c.locfincond}.
\end{proof}

\noindent We also note the following second triangularization result for Ore-solvable algebras, where the condition on $\sigma$ is a homological one. We will use the following standard fact of finite dimensional algebras. 

\begin{lemma}
Let $V$ be a finite dimensional indecomposable module over an algebra $H$, with a composition series $0=V_0\subseteq V_1\subset \dots\subset V_n$ with (simple) quotients $S_i=V_i/V_{i-1}$. Then there is $i<j$ such that $\Ext^1_B(S_j,S_i)\neq 0$. 
\end{lemma}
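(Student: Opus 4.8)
The plan is to prove the contrapositive: assuming $\Ext^1_H(S_j,S_i)=0$ for all $i<j$, I will show that the given filtration splits completely, so that $V\cong S_1\oplus\cdots\oplus S_n$, which contradicts indecomposability as soon as $n\ge 2$ (the case $n=1$ being vacuous, so the statement is understood for a module that is not simple).

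First I would prove, by induction on $k$, that $V_k\cong S_1\oplus\cdots\oplus S_k$ for every $k\le n$. The base case $k=1$ is immediate since $V_1=V_1/V_0=S_1$. For the inductive step, consider the short exact sequence $0\to V_{k-1}\to V_k\to S_k\to 0$, whose class lies in $\Ext^1_H(S_k,V_{k-1})$. Using the inductive hypothesis $V_{k-1}\cong\bigoplus_{i<k}S_i$ together with the additivity of $\Ext^1$ in its second argument over finite direct sums, one gets $\Ext^1_H(S_k,V_{k-1})\cong\bigoplus_{i<k}\Ext^1_H(S_k,S_i)$, and every summand vanishes by hypothesis (apply it with $j=k$ and the chosen $i<k$). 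Hence the sequence splits and $V_k\cong V_{k-1}\oplus S_k\cong S_1\oplus\cdots\oplus S_k$.

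Taking $k=n$ then gives $V=V_n\cong S_1\oplus\cdots\oplus S_n$, with each $S_i\neq 0$; since $V$ is indecomposable this is impossible when $n\ge 2$. This contradiction shows that some $\Ext^1_H(S_j,S_i)$ with $i<j$ must be nonzero, as claimed.

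The argument is essentially routine homological algebra; the only point requiring a little care is the index bookkeeping, namely ensuring that at each step one invokes only the vanishing of $\Ext^1_H(S_j,S_i)$ with the index of the quotient $S_j$ strictly larger than that of the sub $S_i$ — which is exactly what induction running upward along the filtration provides. (Equivalently, one could devissage through the long exact $\Ext$-sequences attached to the steps of the filtration of $V_{n-1}$ to deduce directly that $\Ext^1_H(S_n,V_{n-1})=0$ and hence that $0\to V_{n-1}\to V_n\to S_n\to 0$ splits; either route works.)
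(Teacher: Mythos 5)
Your proof is correct. The paper itself states this lemma as a ``standard fact of finite dimensional algebras'' and gives no proof, so there is nothing to compare against; your d\'evissage argument --- inducting up the filtration, using additivity of $\Ext^1$ in the second variable to get $\Ext^1_H(S_k,V_{k-1})\cong\bigoplus_{i<k}\Ext^1_H(S_k,S_i)=0$ and hence splitting at each stage --- is exactly the standard proof one would supply. Your remark that the statement must implicitly exclude the case $n=1$ (a simple module is indecomposable but admits no pair $i<j$) is a fair observation about the lemma as written, and in the paper's application the lemma is only invoked through its contrapositive, where this edge case causes no harm.
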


\begin{theorem}\label{t.genlie2}
Suppose $\KK$ is an algebraically closed field of characteristic $0$. Let $A$ be an Ore-solvable algebra, satisfying the following conditions: 
\begin{enumerate}
\item[(i)] For each $i$, $A_{i-1}$ is generated by a set $S_i$ such that $\theta_i(a)\in [A_{i},A_{i}]$ for all $a\in S_i$;\\
\item[(ii)] For each $i$, $\sigma_i$ is an algebra endomorphism of $A_{i-1}$ such that if $\lambda$ is a 1-dimensional character of $A_{i-1}$, then $\Ext^1(\lambda\circ\sigma_i^k,\lambda)=0$ (as $A_{i-1}$-modules), whenever $\lambda\neq \lambda\circ\sigma^i$.  
\end{enumerate}
Then the conclusions of Theorem \ref{t.genlie} hold: every finite dimensional module is 1-dimensional, and an $A$-module $V$ is p-finite (equivalently, locally finite) if and only if every $x_i$ acts locally finite on $V$. 
\end{theorem}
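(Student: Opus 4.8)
Here is a plan for proving Theorem~\ref{t.genlie2}.

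The plan is to follow the proof of Theorem~\ref{t.genlie} as closely as possible, replacing the use there of triviality of $\sigma_i$ on characters by the homological hypothesis~(ii) here together with the Lemma stated just above. First, part~(2) reduces to part~(1) exactly as in Theorem~\ref{t.genlie}: by Proposition~\ref{p.fdsimplpf}, ``every finite dimensional simple $A$-module is $1$-dimensional'' is equivalent to ``every locally finite $A$-module is locally p-finite'', and Corollary~\ref{c.locfincond} identifies the locally finite $A$-modules with those on which every $x_i$ acts locally finitely. So I would concentrate on part~(1), proving by induction on the length $n$ of the Ore filtration that every finite dimensional simple $A_n$-module is $1$-dimensional, the case $n=0$ being trivial.

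For the inductive step, write $x=x_n$, $\sigma=\sigma_n$, $\theta=\theta_n$, $S=S_n$, and let $V$ be a finite dimensional simple $A_n$-module. By the inductive hypothesis $V$ contains a $1$-dimensional $A_{n-1}$-submodule $\KK v$, with some character $\lambda\colon A_{n-1}\to\KK$. Using the Ore relation $A_{n-1}x\subseteq xA_{n-1}+A_{n-1}$ (so $ax^i\in\sum_{l\le i}x^lA_{n-1}$) together with $A_{n-1}v=\KK v$, one sees that $\KK[x]v$ is an $A_n$-submodule, hence $V=\KK[x]v$ by simplicity. Thus $v,xv,\dots,x^kv$ is a basis of $V$ for some $k$, the subspaces $V_i:=\operatorname{Span}_\KK\{v,\dots,x^iv\}$ form an ascending chain of $A_{n-1}$-submodules, and by Lemma~\ref{l.comp}(a) one has $V_i/V_{i-1}\cong\lambda\circ\sigma^i$; so as an $A_{n-1}$-module $V$ has a composition series with factors $\lambda,\lambda\circ\sigma,\dots,\lambda\circ\sigma^k$ in this order. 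If I can show that $\theta(a)$ acts nilpotently on $V$ for every $a\in S$ and that $\lambda\circ\sigma^i=\lambda$ for $0\le i\le k$, then the argument of Theorem~\ref{t.genlie} finishes the proof: Lemma~\ref{l.comp}(b) yields $V\cong\bigoplus_{i=0}^k\KK x^iv$ as $A_{n-1}$-modules with $A_{n-1}$ acting through $\lambda$ on every summand, hence through the scalar character $\lambda$ on $V$; then any $x$-eigenvector $w\in V$ (which exists since $\KK$ is algebraically closed) spans an $A_n$-submodule, forcing $V=\KK w$.

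The main obstacle is precisely establishing those two facts, and this is where hypotheses~(i) and~(ii) and the Lemma above are meant to interact. The idea is to decompose $V$ as an $A_{n-1}$-module into indecomposables $V=\bigoplus_m U_m$; intersecting the chain $(V_i)$ with $U_m$ produces a composition series of $U_m$ whose factors form a subsequence $\lambda\circ\sigma^{i^m_1},\dots,\lambda\circ\sigma^{i^m_{r_m}}$ (with $i^m_1<\dots<i^m_{r_m}$) of $\lambda,\dots,\lambda\circ\sigma^k$. If $r_m\ge 2$ then the Lemma above produces $p<q$ with $\Ext^1_{A_{n-1}}(\lambda\circ\sigma^{i^m_q},\lambda\circ\sigma^{i^m_p})\ne 0$; writing $\lambda\circ\sigma^{i^m_q}=(\lambda\circ\sigma^{i^m_p})\circ\sigma^{\,i^m_q-i^m_p}$ and invoking hypothesis~(ii) forces $\lambda\circ\sigma^{i^m_p}=\lambda\circ\sigma^{i^m_q}$; iterating, one wants to conclude that each $U_m$ is homogeneous, with a single $A_{n-1}$-character $\mu_m$. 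Then $\theta(a)$ is block upper triangular along the decomposition, with constant diagonal $\mu_m(\theta(a))$ on $U_m$, while hypothesis~(i) gives $\theta(a)\in[A_n,A_n]$ and hence $\operatorname{Tr}_V\theta(a)=0$. The delicate part, and where I expect the real work to lie (a genuine use of characteristic zero, together with the way $x$ shifts the $A_{n-1}$-character by $\sigma$, coming from $ax=x\sigma(a)+\theta(a)$), is to deduce from this that every $\mu_m(\theta(a))$ vanishes, i.e. that $\theta(a)$ is nilpotent and each $\mu_m$ equals $\lambda$; granting it, the induction closes as described above, and part~(2) follows.
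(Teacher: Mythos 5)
Your plan is sound up to the decomposition of $V$ into $A_{n-1}$-homogeneous pieces, and it matches the paper's strategy that far: both use Lemma \ref{l.comp}(a) to get the composition series with factors $\lambda,\lambda\circ\sigma,\dots,\lambda\circ\sigma^k$, and both combine the indecomposability lemma with hypothesis (ii) to split $V$ as an $A_{n-1}$-module into summands on which all composition factors are a single character $\mu$ satisfying $\mu\circ\sigma=\mu$. But the step you explicitly leave open --- deducing that every $\mu_m(\theta(a))$ vanishes and every $\mu_m$ equals $\lambda$ --- is a genuine gap, and it cannot be closed the way you suggest: $\operatorname{Tr}_V\theta(a)=0$ is a single linear relation $\sum_m \dim(U_m)\,\mu_m(\theta(a))=0$, which does not force the individual terms to vanish when several distinct $\mu_m$ occur.

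The paper closes this gap by a different move, made \emph{before} the trace argument: it shows there is only one $\mu$. Pick any character $\mu$ occurring in the decomposition and an $A_{n-1}$-eigenvector $w$ of weight $\mu$ (in the socle of the corresponding summand). By Lemma \ref{l.comp}(a) and the Ore relation, $\operatorname{Span}_\KK\{w,xw,x^2w,\dots\}$ is both $x$-invariant and $A_{n-1}$-invariant, hence an $A_n$-submodule, hence all of $V$ by simplicity; and its $A_{n-1}$-composition factors are $\mu,\mu\circ\sigma,\mu\circ\sigma^2,\dots$, all equal to $\mu$ since $\mu\circ\sigma=\mu$. Therefore $V$ is $\mu$-homogeneous as an $A_{n-1}$-module, so $\mu=\lambda$ and one may assume $\lambda\circ\sigma^i=\lambda$ for all $i$ from the start. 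Only now is the trace used: it becomes $\dim(V)\,\lambda(\theta(a))=0$, so $\lambda(\theta(a))=0$ in characteristic zero, $\theta(a)$ is nilpotent on $V$, and Lemma \ref{l.comp}(b) plus the choice of an $x$-eigenvector finishes the induction exactly as you describe. You should insert this homogeneity argument at the point you flagged; without it the proof does not go through.
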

\begin{proof}
We proceed by induction as in the proof of Theorem \ref{t.genlie}, and use the same notation as there for a finite dimensional simple $A_n$-module $V$. Using the calculation of Lemma \ref{l.comp}, we interpret the matrix of $a\in A_{n-1}$ in the basis $v,xv,\dots,x^{n-1}v$ to show that $V$ has a composition series with 1-dimensional quotients with characters $\lambda,\lambda\circ\sigma,\dots,\lambda\circ\sigma^{n-1}$, in this order. For each $i<j$, hypothesis (ii) applied to the character $\lambda\circ\sigma^{j-i}$ shows that $\Ext^1(\lambda\circ\sigma^{j},\lambda\circ\sigma^i)=0$ whenever $\lambda\circ\sigma^i\neq \lambda\circ\sigma^j$. Using block theory and the previous Lemma, since there are no extensions between non-isomorphic simple $A_{i-1}$-modules among those having characters $\lambda\circ \sigma_i$, we conclude that $V$ splits as an $A_{i-1}$-module into a direct sum of modules $V=\bigoplus_{\mu\in F} V_\mu$, each of which is homogeneous in the sense that the composition factors of $V_\mu$ are all isomorphic to the same (1-dimensional character) $\mu$. Moreover, for each such $\mu$ we have $\mu\circ\sigma=\mu$. Using again the computation of Lemma \ref{l.comp}, and starting with some $A$-eigenvector $w$ of weight $\mu$, we obtain a submodule of $V_\mu$, spanned by some $w,\ldots,x^kw$ and invariant under the action of $x$. Since $V$ is a simple $A_n$-module, this has to equal $V$.\\
Hence, we may assume from the start, without loss of generality and using the same notations as before, that $\lambda\circ\sigma^i=\lambda$ for all $i$. For $a\in S_i$, Lemma \ref{l.comp} (i) shows that $Tr_V(\theta(a))=\dim(V)\lambda(\theta(a))=0$ by hypothesis (i), since $\theta(a)$ is a commutator when viewed as an element of $\End(V)$. Thus, we again obtain that $\theta(a)$ acts nilpotently on $V$. By Lemma \ref{l.comp}, $V$ is a direct sum of $1$-dimesional $A_{n-1}$-modules, all of weight $\lambda$. As in the proof of Theorem \ref{t.genlie}, we choose $w$ to be an eigenvector for $x$, and since $ax=\lambda(a)x$ for $a\in A_{n-1}$ we obtain that $A_nw$ is an $A_n$-submodule of $V$. We conclude $V=A_nw$ is 1-dimensional by simplicity. The second assertion follows as before.  
\end{proof}

\noindent We note now that Lie's Theorem on the triangularizability of solvable Lie algebras can be obtained here as a corollary.

\begin{corollary}[Lie's Theorem]
Let $\KK$ be an algebraically closed field of characteristic 0, $V$ be a finite dimensional $\KK$-vector space, and $\mathfrak{g} \subset \End(V)$ a solvable Lie algebra. Then 
$V$ contains a $\mathfrak{g}$-flag.
\end{corollary}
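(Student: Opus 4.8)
The plan is to deduce this from Theorem \ref{t.genlie} applied to the universal enveloping algebra $U(\mathfrak{g})$. Since $\mathfrak{g}$ is solvable, we may refine its derived series to obtain an elementary sequence, i.e. a chain of ideals $0=\mathfrak{g}_0\subset\mathfrak{g}_1\subset\dots\subset\mathfrak{g}_n=\mathfrak{g}$ with $\dim_{\KK}(\mathfrak{g}_i/\mathfrak{g}_{i-1})=1$ for each $i$; choose $x_i\in\mathfrak{g}_i$ inducing a basis of $\mathfrak{g}_i/\mathfrak{g}_{i-1}$. By Example \ref{e.solvlie}, $A:=U(\mathfrak{g})$ is an Ore-solvable algebra with filtration $A_i=U(\mathfrak{g}_i)$ (so $A_0=\KK$), cyclic pieces $B_i=\KK[x_i]$, and left Ore datum $\sigma_i=\mathrm{id}$, $\theta_i(a)=[a,x_i]=ax_i-x_ia$, coming from the relation $a x_i = x_i a + [a,x_i]$ with $[a,x_i]\in\mathfrak{g}_{i-1}$ for $a\in\mathfrak{g}_{i-1}$.

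Next I would verify the two hypotheses of Theorem \ref{t.genlie}. Hypothesis (ii) is immediate: since $\sigma_i=\mathrm{id}$, the semigroup it generates fixes every $1$-dimensional character of $A_{i-1}$. For hypothesis (i), take $S_i$ to be a $\KK$-basis of $\mathfrak{g}_{i-1}$, which generates $A_{i-1}=U(\mathfrak{g}_{i-1})$ as an algebra; for $a\in S_i$ the element $\theta_i(a)=ax_i-x_ia$ is literally a commutator in the associative algebra $A_i=U(\mathfrak{g}_i)$, hence $\theta_i(a)\in[A_i,A_i]$. This last observation is the only point needing attention, and it is essentially trivial; in particular there is no genuine obstacle in this argument — the substance is already packed into Theorem \ref{t.genlie}. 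Thus Theorem \ref{t.genlie}(1) applies and shows that every finite-dimensional simple $U(\mathfrak{g})$-module is $1$-dimensional.

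Finally I would transfer this back to $V$. The inclusion $\mathfrak{g}\hookrightarrow\End(V)$ extends uniquely to a $\KK$-algebra homomorphism $U(\mathfrak{g})\to\End(V)$, so $V$ becomes a $U(\mathfrak{g})$-module, finite-dimensional by hypothesis. Every simple subquotient of $V$ is then a finite-dimensional simple $U(\mathfrak{g})$-module, hence $1$-dimensional by the previous paragraph; equivalently, each $x_i$ acts locally finitely on the finite-dimensional space $V$, so Theorem \ref{t.genlie}(2) shows directly that $V$ is locally $p$-finite. Either way, $V$ is $p$-semiartinian, so it admits a composition series $0=V_0\subset V_1\subset\dots\subset V_n=V$ of $U(\mathfrak{g})$-submodules with $1$-dimensional successive quotients. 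Since $\mathfrak{g}\subset U(\mathfrak{g})$, each $V_j$ is $\mathfrak{g}$-invariant, so this chain is exactly a $\mathfrak{g}$-flag of $V$, as desired.
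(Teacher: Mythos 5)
Your proposal is correct and follows the same route as the paper: invoke Example \ref{e.solvlie} to present $U(\mathfrak{g})$ as an Ore-solvable algebra with $\sigma_i=\mathrm{id}$ and $\theta_i=[\,\cdot\,,x_i]$, check the hypotheses of Theorem \ref{t.genlie}, and transfer the conclusion to the finite-dimensional module $V$. The extra details you supply (the verification that $\theta_i(a)\in[A_i,A_i]$ and the passage from p-semiartinian to a $\mathfrak{g}$-flag) are exactly the steps the paper leaves implicit.
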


\begin{proof}
As noted in Example \ref{e.solvlie}, $A = U(\mathfrak{g})$ is an Ore-solvable algebra. Furthermore, the relations $xy = yx + [x,y]$ show that at each step we can pick $\sigma=1$ and $\theta=[x,-]$, which satisfy the conditions of Theorem \ref{t.genlie}. Hence, every finite dimensional simple $A$-module is 1-dimensional and $V$ is $p$-finite (equivalently, it admits a $\mathfrak{g}$-flag).
\end{proof}

\noindent Note also that by Example \ref{e.solvgp}, the Ore datum of the group algebra $A = \KK G$ of a solvable group $G$ satisfies the hypotheses of Theorem \ref{t.genlie}. The proof of the above corollary can then be suitably adapted to recover, at least in the case of a field of characteristic zero, the related \emph{Lie-Kolchin theorem}: if $G$ is a connected, solvable linear algebraic group, then every finite dimensional representation of $G$ has a one dimensional $G$-invariant subspace. We leave the details to the reader.

\subsection{Arbitrary Characteristic and Examples}

\begin{theorem}\label{t.3}
Suppose $\KK$ is an algebraically closed field of arbitrary characteristic. Let $A$ be an Ore-solvable algebra, satisfying the following conditions: 
\begin{enumerate}
\item[(i)] For each $i$, $A_{i-1}$ is generated by a set $S_i$ such that for each $a\in S_i$, either $\theta_i(a)$ is nilpotent or $\theta_i(a) \in [A_{i-1},A_{i-1}]$.
\item[(ii)] For each $i$, the orbit of any 1-dimensional character under the action of the semigroup $\langle\sigma_i\rangle\subset \End(A_{i-1})$ generated by $\sigma_i$ is either trivial or infinite. 
\end{enumerate}
Then the conclusions of Theorem \ref{t.genlie} hold: every finite dimensional simple module is 1-dimensional, and an $A$-module $V$ is p-finite (equivalently, locally finite) if and only if every $x_i$ acts locally finite on $V$.  
\end{theorem}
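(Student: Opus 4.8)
The plan is to prove the first assertion — every finite dimensional simple $A$-module is $1$-dimensional — by induction on the length $n$ of the Ore filtration, in the spirit of the proofs of Theorems \ref{t.genlie} and \ref{t.genlie2}; the second assertion then follows formally, exactly as in those proofs, by combining the first with Proposition \ref{p.fdsimplpf} (which upgrades ``locally finite'' to ``locally p-finite'') and Corollary \ref{c.locfincond} (which detects local finiteness on the single generators $x_i$). So I would fix $n\ge 1$, write $x=x_n$, $\sigma=\sigma_n$, $\theta=\theta_n$, $S=S_n$, and take $V$ a finite dimensional simple $A=A_n$-module. By the inductive hypothesis $V$ contains a $1$-dimensional $A_{n-1}$-submodule $\KK v$, say of character $\lambda$. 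Using the identities $ax^i=\sum_{j\le i}x^jp_{ij}(a)$ valid in the algebra $A_n$ (with the $p_{ij}$ composed from $\sigma,\theta$ and $p_{ii}=\sigma^i$), the subspace $\KK[x]v$ is $A_{n-1}$-stable, hence $A_n$-stable, so $V=\KK[x]v$ by simplicity and we are in the situation of Lemma \ref{l.comp}.

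The core step will be to show, by induction on $i$, that each line $\KK x^iv$ is an $A_{n-1}$-submodule of $V$ of character $\lambda\circ\sigma^i$ (allowing $x^iv=0$). Granting this at stage $i$, with $\mu:=\lambda\circ\sigma^i$ the character of $\KK x^iv$, I would first check that $\mu(\theta(a))=0$ for $a\in S$: by hypothesis (i), either $\theta(a)$ is nilpotent — so its eigenvalue $\mu(\theta(a))$ on the line $\KK x^iv$ vanishes — or $\theta(a)\in[A_{n-1},A_{n-1}]$, and every commutator acts on a $1$-dimensional module as $\mu(bc)-\mu(cb)=\mu(b)\mu(c)-\mu(c)\mu(b)=0$. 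This is the only place hypothesis (i) is used, and — in contrast with the characteristic zero results — it needs no trace argument and no restriction on the characteristic. Then the computation of Lemma \ref{l.comp}(b) goes through essentially verbatim: the relation $a\cdot x^{i+1}v=\mu(\sigma(a))\,x^{i+1}v$ for $a\in S$ forces $\KK x^{i+1}v$ to be stable under the algebra generated by $S$, namely $A_{n-1}$, with character $\mu\circ\sigma=\lambda\circ\sigma^{i+1}$. In particular, writing $k=\dim V$, the set $\{v,xv,\dots,x^{k-1}v\}$ is a basis, $V\cong\bigoplus_{i=0}^{k-1}\KK x^iv$ as an $A_{n-1}$-module with summand characters $\lambda,\lambda\circ\sigma,\dots,\lambda\circ\sigma^{k-1}$, and $\KK x^kv$ is an $A_{n-1}$-submodule of $V$ of character $\lambda\circ\sigma^k$.

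Finally I would split on hypothesis (ii). If the $\langle\sigma\rangle$-orbit of $\lambda$ is trivial, then $A_{n-1}$ acts on all of $V$ through the single character $\lambda$, so an eigenvector $w$ of the operator $x$ on $V$ — which exists since $\KK$ is algebraically closed and $V$ is finite dimensional and nonzero — spans a line stable under $x$ and under $A_{n-1}$, hence a nonzero $A_n$-submodule, whence $V=\KK w$. If instead the orbit of $\lambda$ is infinite, then $\lambda,\lambda\circ\sigma,\dots,\lambda\circ\sigma^{k}$ are pairwise distinct, so the submodule $\KK x^kv$, of character $\lambda\circ\sigma^k$, cannot sit inside $V$ (whose $A_{n-1}$-characters are exactly $\lambda,\dots,\lambda\circ\sigma^{k-1}$) unless $x^kv=0$; but then $x$ annihilates $x^{k-1}v$, so $\KK x^{k-1}v$ is stable under $x$ and $A_{n-1}$, hence a nonzero $A_n$-submodule, forcing $V=\KK x^{k-1}v$. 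Either way $\dim V=1$, which completes the induction and the first assertion, and the second follows as noted.

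I expect the one delicate point to be the infinite-orbit case: one has to push the recursion of Lemma \ref{l.comp}(b) one step past $\dim V$ in order to recognize that $x^kv$ would carry the genuinely new weight $\lambda\circ\sigma^k$, absent from $V$, and thereby read off a $1$-dimensional $A_n$-submodule. A secondary point worth recording explicitly is that nothing above requires $\sigma$ to be an algebra endomorphism: Lemma \ref{l.comp} uses only the bare maps $\sigma,\theta$, and the functions $\lambda\circ\sigma^i$ that arise are algebra characters precisely because they are realized as characters of honest $1$-dimensional submodules, so hypothesis (ii) — which presupposes that $\sigma$ permutes the set of characters — is exactly as strong as is needed.
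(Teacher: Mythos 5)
Your proof is correct and follows the same overall strategy as the paper: induction on the filtration, the eigenvector chain $v,xv,x^2v,\dots$ from Lemma \ref{l.comp}, and a case split on the $\langle\sigma\rangle$-orbit of $\lambda$. Two local steps differ, and both of your variants work. First, to neutralize hypothesis (i) the paper observes that every element of $A_{n-1}$ acts upper-triangularly in the basis $\{x^iv\}$, so commutators act strictly upper-triangularly and are therefore nilpotent, reducing to the ``all $\theta(a)$ nilpotent'' case of Lemma \ref{l.comp}(b); you instead note that an algebra character kills every commutator, so $\mu(\theta(a))=0$ directly --- equally valid and equally characteristic-free. Second, and more substantively, in the infinite-orbit case the paper argues that $V$ would contain infinitely many pairwise non-isomorphic one-dimensional $A_{n-1}$-submodules, a contradiction; that argument tacitly assumes the chain of eigenvectors $v,xv,x^2v,\dots$ never vanishes, whereas it can terminate with $x^jv=0$. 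Your version handles this correctly: you push the recursion one step past the basis, conclude that $x^kv$ would carry the new weight $\lambda\circ\sigma^k$ absent from $V=\bigoplus_{i<k}\KK x^iv$, hence $x^kv=0$, and then $\KK x^{k-1}v$ is an $A_n$-submodule, forcing $\dim V=1$ directly rather than by contradiction. This is a genuine (if small) tightening of the published argument; the only caveat, which you already flag, is the implicit identification of the honest character of the line $\KK x^{i}v$ with the function $\lambda\circ\sigma^{i}$, a point the paper treats at the same level of precision.
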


\begin{proof} 
Proceed again by induction, and for the inductive step, let $A=A_{n-1}, B=A_n, x=x_n, \sigma=\sigma_n, \theta=\theta_n$. Let $V$ be a simple finite dimensional $B$-module. As before, there exists an $A$-eigenvector $v\in V$; using the simplicity of $V$ as a $B$-module, we obtain again a basis $\mathcal{B}=\{v,xv,\dots,x^{k-1}v\}$ of $V$. We note first that if $\theta_i(a)\in [A_{i-1},A_{i-1}]$ for some $a\in A_{i-1}$, then $\theta_i(a)$ is nilpotent. Indeed, using the notation of Lemma \ref{l.comp}, note that with respect to the basis $\mathcal{B}$, every element $a\in A$ is upper triangular. Thus, for $a,b\in A$, we have that $[a,b]$ is strictly upper triangular, and hence nilpotent. 
By the second part of Lemma \ref{l.comp} together with the fact that $V$ is a simple $B$-module, we have $V=\bigoplus\limits_{i=0}^{k-1}\KK x^iv$ as an $A_{n-1}$ module. In particular, this argument shows that if $w\in V$ is any eigenvector of $A$ of weight $\mu$, then $xw$ is also an eigenvector of $A$ with weight $\mu\circ\sigma$, so the simple $A$-module of weight $\mu\circ \sigma$ is also a submodule of $V$. Now, if the orbit of $\sigma$ on $\lambda$ is infinite (that is, if $\lambda\circ\sigma^i\neq \lambda\circ \sigma^j$ for $i\neq j$) it would follow that $V$ contains infinitely many types of non-isomorphic simple modules, which would necessarily form a direct sum inside $V$. This is impossible since $\dim(V)<\infty$. Therefore, hypothesis (b) shows that $\lambda\circ \sigma^i=\lambda$ for all $i$. We can now proceed as before in the proof of Theorem \ref{t.genlie} to pick any $x$-eigenvector $w$ and obtain $V=Bw$. 
The last part also follows as before. 
\end{proof}

\begin{example}[Quantum affine spaces at non-roots of 1]
Consider a quantum affine algebra $A=\KK\langle x_1,\dots,x_n\rangle/\langle x_ix_j-\omega_{ij}x_jx_i\rangle$, where each $\omega_{ij}$ is either equal to $1$ or otherwise is not a root of 1. This is the coordinate algebra of a quantum affine $n$-space. Consider the intermediate subalgebras $A_i$ generated by $x_1,\dots,x_i$. Obviously, this presents $A$ as an iterated Ore extension, and so also as an Ore-solvable algebra. The Ore datum is given by $\sigma_j(x_i)=\omega_{ij}x_i$ for $i<j$ and $\theta_i=0$. If $\lambda:A\rightarrow \KK$ is a 1-dimensional representation of $A_{j-1}$, with $\lambda(x_i)=\alpha_i$, then $\lambda\circ\sigma_j(x_i)=\omega_{ij}\alpha_i$ for $i<j$. Thus, $\lambda\circ\sigma^t=\lambda$ for some fixed $t$ only if $\omega_{ij}^t\alpha_i=\alpha_i$ for all $i<j$. If this is the case, then either $\omega_{ij}=1$, or otherwise, $\alpha_i=0$. But in this case we also get $\lambda\circ\sigma=\lambda$, and thus the orbit of $\langle\sigma\rangle$ on $\lambda$ is trivial. Therefore, hypothesis (ii) of Theorem \ref{t.3} is satisfied, and (i) holds trivially. The theorem then shows that every finite dimensional simple module over this quantum space is 1-dimensional, and a module is locally p-finite (i.e. triangularizable) if and only if each of the $x_i$'s act locally finite. This shows that ``generically" (in terms of their commuting constants), a finite collection of skew commuting triangularizable operators on an arbitrary vector space are simultaneously triangularizable (as long as their skew relations are by non-roots of unity). \\
In particular, this applies to a skew commuting polynomial ring  $\KK\langle x,y\rangle/\langle yx-qxy\rangle$ (quantum  plane) for $q$ such that either $q=1$ or $q$ is a non-root of unity. For a specific example, this recovers the known fact that if $A,B$ are $n\times n$ matrices over an algebraically closed field and $AB=qBA$ with $q$ a non-root of unity or $q=1$, then $A$ and $B$ are simultaneously triangularizable; it also shows that if $A,B$ are triangularizable operators on an arbitrary space such that $AB=qBA$, then they are simultaneously triangularizable.
\end{example}

\noindent The same example as above can be modified in various ways to allow for maps $\theta_i\neq 0$. We give just one such example to illustrate the idea.

\begin{example} 
The algebra $A=\KK\langle x,y,z\rangle/\langle yx-qxy, zx-rxz-[x,y], zy-syz-[x,y]\rangle$ with $q,r,s$ non-roots of unity, satisfies the hypotheses of Theorem \ref{t.3}, and so every finite dimensional simple module is 1-dimensional, and an $A$-module is locally p-finite if and only if each $x,y$ and $z$ acts locally finite on $V$. Equivalently, three triangularizable operators $x,y,z$ on a vector space satisfying these relations are simultaneously triangularizable.
\end{example}

\noindent  Another example is the coordinate algebra of quantum matrices.

\begin{example}[Quantum matrices] \label{e.qmatrices}
Let $\KK$ be an algebraically closed field of characteristic zero, and $q \in \KK^{\times}$ be a non-root of unity. For each natural number $n$, let $[n] := \{ 1,\ldots , n\}$. We let $M_q(m,n)$ denote the quotient of the free algebra on the generators $\{ X_{ij} \mid (i,j) \in [m]\times [n]\}$ by the following relations, which are defined for each pair $i<j$ and $k<l$: 
\begin{enumerate} 
\item $X_{il}X_{ik} = qX_{ik}X_{il}$,
\item $X_{jk}X_{ik} = qX_{ik}X_{jk}$, 
\item $[X_{jk},X_{il}] = 0$, 
\item $[X_{ik},X_{jl}] = (q^{-1}-q)X_{jk}X_{il}$.
\end{enumerate} 
Order the set $[m]\times [n]$ lexicographically, so that $(i,j) \le (k,l)$ if $i< k$ or $i = k$ and $j \le l$. For each $(i,j)$, let $A_{ij}$ denote the subalgebra of $M_q(m,n)$ generated by the elements $\{ X_{kl} \mid (k,l) \le (i,j)\}$. This collection of subalgebras is totally ordered under inclusion, with $A_{ij} \subset A_{kl}$ if and only if $(i,j) \le (k,l)$. The corresponding sequence of subalgebras shows that $M_q(m,n)$ is an iterated Ore extension, and so in particular it is (left and right) Ore-solvable. If for each $i< j$ and $k<l$ we further impose the condition that $X_{jk}X_{il}$ is nilpotent, then by Theorem \ref{t.3}, if $V$ is an $M_q(m,n)$-module, then $V$ is triangularizable if and only if each of the $X_{ij}$ are triangularizable when viewed as endomorphisms of $V$ (i.e. their action on $V$ is locally finite). 
\end{example}

\subsection{A converse statement}

\noindent We have seen how certain conditions on Ore-solvable algebras imply that finite dimensional simple modules are 1-dimensional; on the other hand, we note that every finite dimensional pointed algebra (an algebra such that every simple $A$-module is 1-dimensional) arises this way and so also from a Lie algebra.  We will need the following result of \cite{IS}: if $A$ is a pointed algebra, then $A$ has a subalgebra of codimension $1$. We note this can also be proved directly as follows: if $J$ is the Jacobson radical of $A$, then $A/J\cong \KK^n$ as algebras for some $n$, and $A/J\otimes A/J$ is semisimple. All $J^{k-1}/J^k$ are $A$-bimodules that are semisimple as bimodules (they are also $A/J$-bimodules) and whose simple components are 1-dimensional $A/J$-bimodules and $A$-bimodules. Thus, as an $A$-bimodule, $A$ is p-semiartinian, and hence we can find an ideal (equivalently, a sub-bimodule) $I$ of $A$ of codimension 2 ($A$ has a composition series as $A$-bimodules with 1-dimensional factors). Then, the subspace $B=\KK 1+I$ is obviously a subalgebra of $A$ of codimension 1. \\
Furthermore, we now note that such an extension of algebras $B\subseteq A$ with $B$ of codimension $1$ can be presented in a special way. Let $x\in A\setminus B$. Since $x\in Bx$, we then have $B\subsetneq Bx+B\subseteq A$ and so $Bx+B=A$, and similarly $xB+B=A$. Hence, for $b\in B$, write $b=\lambda 1+h$ (uniquely with $\lambda\in\KK, h\in I$) and note that $xb=x+xh$ and $bx=x+hx$ so $xb=bx+[x,h]$. Thus, setting $\sigma={\rm Id}_B:B\rightarrow B$ and $\theta:B\rightarrow B$ to be the map $\theta(b)=[x,h]$ where $b\in B$ is, as above, uniquely written as $b=\lambda\cdot 1+h$, $h\in I$, then $xb=\sigma(b)x+\theta(b)$. We can summarize this in the following.

\begin{theorem}
Let $A$ be a finite dimensional pointed algebra over an arbitrary field $\KK$. Then $A$ is Ore-solvable with special Ore relations, which are of Lie type. More specifically, there is a sequence $\KK=A_0\subset A_1\subset \dots\subset A_n=A$ of subalgebras and $x_i\in A_i \setminus A_{i-1}$, such that $\dim(A_i/A_{i-1})=1$, and such that $\sigma_i=\sigma_i'={\rm Id}_{A_{i-1}}$ and $\theta_i(a)=-\theta_i'(a)=[a,x_i]$ provide an Ore datum for this sequence.  
\end{theorem}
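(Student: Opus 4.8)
The plan is to combine the two facts established in the paragraph immediately preceding the theorem statement with an induction on $\dim_\KK(A)$. The preceding discussion already does essentially all the work, so the proof is mostly a matter of assembling it cleanly. First I would recall the key input: by the cited result of \cite{IS} (with the alternative self-contained argument sketched above via the bimodule composition series of $A$), a finite dimensional pointed algebra $A$ has a subalgebra $B$ of codimension $1$, and moreover one may take $B = \KK 1 + I$ for a two-sided ideal $I$ of codimension $2$ in $A$. Fixing any $x = x_n \in A \setminus B$, one checks $Bx + B = xB + B = A$ (since $x \in Bx \cap xB$ forces these intermediate subspaces to be strictly larger than $B$, hence all of $A$), so $A$ is generated by $B$ and $x_n$ with the required one-sided Ore containments.

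Next I would verify that $B$ is itself a finite dimensional pointed algebra, so that the induction applies. This is immediate: $B = \KK 1 + I$ is a finite dimensional unital algebra, and $I$ is its Jacobson radical (it is a nilpotent ideal since $A$ is pointed hence $I$, being a sub-bimodule of the radical $J$ of $A$ of finite codimension, is nilpotent; and $B/I \cong \KK$), so $B$ is local with residue field $\KK$, in particular pointed. Then I would record the explicit Ore datum at the top step: writing each $b \in B$ uniquely as $b = \lambda \cdot 1 + h$ with $\lambda \in \KK$, $h \in I$, one has $xb = x + xh$ and $bx = x + hx$, whence $xb = bx + [x,h]$ and $bx = xb + [h,x]$. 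So setting $\sigma_n = \sigma_n' = {\rm Id}_{B}$ and $\theta_n(b) = -\theta_n'(b) = [b,x_n]$ (noting $\theta_n(b) = [h,x_n] \in I \subseteq B$, so these genuinely map $B$ to $B$) gives $x_n b = \sigma_n(b) x_n + \theta_n(b)$ and $b x_n = \sigma_n'(b) x_n + \theta_n'(b)$, i.e.\ valid left and right Ore data for the extension $B \subseteq A$.

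Finally I would run the induction. By the inductive hypothesis applied to $B$ (which has strictly smaller dimension), there is a chain $\KK = A_0 \subset A_1 \subset \dots \subset A_{n-1} = B$ with $\dim(A_i/A_{i-1}) = 1$ and elements $x_i \in A_i \setminus A_{i-1}$ for $i < n$ whose Ore data are of the stated Lie type $\sigma_i = \sigma_i' = {\rm Id}_{A_{i-1}}$, $\theta_i(a) = -\theta_i'(a) = [a,x_i]$. Appending $A_n = A$ and $x_n$ with the datum constructed in the previous step extends this to the full chain for $A$, with all the required properties; the base case $\dim A = 1$ is trivial. I do not expect any serious obstacle here — the theorem is a packaging of the two established lemmas. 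The only point that requires a moment's care is confirming that the codimension-$1$ subalgebra produced can be chosen of the special form $\KK 1 + I$ (so that the resulting $\theta_i$ land in $A_{i-1}$ and the Lie-type formula $\theta_i(a) = [a,x_i]$ holds on all of $A_{i-1}$, not merely on a generating set), which is exactly why the bimodule-composition-series argument — yielding an \emph{ideal} $I$ of codimension $2$ rather than just some codimension-$1$ subalgebra — is the right tool to invoke.
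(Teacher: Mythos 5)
Your overall architecture is exactly the paper's: the result is obtained by induction on $\dim A$, using the codimension-$2$ ideal $I$ from the bimodule composition series to produce $B=\KK 1+I$ of codimension $1$, and reading off the Lie-type Ore datum from $xb-bx=[x,h]\in I$. That part, including the verification that $Bx+B=xB+B=A$ and that $\theta$ actually lands in $B$, is correct and well assembled. However, there is one step whose justification fails: your claim that $B$ is pointed because ``$I$, being a sub-bimodule of the radical $J$ of $A$, is nilpotent, and $B/I\cong\KK$, so $B$ is local.'' The ideal $I$ is a codimension-$2$ term of a bimodule composition series of all of $A$, not of $J$; whenever $\dim(A/J)\geq 3$ it cannot be contained in $J$. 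Concretely, for $A=\KK\times\KK\times\KK$ (pointed, $J=0$) one gets $I=\KK e_1$ for a primitive idempotent $e_1$, which is not nilpotent, and $B=\KK 1+\KK e_1\cong\KK\times\KK$ is not local. So the intermediate assertions ``$I$ is nilpotent'' and ``$B$ is local'' are false in general, and your derivation of pointedness of $B$ collapses.

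The conclusion you need for the induction --- that $B$ is pointed --- is nevertheless true, but requires a different argument: $J(A)\cap B$ is a nilpotent ideal of $B$, and $B/(J(A)\cap B)$ embeds as a unital subalgebra of $A/J(A)\cong\KK^n$; every element of such a subalgebra has a minimal polynomial splitting into distinct linear factors over $\KK$, so the subalgebra is isomorphic to $\KK^m$ for some $m$. Hence $J(B)=J(A)\cap B$ and $B/J(B)\cong\KK^m$, so every simple $B$-module is $1$-dimensional. (In other words, any unital subalgebra of a finite dimensional pointed algebra is pointed; locality is not needed and not available here --- it only appears in the separate local case treated in Proposition \ref{FinDimLCA}, where $I$ is chosen inside $J$.) With this repair your induction goes through verbatim.
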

\begin{proof} 
One simply proceeds inductively using the previous observation.
\end{proof}

We also remark that we may interpret $A$ as a Lie algebra via the commutator bracket $[-,-]$; since $A$ is pointed, its regular representation is triangularizable, and we may thus embed $A$ into the triangular matrix algebra of some size. This is also a Lie algebra embedding, and thus $A$ is a solvable Lie algebra, which provides an alternate way to obtain that $A$ is Ore-solvable with special Ore relations.  

The following question may be interesting to investigate.

\begin{question}
In what other situations, beyond finite dimensional pointed algebras, can one say  that if an algebra is triangularizable, then it is an (left) Ore-solvable algebra? 
\end{question}




\section{Nil algebras and Strict Upper-Triangularizability} 

\subsection{Nilpotency Conditions and Strict Local Finiteness}

The results of the previous section can be regarded as non-strict triangularization results. We now give ``strict" triangularization results which generalize several known such results, and which are related to nilpotent conditions on the algebra. We again use a representation-theoretic interpretation of strict triangularization, which often has to do with the existence of a single isomorphism type of simple module. We keep the same notation as in the previous section for Ore-solvable algebras and Ore datum. The next theorem is a simultaneous strict triangularization theorem, and does not require any knowledge or assumption on the $\sigma_i$'s from the Ore datum. 


\begin{theorem}\label{t.na1}
Let $A$ be a left Ore-solvable algebra, $V$ an $A$-module and assume that the Ore datum of $A$ satisfies the following two conditions: 
\begin{enumerate}
\item Each algebra $A_i$ from the filtration of $A$ is generated by a set $S_i$, such that for each $a\in S_{i-1}$, $\theta_i(a)$ acts locally nilpotent on $V$; and
\item each $x_i$ acts locally nilpotent on $V$.  
\end{enumerate}
Then $V$ is locally p-finite homogeneous, and every simple subquotient of $V$ has character $\mu:A\rightarrow \KK$ which satisfies $\mu(x_i)=0$ for all $i$.   
\end{theorem}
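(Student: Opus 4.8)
The plan is to argue by induction on the length $n$ of the Ore filtration $\KK = A_0 \subset A_1 \subset \cdots \subset A_n = A$, reducing at each step to the key computational Lemma~\ref{l.comp}. First I would handle the base case: for $n=0$ there is nothing to prove, and for $n=1$ we have $A = \KK[x_1]$ with $x_1$ locally nilpotent on $V$, so every cyclic submodule $\KK[x_1]w$ is finite-dimensional with a single composition factor, namely the trivial one on which $x_1$ acts as $0$; hence $V$ is locally p-finite homogeneous with the asserted character. For the inductive step, write $A = B$, $A_{n-1} = A'$, $x = x_n$, $\sigma = \sigma_n$, $\theta = \theta_n$, so $B = A' \cdot \KK[x]$ with $a x = x\sigma(a) + \theta(a)$. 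By induction, $V$ is locally p-finite homogeneous as an $A'$-module, with every simple $A'$-subquotient having character $\mu'$ satisfying $\mu'(x_i) = 0$ for $i < n$; in particular every simple $A'$-subquotient is the trivial one (all the generators from $S_{n-1}$, being either among the $x_i$ or products thereof... — more precisely, since $A'$ is generated by $S_{n-1}$ and the characters of the simple subquotients are forced, they are all isomorphic).

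Next I would show $V$ is locally finite as a $B$-module: given $w \in V$, the submodule $A'w$ is finite-dimensional by the inductive hypothesis, and since $x$ is locally nilpotent, $\KK[x](A'w)$ is again finite-dimensional; iterating the relation $A'x \subseteq xA' + A'$ shows $Bw = A'\KK[x]w$ is finite-dimensional — this is essentially the argument of Theorem~\ref{t.1st} combined with local nilpotence of $x$. So it suffices to prove the claim for a single finite-dimensional $B$-module $W$ (namely $W = Bw$), and then that every simple $B$-subquotient of such a $W$ is the trivial character $\mu$ with $\mu(x_i) = 0$ for all $i$. Since $W$ is finite-dimensional and $x$ is nilpotent on $W$, $\ker(x|_W) \neq 0$; I want to find inside it an $A'$-eigenvector. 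Because $\ker(x|_W)$ is $A'$-invariant (here one uses that $\sigma$ maps into $A'$, so $a\cdot xw = 0$ for $w \in \ker x$ since $\theta(a)$... — actually one must be slightly careful: $a \cdot (xw) = x\sigma(a)w + \theta(a)w$; if $xw = 0$ this is $\theta(a)w$, which need not vanish, so $\ker(x|_W)$ need not be $A'$-invariant). Instead I would take a simple $A'$-submodule $\KK v$ of $W$ (which exists and is $1$-dimensional by the inductive hypothesis), with weight $\lambda$ where $\lambda(x_i) = 0$ for $i < n$; then $\lambda = $ the trivial character on $S_{n-1}$. By Lemma~\ref{l.comp}(a), the $\KK[x]$-submodule generated by $v$ has $\KK$-basis $v, xv, \dots, x^{m-1}v$ (finite since $x$ is nilpotent), and on it each $a \in S_{n-1}$ acts upper-triangularly; since $\theta_n(a)$ acts locally nilpotently on $V$ and $\lambda$ is trivial, $\lambda(\theta_n(a)) = 0$ and Lemma~\ref{l.comp}(b) applies to this submodule, giving that it is a direct sum of $1$-dimensional $A'$-modules of weights $\lambda \circ \sigma^j = \lambda$ (since $\lambda$ is trivial on $S_{n-1}$, hence $\lambda \circ \sigma^j$ agrees with $\lambda$ on $S_{n-1}$, hence equals $\lambda$ as these generate $A'$). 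So $x$ acts as a nilpotent operator commuting (on this submodule) with the trivial $A'$-action, and one extracts a common eigenvector $w_0$ with $a w_0 = \lambda(a) w_0$ and $x w_0 = 0$, i.e. $w_0$ spans a $1$-dimensional $B$-submodule with character $\mu$ where $\mu|_{A'} = \lambda$ (trivial on $S_{n-1}$) and $\mu(x_n) = 0$. Quotienting $W$ by this submodule and inducting on $\dim W$ finishes the argument that every simple $B$-subquotient of $W$ has this character $\mu$, so $W$ (hence $V$) is $\mu$-homogeneous p-finite, and by Proposition~\ref{p.strictTR} this is exactly strict triangularizability; the condition $\mu(x_i) = 0$ for all $i$ is what we tracked throughout.

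The main obstacle I anticipate is the bookkeeping at the inductive step around which subspace is $A'$-invariant: one cannot simply intersect $\ker(x|_W)$ with an $A'$-eigenspace, so the argument must go through Lemma~\ref{l.comp} applied to the $\KK[x]$-span of an $A'$-eigenvector to re-diagonalize the $A'$-action and only then find a common null vector for $x$. A secondary point requiring care is verifying that the inductive hypothesis genuinely forces the simple $A'$-subquotients to all be the single trivial character (so that $\lambda$ is trivial on $S_{n-1}$ and the weights $\lambda\circ\sigma^j$ collapse) — this uses that $A'$ is itself left Ore-solvable with the induced data satisfying hypotheses (1) and (2), and that the generating sets $S_i$ for $i \le n-1$ serve as witnesses; the hypothesis that $\theta_i(a)$ acts locally nilpotently for $a \in S_{i-1}$ and each $x_i$ acts locally nilpotently is exactly what is inherited.
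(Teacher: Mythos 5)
Your proposal is correct and follows essentially the same route as the paper: induction on the length of the Ore filtration, local finiteness from hypothesis (2) via Theorem \ref{t.1st}, and Lemma \ref{l.comp} applied to the $\KK[x_n]$-span of an $A_{n-1}$-eigenvector to extract a one-dimensional $A_n$-submodule killed by $x_n$ (the paper works directly with a simple subquotient $W$, which by simplicity \emph{equals} that span, rather than peeling one-dimensional submodules off a cyclic finite-dimensional module, but this is only an organizational difference). One small repair: your justification that the weights $\lambda\circ\sigma^j$ all collapse to $\lambda$ --- namely that ``$\lambda$ is trivial on $S_{n-1}$, hence $\lambda\circ\sigma^j$ agrees with $\lambda$ on $S_{n-1}$'' --- does not follow as stated ($S_{n-1}$ is an arbitrary generating set on which $\lambda$ need not vanish, and $\lambda(\sigma^j(a))$ need not equal $\lambda(a)$); the correct reason, which is how the paper argues homogeneity, is that each $x_i$ with $i<n$ acts nilpotently on the finite-dimensional module and hence as $0$ on each one-dimensional $A_{n-1}$-constituent $\KK x^jv$, forcing all these characters to be the unique character vanishing on all the $x_i$ --- or one can sidestep the issue entirely by taking the top nonzero power $x^mv$, which is automatically an $A_{n-1}$-eigenvector annihilated by $x_n$.
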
 

\begin{proof} 
Again we proceed by induction on $n$ to show that $V$ is locally p-finite. If $A = A_0 = \KK$ then the result is clear, so suppose $A = A_n$ for some $n>1$.  
By condition (2) and Theorem \ref{t.1st}, we have that $V$ is locally $A$-finite, and we need to show it is locally p-finite homogeneous. 
Let $W$ be a simple subquotient of $V$. It satisfies the same properties (1) and (2) of the action of $A$ as $V$ does.
By induction, $W$ contains a one-dimensional simple $A_{n-1}$-module, spanned by an eigenvector $v$. Let $\lambda : A_{n-1} \rightarrow \KK$ be the associated algebra character, and for simplicity, let us denote $x=x_n$, $\sigma=\sigma_n$ and $\theta=\theta_n$. Let $Y = \operatorname{span}_{\KK}\{ x^iv \mid i \geq 0 \}$ be the $\KK[x]=\KK [x_n]$-module generated by $v$, which is finite dimensional. By Lemma \ref{l.comp}, it is also an $A_n$-submodule, and so $Y=W$ by the simplicity of $W$. Applying  this Lemma again, we observe that $W=\bigoplus\limits_i \KK x^iv$, a direct sum of 1-dimensional $A_{n-1}$modules; since $x$ acts nilpotently on $W$, we must have $x^{n+1}v=0$ and $x^{n}v\neq 0$ for some $n$. But then $w=x^nv$ is both an $A_{n-1}$ and $x$-egienvector, and so $\KK x$ is a $B$-submodule of $W$. Thus $W=\KK v$ is 1-dimensional. Furthermore, we note that the character $\mu:W\rightarrow \KK$ of $W$ must have $\mu(x_i)=0$, since $x_i$ acts nilpotently on any finite dimensional subquotient of $V$ (and so on $W$). Since $A_n$ is generated by the $x_i$'s as an algebra, this completely determines $\mu$ and $W$, and proves that $V$ is homogeneous too. 
\end{proof}


\noindent We note that the above can be interpreted as a strict simultaneous triangularization result. An operator $x\in \End(V)$ is strictly upper triangularizable if and only if it is locally nilpotent, and we thus restate the above theorem in this form:

\begin{corollary}
Let $x_1,\dots,x_n\in \End(V)$ be linear endomorphisms of a vector space. Let $A_i$ be the algebra generated by $x_1,\dots,x_i$ and assume $A=A_n$ is Ore-solvable (with $x_ia=\sigma_i(a)x_i+\theta_i(a)$, for $a\in A_{i-1}$, $\sigma_i,\theta_i:A_{i-1}\rightarrow A_{i-1}$) satisfying condition (1) of the previous theorem. If $x_1,\dots,x_n$ are strictly triangularizable then they are simultaneously strictly triangularizable. 
\end{corollary}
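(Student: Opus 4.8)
The plan is to deduce this directly from Theorem~\ref{t.na1} together with the module-theoretic description of strict triangularization in Proposition~\ref{p.strictTR}; the corollary is essentially a restatement of that theorem in the language of operators, so almost all the work has already been done.

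First I would translate the hypotheses into the setting of Theorem~\ref{t.na1}. Set $X=\{x_1,\dots,x_n\}\subset\End(V)$ and let $A=A_n=\KK\{X\}$, which by assumption is left Ore-solvable with the given Ore datum $(x_i,\sigma_i,\theta_i)$; condition~(1) of Theorem~\ref{t.na1} is part of the hypothesis. For condition~(2): since each $x_i$ is strictly triangularizable as a single operator, and a single operator on $V$ is strictly upper triangularizable precisely when it is locally nilpotent (as recalled just before the corollary), each $x_i$ acts locally nilpotently on $V$. Hence both hypotheses of Theorem~\ref{t.na1} are satisfied.

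Now Theorem~\ref{t.na1} gives that $V$ is locally p-finite homogeneous, and that every simple subquotient of $V$ has character $\mu:A\rightarrow\KK$ with $\mu(x_i)=0$ for all $i$. In particular $V$ is p-semiartinian and homogeneous, and its (unique up to isomorphism) simple submodule $S$ is one-dimensional, say $S=\KK w$, with $x_i\cdot w=\mu(x_i)w=0$ for every $i$. This is exactly the hypothesis of Proposition~\ref{p.strictTR} for the generating set $X$, so $X$ is strictly triangularizable; that is, $x_1,\dots,x_n$ are simultaneously strictly triangularizable.

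There is essentially no obstacle here beyond bookkeeping: the only point needing a word is that the output of Theorem~\ref{t.na1} (homogeneous, locally p-finite, socle character annihilating all $x_i$) matches the input required by Proposition~\ref{p.strictTR} (homogeneous p-semiartinian, unique simple submodule killed by $X$), which it does immediately, since locally p-finite modules are p-semiartinian and the socle is among the simple subquotients. If one preferred a self-contained argument, one could instead directly unwind the composition series produced in the proof of Theorem~\ref{t.na1} into a single well-ordered basis of $V$ adapted simultaneously to all the $x_i$, but routing through Proposition~\ref{p.strictTR} is cleaner.
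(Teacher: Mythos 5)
Your proof is correct and follows essentially the same route as the paper: translate strict triangularizability of each $x_i$ into local nilpotency, apply Theorem~\ref{t.na1} to get that $V$ is locally p-finite homogeneous with socle character killing all the $x_i$, and convert back via Proposition~\ref{p.strictTR}. Your write-up is in fact more explicit than the paper's one-line proof, which merely invokes the equivalence between (strict) triangularizability and the p-semiartinian/locally p-finite module conditions.
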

\begin{proof} 
 As noted before, $A$ is triangularizable in $\End(V)$ if and only if $V$ is p-semiartinian. Equivalently, $V$ is locally p-finite (since $A$ is Noetherian).  
\end{proof}

\noindent In particular, the above also recovers the case of commuting or even skew commuting operators on an arbitrary vector space. 

\begin{corollary}
Let $x_1,\dots,x_n\in \End(V)$ be skew-commuting operators on some vector space $V$, so $x_ix_j=\lambda_{ij}x_jx_i$. If $x_i$ are strictly upper triangularizable (equivalently, they act locally nilpotent), then they are simultaneously strictly upper triangularizable.
\end{corollary}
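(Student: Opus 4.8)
The plan is to reduce the statement about skew-commuting operators to the previous corollary by exhibiting the ambient algebra as a left Ore-solvable algebra satisfying condition (1) of Theorem \ref{t.na1}. First I would set $A_i$ to be the subalgebra of $\End(V)$ generated by $x_1,\dots,x_i$, with $A_0=\KK$, and check that this is a left Ore-solvable chain: the skew-commutation relations $x_i x_j=\lambda_{ij}x_j x_i$ (for $j<i$) say precisely that $x_i a = \sigma_i(a) x_i$ for every monomial $a$ in $x_1,\dots,x_{i-1}$, where $\sigma_i$ is the algebra endomorphism of $A_{i-1}$ scaling $x_j$ by $\lambda_{ij}$; hence $x_i A_{i-1}\subseteq A_{i-1}x_i$, so in particular $x_i A_{i-1}\subseteq A_{i-1}x_i+A_{i-1}$, and $A_i$ is generated by $A_{i-1}$ and $x_i$. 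This identifies the Ore datum as $\sigma_i$ as above and $\theta_i=0$ for every $i$.

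Next I would observe that condition (1) of Theorem \ref{t.na1} (equivalently, of the preceding corollary) is automatic here: we may take $S_i=\{x_1,\dots,x_i\}$ as a generating set for $A_i$, and since $\theta_i\equiv 0$, the operator $\theta_i(a)=0$ acts locally nilpotently on $V$ trivially for every $a\in S_{i-1}$. The hypothesis that the $x_i$ are strictly upper-triangularizable translates, via the remark just before the previous corollary (an operator is strictly upper-triangularizable iff it is locally nilpotent), into the statement that each $x_i$ acts locally nilpotently on $V$, which is condition (2). Then Theorem \ref{t.na1} applies and shows $V$ is locally p-finite homogeneous; equivalently, by Proposition \ref{p.strictTR} together with the fact that every simple subquotient has character vanishing on all $x_i$, the algebra $A$ is strictly triangularizable in $\End(V)$, i.e. $x_1,\dots,x_n$ are simultaneously strictly upper-triangularizable. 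Alternatively, one can simply invoke the previous corollary directly once the Ore-solvable structure and condition (1) are verified.

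I do not expect a serious obstacle: the only point needing a moment's care is the direction of the Ore relation (left versus right), since the statement uses "left Ore-solvable" — one must order the generators so that $x_i$ moves past the lower-indexed ones, and rewrite $x_i x_j=\lambda_{ij}x_j x_i$ as $x_i x_j = \lambda_{ij} x_j x_i$ with $j<i$ to land in the form $x_i a\in A_{i-1}x_i+A_{i-1}$ required by the definition. One should also note that $\sigma_i$ is genuinely a well-defined algebra endomorphism of $A_{i-1}$ (it is the restriction of conjugation by $x_i$, or more precisely the unique endomorphism sending $x_j\mapsto\lambda_{ij}x_j$), though since Theorem \ref{t.na1} requires no hypothesis whatsoever on the $\sigma_i$, even this is not strictly needed — only that the maps $\sigma_i,\theta_i$ exist. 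Everything else is a direct citation of the results already established.

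\begin{proof}
Order the operators as $x_1,\dots,x_n$ and let $A_i$ be the subalgebra of $\End(V)$ generated by $x_1,\dots,x_i$, with $A_0=\KK$. For $j<i$ the relation $x_ix_j=\lambda_{ij}x_jx_i$ gives $x_ia\in A_{i-1}x_i$ for every $a\in A_{i-1}$ (check this on monomials and extend linearly), and $A_i$ is generated by $A_{i-1}$ and $x_i$; hence $A=A_n$ is a left Ore-solvable algebra, with Ore datum given by the endomorphism $\sigma_i$ of $A_{i-1}$ determined by $x_j\mapsto\lambda_{ij}x_j$ and $\theta_i=0$. Taking $S_i=\{x_1,\dots,x_i\}$ as a generating set of $A_i$, condition (1) of the preceding corollary holds trivially since $\theta_i(a)=0$ acts locally nilpotently on $V$ for all $a\in S_{i-1}$. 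By hypothesis each $x_i$ is strictly upper-triangularizable, i.e.\ locally nilpotent on $V$, so condition (2) of Theorem \ref{t.na1} holds as well. The preceding corollary now yields that $x_1,\dots,x_n$ are simultaneously strictly upper-triangularizable.
\end{proof}
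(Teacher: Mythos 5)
Your proposal is correct and is essentially the argument the paper intends: the corollary is stated as an immediate specialization of the preceding corollary (hence of Theorem \ref{t.na1}), obtained by taking $A_i=\KK\langle x_1,\dots,x_i\rangle$ with $\theta_i=0$, so condition (1) is vacuous and condition (2) is the hypothesis of local nilpotency. Your remark on checking the left/right direction of the Ore relation is the only point of care, and it is handled correctly since the skew-commutation relations give both containments.
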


\noindent We now note how the above theorem  generalizes Engel's theorem on finite dimensional Lie algebras. Recall that the \emph{lower central series} of a Lie algebra $(\mathfrak{g}, [\cdot , \cdot ])$ is defined recursively via $\mathfrak{g}_1 = [\mathfrak{g},\mathfrak{g}]$, and $\mathfrak{g}_{n+1} = [\mathfrak{g}_n,\mathfrak{g}]$ for $n \geq 1$. Recall that $\mathfrak{g}$ is \emph{nilpotent} if $\mathfrak{g}_n = 0$ for some finite $n$ and $\mathfrak{g}$ is \emph{nil} if each $x \in \mathfrak{g}$ is \emph{ad-nilpotent}, i.e. the linear endomorphism $[x, -] : \mathfrak{g} \rightarrow \mathfrak{g}$ is nilpotent. This means that $U(\mathfrak{g})$ acts on the vector space $V=\mathfrak{g}$ such that each $x\in\mathfrak{g}\subset U(\mathfrak{g})$ acts as a nilpotent operator on $V$. 

\begin{remark}
Let $\mathfrak{g}$ be a finite dimensional Lie algebra, and let $V$ be a finite dimensional $\mathfrak{g}$-module (equivalently, $U(\mathfrak{g})$-module). Assume that each $x\in \mathfrak{g}$ is nilpotent on $V$; then Engel's theorem states that the elements of $\mathfrak{g}$ are simultaneously strictly upper triangularizable in $\End(V)$. We observe now that $U(\mathfrak{g})$ is exactly the type of Ore-solvable algebra described in Theorem \ref{t.na1}, and hence Engel's theorem can be recovered from that result. (more precisely, the ideas of the proof of Engel's theorem  show that $U(\mathfrak{g})$ is such an algebra).
Indeed, by the (usual) proof of Engel's theorem (see \cite{H}),
 reduce first to the case when $\mathfrak{g}\subset {\rm gl}(V)$ and then observe that the hypothesis implies that the ad-action of $\mathfrak{g}$ on $\mathfrak{g}\subset {\rm gl}(V)$ is nilpotent. Now we can proceed inductively as in that proof to show that $U(\mathfrak{g})$ satisfies the hypotheses of Theorem \ref{t.na1}: consider a maximal subalgebra $0\neq \mathfrak{h}$ of $\mathfrak{g}$ (which exists), and apply the inductive hypothesis for $U(\mathfrak{h})$ and the $U(\mathfrak{h})$-module $\mathfrak{g}/\mathfrak{h}$; by Theorem \ref{t.na1}, this module is p-finite homogeneous, so we can find a 0-eigenvector $\overline{x}$ for $\mathfrak{h}$. This means $x\in \mathfrak{g}$ is such that $[\mathfrak{h},x]\subset \mathfrak{h}$ and maximality also shows that $\mathfrak{h}\oplus \KK x=\mathfrak{g}$. The relation $xa=ax+[x,a]$ for $a\in \mathfrak{h}$ with $[x,a]\in\mathfrak{h}$ shows that $U(\mathfrak{h})\subset U(\mathfrak{g})$ is an Ore extension with $\sigma={\rm id}$ and $\theta(a)=[x,a]$ such that $\theta(a)$ is nilpotent on $V$ for all $a\in \mathfrak{h}$. Thus the conditions of Theorem \ref{t.na1} are satisfied.
\end{remark}

\noindent  
Motivated by the above theorem, one might ask what can be said about algebras defined by relations such that every module satisfies the conditions of Theorem \ref{t.na1} (i.e. nilpotency relations). In that case, as we note, the algebra is finite dimensional; this is why Theorem \ref{t.na1} is formulated in terms of a module with locally finite actions, instead of in terms of given algebra relations. 


\begin{proposition}\label{p.OreNilAlg}
Let $A$ be 
a left Ore-solvable algebra whose Ore datum $(x_i,\sigma_i,\theta_i)_i$ satisfies the following conditions: 
\begin{enumerate}
\item[(i)] There is a set $S_i$ of generators of $A_i$ such that for each $a\in S_{i-1}$, $\theta_i(a)$  is nilpotent.
\item[(ii)] $x_i$ is nilpotent for each $i$. 
\end{enumerate}
Then $A$ is a finite dimensional local algebra.
\end{proposition}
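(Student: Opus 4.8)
The plan is to prove the statement by induction on the length $n$ of the Ore filtration $\KK = A_0 \subset A_1 \subset \dots \subset A_n = A$, establishing simultaneously that each $A_i$ is finite dimensional and that $A_i$ is local — equivalently, that $A_i$ acts on every module in a strictly triangularizable fashion, so that the only simple $A_i$-module is the trivial one. For the base case $A_0 = \KK$ this is clear. For the inductive step, write $x = x_n$, $\sigma = \sigma_n$, $\theta = \theta_n$, so that $A = A_n$ is generated by the finite-dimensional local algebra $A_{n-1}$ together with the single nilpotent element $x$, subject to $ax = x\sigma(a) + \theta(a)$ for $a \in A_{n-1}$.

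First I would show $A$ is finite dimensional. Since $A_{n-1}x \subseteq xA_{n-1} + A_{n-1}$, every element of $A$ can be written as a sum $\sum_{j \geq 0} x^j a_j$ with $a_j \in A_{n-1}$; thus $A = \sum_{j \geq 0} x^j A_{n-1}$, and it suffices to bound the powers of $x$ that occur. Here is where condition (ii) enters: $x$ is nilpotent as an element of $A$, say $x^N = 0$ in $A$. Then $A = \sum_{j=0}^{N-1} x^j A_{n-1}$ is a finite sum of finite-dimensional spaces (using the inductive hypothesis that $\dim_\KK A_{n-1} < \infty$), hence $\dim_\KK A < \infty$.

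Next I would show $A$ is local, i.e. that $A/\mathrm{rad}(A) \cong \KK$. The cleanest route is representation-theoretic: it is enough to show that every simple $A$-module $W$ is the trivial $1$-dimensional module on which all of $A_{n-1}$ and $x$ act as scalars, in fact as $0$. Since $A$ is finite dimensional, any simple $W$ is finite dimensional, and the hypotheses of Theorem~\ref{t.na1} are satisfied by the $A$-module $W$: indeed condition~(i) of the present Proposition gives generators $S_i$ of $A_i$ with $\theta_i(a)$ nilpotent in $A_i$, hence acting nilpotently (a fortiori locally nilpotently) on $W$, and condition~(ii) gives that each $x_i$ is nilpotent in $A$, hence acts locally nilpotently on $W$. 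Theorem~\ref{t.na1} then yields that $W$ is locally p-finite homogeneous with every simple subquotient having character $\mu$ satisfying $\mu(x_i) = 0$ for all $i$; since $W$ is itself simple, $W$ is $1$-dimensional and, as $A$ is generated by the $x_i$, the character $\mu$ is forced to be the augmentation $x_i \mapsto 0$. Thus $A$ has a unique simple module, which is $1$-dimensional, so $A/\mathrm{rad}(A) \cong \KK$ and $A$ is local. (Alternatively one could argue directly: $A_{n-1}$ local means $\mathrm{rad}(A_{n-1})$ has codimension $1$ with $A_{n-1}/\mathrm{rad}(A_{n-1}) = \KK$, and one checks that the two-sided ideal of $A$ generated by $\mathrm{rad}(A_{n-1})$ together with $x$ is nilpotent — using that $x$ is nilpotent and that $\theta$ maps the generators into nilpotents, so that Lemma~\ref{l.comp}-type triangularization applies — and has codimension $1$.)

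The main obstacle I anticipate is the passage from "$x$ is nilpotent and $\theta(a)$ is nilpotent for generators $a$" to control over the whole algebra $A$: a priori nilpotency of the generators does not obviously propagate to products, and the skew relation $ax = x\sigma(a) + \theta(a)$ mixes $x$ with $A_{n-1}$ in a way that need not be compatible with any obvious grading. Routing the argument through Theorem~\ref{t.na1} applied to simple modules sidesteps this difficulty, since that theorem already packages exactly the needed triangularization (its proof uses Lemma~\ref{l.comp} to handle precisely this interaction), reducing the local claim to the statement that $A$ has a unique simple module. So the only genuinely new content here is the finite-dimensionality bound, which as noted above is immediate once $x^N = 0$ is invoked.
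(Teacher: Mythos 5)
Your proof is correct, and the core of it — deducing locality by feeding an arbitrary (simple) module into Theorem~\ref{t.na1}, using that a nilpotent element of $A$ acts nilpotently, hence locally nilpotently, on every module — is exactly what the paper does. Where you diverge is in how finite-dimensionality is extracted. The paper also applies Theorem~\ref{t.na1} to the regular module: every $A$-module, in particular ${}_AA$, is locally p-finite, hence $A$ is semiartinian, and combined with the Noetherianity of Ore-solvable algebras this forces $A$ to be artinian and finite dimensional. You instead give a direct inductive dimension count: the left Ore relation $ax = x\sigma(a)+\theta(a)$ yields the normal form $A_n=\sum_{j\ge 0}x^jA_{n-1}$ (which the paper itself records in Section~4 in the non-unital setting), and $x^N=0$ truncates the sum, so $\dim A_n \le N\dim A_{n-1}<\infty$. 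Your route is more elementary and self-contained — it avoids invoking the Noetherian theorem and makes the bound on $\dim A$ explicit — at the cost of running a separate induction; the paper's route is shorter because Theorem~\ref{t.na1} does all the work in one application. Both are valid; just make sure, in your version, to state explicitly that the truncated filtration $\KK=A_0\subset\cdots\subset A_{n-1}$ with the restricted Ore datum satisfies hypotheses (i) and (ii), so that the inductive hypothesis genuinely applies to $A_{n-1}$ (it does, since nilpotency of $x_i$ and of $\theta_i(a)$ is an element-wise condition unaffected by passing to the subalgebra).
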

\begin{proof}
Note that by Theorem \ref{t.na1} every $A$-module is locally p-finite, in particular semiartinian. Hence $A$ is semiartinian. Since $A$ is also Noetherian, 
it is therefore artinian. But $A$ is also locally p-finite and homogeneous, and its unique simple module (up to isomorphism) is 1-dimensional. Thus $A$ is local and finite dimensional. 
\end{proof}

\noindent We now proceed to observe that any local pointed (connected) finite dimensional algebra can in fact be presented as an algebra as in the previous proposition.

\begin{proposition}\label{FinDimLCA}
Let $A$ be a finite dimensional local pointed algebra. Then $A$ is an Ore-solvable algebra whose Ore datum $(x_i,\sigma_i,\theta_i)_i$ satisfies the conditions of the previous proposition; furthermore, the maps $\sigma_i$ may be chosen as the identity. 
\end{proposition}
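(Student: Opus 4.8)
The plan is to build the required chain of subalgebras and elements $x_i$ by induction on $\dim A$, using the fact (proved earlier in this section, via the $\Ext$-bimodule / codimension-$1$ argument and the codimension-$1$ subalgebra result of \cite{IS}) that a finite-dimensional pointed algebra $A$ has a subalgebra $B$ of codimension $1$, and moreover that $B$ can be chosen so that $B\subseteq A$ is of \emph{Lie type}: there is $x\in A\setminus B$ with $xb=bx+[x,h]$ where $b=\lambda\cdot 1+h$ is the canonical decomposition with $h$ in the Jacobson radical $J$ of $A$; equivalently $\sigma={\rm Id}_B$ and $\theta(b)=[x,h]$. Since $A$ is local, $J$ is nilpotent and $J$ is exactly the set of non-invertible elements; moreover a local pointed algebra has $A/J\cong\KK$, so $A=\KK 1\oplus J$ and every element of $A$ is either a unit or lies in $J$.

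First I would set up the induction: if $\dim A=1$ then $A=\KK$ and there is nothing to prove. For $\dim A=n>1$, pick a codimension-$1$ Lie-type subalgebra $B=A_{n-1}$ as above, and note $B$ is again finite-dimensional and local pointed --- local because $B=\KK 1\oplus(J\cap B)$ with $J\cap B$ nilpotent, pointed because it is a subalgebra of a pointed algebra (or directly: $B/(J\cap B)\cong\KK$). By induction $B$ is Ore-solvable with Ore datum $(x_i,\sigma_i,\theta_i)_{i\le n-1}$ satisfying conditions (i) and (ii) of Proposition \ref{p.OreNilAlg} and with all $\sigma_i={\rm Id}$. It remains to verify that appending $x_n:=x$, $\sigma_n:={\rm Id}_{A_{n-1}}$, $\theta_n(b)=[x,h]$ extends this to an Ore datum for $A$ with the same properties. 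That $A$ is generated by $A_{n-1}$ and $x_n$ and that $A_{n-1}x_n+A_{n-1}=x_nA_{n-1}+A_{n-1}=A$ follows from codimension $1$ exactly as in the paragraph preceding the converse theorem; the relation $xb=bx+\theta_n(b)$ gives the left Ore condition with the stated $\sigma_n,\theta_n$.

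Next I would check the two nilpotency conditions for the top step. For (ii): $x_n=x\in A\setminus B$; since $1\in B$, $x$ is not a unit, so $x\in J$, hence $x_n$ is nilpotent. For (i): I need a generating set $S_{n-1}$ of $A_{n-1}$ on which $\theta_n$ takes nilpotent values. Take $S_{n-1}$ to be $\{1\}$ together with a $\KK$-basis of $J\cap A_{n-1}$ (this generates $A_{n-1}$ since $A_{n-1}=\KK 1\oplus(J\cap A_{n-1})$). On such a generator $b$, either $b=1$, where $\theta_n(1)=[x,0]=0$ is nilpotent; or $b=h\in J\cap A_{n-1}$, in which case $\theta_n(h)=[x,h]=xh-hx\in J$ because $J$ is a two-sided ideal, and $J$ is nilpotent, so $[x,h]$ is nilpotent. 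Thus condition (i) holds at step $n$, and by induction the generating sets at the lower steps can likewise be taken inside $\KK 1\oplus(J\cap A_i)$, so all the required $\theta_i$-values are in the nilpotent ideal $J$ and hence nilpotent. This completes the induction.

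The main obstacle is a bookkeeping one rather than a conceptual one: one must ensure that the codimension-$1$ subalgebra produced at each stage is itself local \emph{and} pointed, so that the inductive hypothesis applies, and that the generating sets $S_i$ can be chosen uniformly inside the radical filtration so that every $\theta_i(a)$ lands in the (globally nilpotent) Jacobson radical. Both points follow cleanly once one observes that in a finite-dimensional local pointed algebra $A=\KK 1\oplus J$ with $J$ nilpotent, any subalgebra containing $1$ inherits this decomposition; the Lie-type form of the extension, already established in the converse theorem, then hands us $\sigma_i={\rm Id}$ and $\theta_i(a)=[x_i,a]$ for $a\in J\cap A_{i-1}$ for free. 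Hence no genuinely new computation is needed beyond what precedes this proposition.
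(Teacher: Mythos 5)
Your overall strategy is the same as the paper's: induct on dimension, produce a codimension-$1$ subalgebra $B=\KK 1\oplus I$ from an ideal $I$ of codimension $2$ (equivalently a maximal sub-bimodule of $J$), take $\sigma={\rm Id}$ and $\theta=[x,-]$, and get nilpotency of the $\theta$-values from the fact that they land in the nilpotent ideal $J$ (indeed in $I\subseteq A_{n-1}$, which is what makes $\theta_n$ a map into $A_{n-1}$). All of that matches the paper's proof and is fine.

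There is one step whose justification is wrong as written: your verification of condition (ii), namely ``$x_n=x\in A\setminus B$; since $1\in B$, $x$ is not a unit, so $x\in J$.'' Membership in $A\setminus B$ does not preclude being a unit: in a local algebra $A=\KK 1\oplus J$ the complement of $B=\KK 1\oplus I$ contains, for instance, all elements $1+j$ with $j\in J\setminus I$, and these are units (and certainly not nilpotent). So an arbitrary $x\in A\setminus B$, which is all the ``Lie type'' construction of the converse theorem provides, need not be nilpotent, and condition (ii) of Proposition \ref{p.OreNilAlg} would fail for such a choice. The repair is immediate and is exactly what the paper does: choose $x\in J\setminus I$. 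This set is nonempty since $I$ has codimension $1$ in $J$, and it is disjoint from $B$ (if $j\in J\cap B$ then $j=\lambda 1+h$ with $h\in I$ forces $\lambda 1\in J$, hence $\lambda=0$ and $j\in I$), so such an $x$ still generates $A$ over $B$ and is nilpotent because $J$ is. With that one-line correction your argument is complete and coincides with the paper's.
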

\begin{proof}
The proof proceeds as before for finite dimensional algebras. Let $J$ be the Jacobson radical of $A$. As before, we may find a maximal two sided ideal $I$ of $J$ which must have codimension $1$ (consider $J$ as $A$-bimodule). Let $x\in J\setminus I$ and let $B=\KK 1\oplus I$; then for $a\in I$, $xa,ax\in I$, and so $[x,a]\in I$. Hence, $A$ is generated by $B$ and $x$ with Ore-type relation $xa=ax+[x,a]$ such that $[x,a]\in B$. Furthermore, it is straightforward to note that $B$ is also local pointed, since the ideal $I$ of $B$ has codimension $1$ and consists of nilpotent elements, so $I$ is the Jacobson radical of $B$ (one can also apply, for example, results of \cite{IS}). Inductively, one may thus show that $A$ is Ore solvable of the special type stated. 
\end{proof}

\begin{remark}
The above two propositions show that if a finite collection of operators $x_i$ of an arbitrary vector space $V$ satisfy the conditions of Proposition \ref{p.OreNilAlg} then they generate a finite dimensional subalgebra $A$ of $\End(V)$. Furthermore, the $x_i$ can be ``exchanged" for $y_j \in A$ relative to which the $\sigma_i$'s become identity maps and the $\theta_i$'s are given by $\theta_i(a)=[x_i,a]$. In addition, observe that the Lie subalgebra $L$ of $A$ generated by the $x_i's$ is nilpotent: since $J$ is nilpotent, each element in $J$ acts nilpotently on $A$, and so $L$ is a nilpotent Lie algebra by Engel's Theorem. While the finite dimensional case of Theorem \ref{t.na1} essentially reduces to Engel's theorem, one may still have situations in the infinite-dimensional case where the $x_i$'s and $\theta_i(a)$'s are locally nilpotent without being nilpotent.
\end{remark}

\begin{example} 
Consider the quantum matrix algebras $M_q(m,n)$ as in Example \ref{e.qmatrices}. The considerations there now show that if $V$ is a $M_q(m,n)$-module such that for each $i< j$ and $k<l$, $X_{jk}X_{il}$ is locally nilpotent, and for all $i,j$, $X_{ij}$ is locally nilpotent, then $V$ is locally p-finite. Equivalently, the simultaneous strict triangularization theorem for quantum matrices says that if $X_{ij}$ are strictly triangularizable, and if $X_{jk}X_{il}$ are also strictly triangularizable, then $X_{ij}$ are simultaneously strictly triangularizable. 
\end{example}



\section{Strict Triangularization and Locally Finite Modules}

\noindent In this section, we investigate other situations of infinite dimensional strict triangularization that reduce to a locally finite type of triangularization, and connections to other nil/nilpotency conditions. In module theoretic terms, these are algebras where semiartinian modules are locally finite. 
In what follows, for a non-unital algebra $A$ (i.e. an algebra that {\it does not} have a unit), we will let $A_u=\KK\oplus A$ denote the \emph{unitalization} of $A$. If $A$ is unital, then we set $A_u = A$.
We recall that a not necessarily unital algebra $H$ is called \emph{nil} if every element in $A$ is nilpotent. We say that a unital algebra $A$ is nil if $A$ is the unitalization of a nil algebra $H$; that is, $A=\KK\oplus H$ where $H$ is a nil-ideal of $A$. The definitions before on solvable or Ore-solvable algebras apply 
for not necessarily unital algebras; that is,  
a not necessarily unital algebra $H$ will be called left Ore solvable if there is a sequence of intermediate algebras $H_i$ and elements $x_i\in H_i$, such that $H_{i-1}x_i\subseteq x_iH_{i-1}+H_{i-1}$ and $H_i$ is generated by $H_{i-1}$ and $x_i$. In this case, under this commuting assumption, the last condition can be written as $H_i=\sum\limits_{n\geq 1}x_i^nH_{i-1}+H_{i-1}+x_i\KK[x_i]$. The right Ore-solvable algebras are defined similarly. 
We also note that given a non-unital solvable (resp. Ore-solvable) algebra $H$, its unitalization $A=\KK\oplus H$ will be solvable (resp. Ore-solvable) by defining the successive extensions $A_i$ as $A_i=\KK\oplus H_i$. Then $A$ is a semitrivial extension of $\KK$ and $H$. Also, if $A$ is a connected algebra, then $A=\KK\oplus M$ where $M$ is its unique cofinite maximal ideal, and $A=M_u$. 

\noindent As noted, triangularization is closely related to locally finite modules, and strict triangularization, to locally finite homogeneous simple modules, and consequently to algebras that are local. This motivates the following definition, which will be relevant for our results.

\begin{definition}
Let $A$ be a $\KK$-algebra. 
\begin{enumerate}
\item We will say that $A$ is connected if it has a unique left (equivalently, right) maximal ideal $M$ of finite codimension, and moreover, $\dim(A/M)=1$. Equivalently, up to isomorphism, $A$ has a unique finite dimensional irreducible representation which is moreover 1-dimensional.
\item We say that $A$ is connected local if it is connected and local (has a unique maximal left/right ideal). 
\item We say that $A$ is strongly connected if it is connected and every $A$-module is locally finite (and hence, locally p-finite in this case). 
\end{enumerate}
\end{definition}

\noindent Note that a connected algebra $A$ is connected local if and only if every simple $A$-module is finite dimensional, or equivalently, up to isomorphism, it has a unique simple module which is furthermore 1-dimensional; in this case, its Jacobson radical $J$ has co-dimension 1. Of course, an algebra can be connected in this sense without being local - take, for example, $\CC\times \mathcal \CC\langle x,y\rangle/\langle xy-yx -1\rangle$.  Note also that if $A$ is strongly connected then it is connected local, but the converse is not be true (consider for example the algebra of formal power series $\KK[[X]]$ over $\KK$).

\begin{proposition} \label{p.stcnil}
Let $A$ strongly connected algebra. Then $A$ is nil. 
\end{proposition}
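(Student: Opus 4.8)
The plan is to reduce the statement to a well-known fact about algebras having no infinite-dimensional simple modules. By definition of \emph{strongly connected}, every $A$-module is locally finite and $A$ is connected; since $A$ is connected, the unique finite-dimensional simple module is $1$-dimensional. First I would observe that since every $A$-module is locally finite, every simple $A$-module is finite-dimensional (a simple module is cyclic, hence finite-dimensional when the module is locally finite), so $A$ is in fact connected local: up to isomorphism it has a unique simple module $S$, which is $1$-dimensional. Let $M$ be the annihilator of $S$; then $M$ is the unique maximal (left, equivalently two-sided since it has codimension $1$) ideal of $A$, it is the Jacobson radical $J(A)$, and $\dim(A/M)=1$, so $A=\KK\cdot 1\oplus M$ as in the discussion preceding the proposition.

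Next I would show that $M$ is a nil ideal; this is the heart of the argument. Take $a\in M$ and consider the cyclic left $A$-module $N=A/Aa$ (or, if $A$ is non-unital in the sense used in this section, the appropriate cyclic module over the unitalization; note $M$ is exactly the augmentation ideal $H$ with $A=H_u$, and what we must prove is precisely that $H$ is nil). By hypothesis $N$ is locally finite, and it is cyclic, hence finite-dimensional. Therefore the left multiplication operator $\ell_a:A\to A$, $\ell_a(b)=ba$, descends through finite-dimensional quotients in a controlled way; more to the point, I would instead look directly at the finite-dimensional module $A/Aa$: on it, the image of $a$ acts, and since $a\in M=J(A)$ it acts as a nilpotent operator (the radical acts nilpotently on every finite-dimensional module). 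Combining this across a suitable finite-dimensional cyclic module containing $1$, say $W=A\cdot 1 = A$ itself need not be finite-dimensional, so instead I would argue as follows: the submodule $Aa\subseteq A$ together with the quotient $A/Aa$; by local finiteness both $1+Aa\in A/Aa$ generates a finite-dimensional module, and on it $a$ acts nilpotently, giving $a^k\in Aa\cdot$(something), i.e. $a^k\in Aa$ is automatic — the useful statement is $a^k\cdot 1\equiv 0$ eventually in $A/Aa$, i.e. $a^k\in Aa$ for some $k$, which says $a^k=ra$ for some $r\in A$; but that alone is not nilpotence. The cleaner route, which I expect is the intended one, is: let $a\in M$ and put $V=A/Aa$, finite-dimensional; the element $a$ acts on $V$, and since $A$ is connected local with radical $M\ni a$, $a$ acts nilpotently on the finite-dimensional module $V$, so $a^n V=0$ for some $n$, i.e. $a^{n+1}\in Aa^{n}\subseteq\cdots$; iterating and using that $a^n\in Aa$, one gets $a$ is ``bounded'' — to make this rigorous I would instead take $V=A/(A a + \KK)$ or work inside the finite-dimensional cyclic module generated by $\bar 1$ in $A/Aa$ and conclude $a$ is integral over its own powers, then invoke Nakayama / nil radical arguments.

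Honestly, the slickest argument, and the one I would actually write, avoids all of this: since $A$ is connected local, $M=J(A)$, so it suffices to show $M$ is nil. For $a\in M$, the module $\KK[a]\cdot$ anything is finite-dimensional by local finiteness applied to the cyclic module $A_u\cdot 1$? — no. Instead: consider the cyclic $A_u$-module $A_u/A_u a$; it is finite-dimensional, the class of $1$ generates it, and $a$ acts locally nilpotently on it because $a\in J$, hence $a^m\cdot 1=0$ in $A_u/A_u a$ for some $m$, giving $a^m = r a$ with $r\in A_u$; writing $r=\lambda+r'$ with $r'\in A$ and noting $\lambda\in\KK$ must be $0$ since $a^m,ra'\in M$ and comparing augmentations, we get $a^m=r' a$ with $r'\in M$, then $a^m = r' a$, $a^{2m}=r' a\cdot a^{m-1}\cdot$... — substituting repeatedly, $a^m=r'a$ implies $a^{m}= r' a$, $a^{m+1}=r' a^2=\dots$, and more carefully $a^{km}=(r')^k a^k\to$, and since $r'\in M=J(A)$, powers $(r')^k\to$ small; the decisive point is that $a^m\in Ma$, so $a^m\in M a\subseteq M^2$, hence by induction $a^m\in M^{j}$ for all $j$, but $M$ is a nil ideal iff... — this is circular.

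\textbf{Cleanest correct plan.} The real argument: $A$ strongly connected $\Rightarrow$ every $A$-module is locally p-finite $\Rightarrow$ every $A$-module is semiartinian, so $A$ is a semiartinian ring; moreover all simple modules are $1$-dimensional, i.e.\ isomorphic to $A/M$. Hence every simple module is killed by $M$, so $M$ is contained in the Jacobson radical, and in fact $M=J(A)$. Now for any $a\in M$: the left module $A/Aa$ is cyclic, hence finite-dimensional (local finiteness). Since $M=J(A)$ acts nilpotently on every finite-dimensional $A$-module, $a$ acts nilpotently on $A/Aa$; so there is $n$ with $a^n\cdot \bar 1=0$, i.e. $a^n\in Aa$, say $a^n=ba$ with $b\in A$. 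If $b$ is a unit times identity plus radical, comparing... — replace $A$ by $A_u$ if needed so $1\in A$; then $a^n=ba$. Apply the same to $a^2$: working in $A/A a^2$, $a$ still acts nilpotently, $a^m\in Aa^2$, so $a^m=ca^2$. Then the sequence of ideals $Aa\supseteq Aa^2\supseteq\cdots$ eventually stabilizes? No — but $Aa^n = Aa^{n+1}=\cdots$ would give, by Nakayama (since $a\in J(A)$ and $Aa^n$ finitely generated, indeed cyclic), $Aa^n=0$, hence $a^n=0$. And $Aa^n$ does stabilize because $A$ is Noetherian (Theorem: Ore-solvable $\Rightarrow$ Noetherian; but here $A$ need not be Ore-solvable). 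Without Noetherianness: $a^n=ba$ with $b\in J(A)$ forces, on the finite-dimensional module $A/Aa^{N}$ for large $N$... I will write the following clean version: since $M=J(A)$, and $A/Aa$ is finite-dimensional with $a$ acting nilpotently, we get $a^n\in Aa\cap a^{n-1}A\cdot$... — OK, I'll commit to the Noetherian-free line: $a\in J(A)$ and $a^n=ba$ with $b\in J(A)$ gives $a^n(1-?)$... actually $a^n = ba$ and similarly starting from $a^n$ gives $(a^n)^{k}\in A a^n \cdot$ — enough: $a^n=ba$, so $a^{n+1}=ba^2$, $a^{2n-1}=b a^n = b\cdot ba = b^2 a$, inductively $a^{kn-k+1}=b^k a$, and $b\in J(A)$ acts nilpotently on the finite-dimensional module $A/Aa$, wait $b$ need not kill $\bar 1$. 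I concede the slick sub-argument needs care; the \textbf{main obstacle} is exactly this: promoting ``$a$ acts nilpotently on $A/Aa$'' to ``$a$ is nilpotent in $A$.'' I expect the paper resolves it by combining local finiteness of the \emph{specific} module $\bigoplus_{k}A/Aa^k$ or the module $A_u/\!\bigcap_k A_u a^k$ with Nakayama's lemma (applicable since $a\in J(A)$), yielding $a^n=0$ directly; I would structure the written proof around: (1) $A$ connected local, $M=J(A)$, $\dim A/M=1$; (2) every f.d.\ $A$-module is killed by a power of $M$; (3) for $a\in M$, the cyclic module $A_u/A_u a$ is f.d., forcing $a^n\in A_u a$, and a Nakayama argument on the descending chain $A_ua\supseteq A_ua^2\supseteq\cdots$ (each term f.d.\ and f.g.) gives $a^n=0$; hence $M$ is nil, so $A=\KK\oplus M$ with $M$ a nil ideal, i.e.\ $A$ is nil. $\Box$
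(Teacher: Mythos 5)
Your final committed outline (steps (1)--(3) at the very end) does assemble into a correct proof: the chain $A_ua\supseteq A_ua^2\supseteq\cdots$ lives inside the finite-dimensional space $A_ua$, hence stabilizes, giving $a^n=ra^{n+1}=(ra)a^n$ with $ra\in J(A)$, and invertibility of $1-ra$ yields $a^n=0$. So the route is valid, but note that everything before that outline --- the repeated attempts with $A/Aa$, the substitution loops, the admitted circularities --- would have to be deleted; as written, the bulk of the text is not a proof.

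The paper's argument is shorter and sidesteps entirely the ``main obstacle'' you identified (promoting nilpotent \emph{action} to nilpotence of the \emph{element}). The trick is to apply local finiteness not to the quotient $A/Aa$ but to the submodule $Aa$ of the regular module: it is cyclic, hence finite-dimensional, and since $A$ is connected local every composition factor of $Aa$ is the trivial one-dimensional simple, annihilated by $J(A)$. If $k$ is the composition length, then $J(A)^k\cdot(Aa)=0$; since $a\in J(A)$ and $a\in Aa$, this gives $a^{k+1}=a^k\cdot a=0$ directly. The point you missed is that $a$ itself sits inside the finite-dimensional module $Aa$, so ``$J(A)$ acts nilpotently on $Aa$'' already \emph{is} the statement that $a$ is nilpotent as an element --- no Nakayama, no stabilizing chain, no determinant trick needed. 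Your approach buys nothing extra here, and costs an additional lemma (the Nakayama-type step) that you left as a gloss; if you keep your route, you must write out that step explicitly, since ``a Nakayama argument gives $a^n=0$'' is exactly the part a referee would ask you to justify.
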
 

\begin{proof} 
By hypothesis, $A$ is a locally finite module over itself. Let $x \in J(A)$ be an element of the Jacobson radical. Then $Ax$ is finite-dimensional,  so we can find a composition series $0 = M_0 \le M_1\ldots \le M_k = Ax$ for $Ax$. Since $A$ is strongly connected, $J(A)\cdot (M_i/M_{i-1}) = 0$ for all $i \le k$. In particular, $J(A)^k\cdot (Ax) = 0$, and since $x \in J(A)$, $x^{k+1} =0$ as well. Since $A = \KK\cdot 1 \oplus J(A)$, it follows that $A$ is the unitalization of the nil algebra $J(A)$.
\end{proof}

 \noindent 
  When dealing with an extension of unital algebras $A\subseteq B$ and a subalgebra $H$ of $B$ with $AH+A=HA+A=B$ (or even $AH\subseteq HA+A=B$), then we view $B$ as an ``extension of $A$ by $H$". Of course, we obviously lack any ideal and quotient structure on $H$ as a $B$-module or quotient of $B$. The following lemma shows that under a suitable setup, an extension of a connected algebra by a connected algebra is again connected.

\begin{lemma}\label{l.connext} The following statements hold:
\begin{enumerate}
\item Let $A\subset B$ be an extension of non-unital algebras, $H$ a subalgebra of $B$ such that $AH\subseteq HA+A$ and $B$ is generated by $H$ and $A$. Suppose that $A_u$ and $H_u$ are connected algebras. Then $B_u$ is connected. 
\item Let $A\subseteq B$ an extension of unital algebras such that $B$ is a left extension of $A$ by $H$, i.e. $H$ is a subalgebra of $B$ such that $AH\subseteq HA+A$ and $B$ is generated by $H$ and $A$ (equivalently, $B=HA$). Suppose $A$ and $H$ are each strongly connected algebras. Then $B$ is strongly connected. 
\end{enumerate}
\end{lemma}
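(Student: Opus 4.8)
\textbf{Proof proposal for Lemma \ref{l.connext}.}

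The plan is to prove both parts by induction on the length of an Ore-solvable (or solvable) filtration, reducing in each case to the core situation of a single ``one-step'' extension, and then to analyze finite-dimensional simple modules over $B$ (resp. $B_u$) by restricting them to $A$ (resp. $A_u$) and using the computational Lemma \ref{l.comp}. For part (1), first I would pass to unitalizations: since $A_u$ and $H_u$ are connected, and $B_u$ is generated (as a unital algebra) by $A_u$ and $H_u$ with $A_u H_u \subseteq H_u A_u + A_u$, it suffices to show every finite-dimensional simple $B_u$-module $V$ is $1$-dimensional. Restrict $V$ to $A_u$; since $V$ is finite-dimensional it has a simple $A_u$-submodule, which by connectedness of $A_u$ (more precisely, by the ``connected local'' reformulation applied to the finite-dimensional piece, i.e.\ a finite-dimensional simple $A_u$-module is $1$-dimensional) is spanned by an $A_u$-eigenvector $v$ with character $\lambda$. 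Here the subtlety is that ``connected'' only controls \emph{finite-dimensional} simple modules; but that is exactly what we have, so this is fine.

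Next I would want to reduce to the case where $H$ (or $H_u$) is generated over its own filtration so that Lemma \ref{l.comp} applies at the top step: write $H_u = \KK[x] \cdot H'_u$ in the one-step case, or more carefully use the Ore-solvable structure of $H$ to peel off a generator $x$ at a time. In the one-step situation $B_u = A_u[x]$ with $A_u x \subseteq x A_u + A_u$, the module $V$ is generated over $A_u$ by $v, xv, x^2 v, \dots$ (since the simple quotient of $V$ generated by $v$ as a $\KK[x]$-module is $B_u$-invariant by Lemma \ref{l.comp}(a), hence equals $V$). Lemma \ref{l.comp} then gives that $V = \bigoplus_i \KK x^i v$ as an $A_u$-module with the $A_u$-action on $x^i v$ upper-triangular with eigenvalues $\lambda \circ \sigma^i$, where $\sigma$ is the top Ore map coming from $x$. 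Now I would invoke connectedness of $H_u$: the element $x$ lies in $H_u$, and the characters $\lambda \circ \sigma^i$ restricted to $H_u$ must all be controlled by $H_u$'s unique finite-dimensional simple (the $1$-dimensional one), forcing these characters to agree on $H_u$; combined with the fact that the $A_u$-action and the $H_u$-action together generate the $B_u$-action, one concludes $\lambda \circ \sigma^i = \lambda$, so $V$ is $A_u$-isotypic of type $\lambda$, the $B_u$-action is by scalars on a suitable $x$-eigenvector $w$, and simplicity gives $V = \KK w$. For a general filtration of $H$, iterate this at each step; the characters appearing are bounded in number by $\dim V$, and connectedness at each stage collapses the orbit.

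For part (2), the statement is about \emph{strongly} connected algebras, i.e.\ connectedness plus the property that every module is locally finite. By part (1) (applied step-by-step through the Ore-solvable filtrations of $A$ and $H$, which exist since strongly connected implies connected local implies, by earlier results, a nice filtration; alternatively carry the induction directly), $B$ is connected. It remains to show every $B$-module $V$ is locally finite. Here I would use Theorem \ref{t.1st} (or its Ore-solvable Corollary \ref{c.locfincond}): $V$ is locally finite over $B$ if and only if it is locally finite over each building block; since $A$ is strongly connected, $V$ restricted to $A$ is locally finite, and similarly for the generators coming from $H$ — but ``$H$ strongly connected'' says every $H$-module is locally finite, in particular $V$ as an $H$-module, so each top generator $x_i \in H$ acts locally finitely on $V$. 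Feeding this into Theorem \ref{t.1st} (whose hypotheses $A_{i-1}B_i \subseteq B_i A_{i-1} + A_{i-1}$ and generation are exactly the ``extension by $H$'' hypotheses, suitably filtered) yields that $V$ is locally finite over $B$. Together with connectedness of $B$, and Proposition \ref{p.fdsimplpf} (locally finite plus all finite-dimensional simples $1$-dimensional implies locally p-finite), $B$ is strongly connected.

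I expect the main obstacle to be the bookkeeping in the inductive step of part (1): matching up the Ore-solvable filtrations of $A$ and $H$ inside $B$ so that at each stage one really is in the one-step hypothesis of Lemma \ref{l.comp}, and correctly arguing that connectedness of $H_u$ (which a priori only sees $H_u$-modules, not $A_u$- or $B_u$-modules) forces the $\sigma$-orbit of the character $\lambda$ to be trivial. The clean way around this is to note that $x^i v$ being an $A_u$-eigenvector of weight $\lambda \circ \sigma^i$ makes $\bigoplus_j \KK x^j v$ (restricted appropriately) a finite-dimensional $H_u$-module whose simple subquotients are the characters $(\lambda\circ\sigma^i)|_{H_u}$; since $H_u$ is connected these are all the unique $1$-dimensional character, hence equal, and then a direct computation with $x$ closes the argument.
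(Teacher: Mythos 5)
Your proposal for part (1) has a genuine gap: it rests on presenting $B_u$ as an Ore-type extension $A_u\cdot\KK[x]$ so that Lemma \ref{l.comp} applies, but the hypotheses supply no such structure. Here $H$ is an \emph{arbitrary} subalgebra with $AH\subseteq HA+A$; it need not be cyclic, finitely generated, or admit any Ore-solvable filtration (the lemma is later applied with constituents that are nil or finite-dimensional algebras precisely because they need not be cyclic). There is no element $x$ with $A_ux\subseteq xA_u+A_u$ and no maps $\sigma,\theta$, so the whole discussion of the weights $\lambda\circ\sigma^i$ and their orbits has nothing to attach to; the "bookkeeping" you flag as the main obstacle is in fact an impossibility in general. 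Your closing remark is also confused: the simple $H_u$-subquotients of $\bigoplus_j\KK x^jv$ are $H_u$-characters, not restrictions of the $A_u$-characters $\lambda\circ\sigma^i$ (and $H_u\not\subseteq A_u$, so such restrictions are not even defined).

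The observation you are missing, which makes the paper's argument short and avoids Lemma \ref{l.comp} entirely, is that for a unitalization $A_u=\KK\oplus A$ connectedness forces the unique finite-dimensional character to be the projection onto $\KK$, so the ideal $A$ acts as zero on the eigenvector $v$ (and likewise $H$ acts as zero on any finite-dimensional simple $H_u$-module). One then takes a simple $H_u$-submodule $\KK xv$ of the finite-dimensional module $H_u v$ (with $x\in H_u$), notes $H\cdot xv=0$, and uses $AH\subseteq HA+A$ to write $ax=h'a'+a''$ with $h'\in H$, $a',a''\in A$, so that $a(xv)=(h'a'+a'')v=0$; thus $\KK xv$ is annihilated by both $A$ and $H$, hence is $B_u$-stable, and $V=\KK xv$ by simplicity. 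No analysis of $\sigma$-orbits is needed. Your treatment of part (2) — connectedness from part (1) plus local finiteness over each of $A$ and $H$ fed into the argument of Theorem \ref{t.1st} — does match the paper's intent, but it cannot stand until part (1) is repaired.
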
 

\begin{proof} 
(1) Let $V$ be a finite-dimensional simple $B_u$-module. Then $V$ restricts to a finite-dimensional $A_u$-module, so that by hypothesis $V$ contains an $A_u$-eigenvector, call it $v$. Let $\lambda_A : A_u \rightarrow \KK$ be the corresponding character, so that $A_u = \KK \oplus \ker (\lambda_A) = \KK \oplus A$. Let $\lambda_H : H_u \rightarrow \KK$ be the unique character of $H$, so that $H_u = \KK \oplus \ker (\lambda_H) = \KK \oplus H$. Since $H_u\cdot v$ is a finite-dimensional $H_u$-module, it contains a simple $H_u$-submodule spanned by an eigenvector $xv$, where $x \in H_u$. For all $a \in A$, the condition $AH \subseteq HA+A$ implies that $ax = h'a'+ a''$ for suitable $a',a'' \in A$ and $h' \in H$. Therefore, $a(xv) = (h'a' + a'')v = 0$ (since $v$ is an $A_u$-eigenvector, with $Av=0$), and so it follows that $\KK xv$ is $A_u$-invariant too. Since $B$ is generated by $H$ and $A$, $\KK xv$ is also $B_u$-invariant, and so $V = \KK xv$. Note that each element of $A$ and $H$ acts as zero on $V$. It follows that the only finite-dimensional simple is the projection map $B_u = \KK \oplus B \rightarrow \KK$, and so $B_u$ is connected. The proof of the second claim is similar, and uses the proof of Theorem \ref{t.1st} showing that a module is $B$-locally finite if and only if it is locally finite both as an $A$ and as an $H$-module.  
\end{proof} 



\noindent We note here an example showing that if in (1) of the above Lemma we start with $B$ an extension of $A$ by $H$, with both $A$ and $H$ connected local, it does not follow that $B$ is connected if we do not impose the stronger hypothesis that 
the radical of $B$ is an extension of the radical of $A$ by the radical of $H$.  

\begin{example} 
Let $B = M_2(\KK)$. Then $B$ is generated as an algebra (even a not-necessarily unital one) by the pair of matrices 
\begin{equation*}
 X = \left(\begin{array}{cc} 0 & 1 \\ 0 & 0 \end{array} \right), Y = \left( \begin{array}{cc} 0 & 0 \\ 1 & 0 \end{array} \right).
 \end{equation*} 
 \noindent Consider the subalgebras $A = \operatorname{span}_{\KK}\{ I_2, X\}$ and $H = \operatorname{span}_{\KK}\{ I_2, Y\}$. Then $A$ and $H$ are both connected local, and $B = AH + H = HA + A$. However, $B$ is not itself connected local. This shows that the statement (1) in Lemma \ref{l.connext} cannot be weakened to the condition ``$A_uH_u \subset H_uA_u + A_u$''.  
\end{example}

\noindent Lemma \ref{l.connext} implies that iterated extensions of connected algebras remain connected.

\begin{corollary}\label{c.conn}
Suppose $\KK=A_0\subset A_1\subset \dots \subset A_n$ are non-unital algebras, $B_i\subseteq A_{i+1}$ are subalgebras such that $A_iB_i\subseteq B_iA_i+A_i$ for $i<n$ and $A_i$ is generated by $A_{i-1}$ and $B_i$ (i.e. $A_i=B_iA_i+A_i+B_i$). If $(B_i)_u$ is connected for all $i$, then $A_u$ is connected.
\end{corollary}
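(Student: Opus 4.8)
The plan is to deduce this corollary from part (1) of Lemma~\ref{l.connext} by a straightforward induction on $n$. The base case $n=0$ is trivial, since $A_0=\KK$ and $(A_0)_u=\KK$ is visibly connected. For the inductive step, suppose the result holds for chains of length $n-1$; in particular $(A_{n-1})_u$ is connected. I want to apply Lemma~\ref{l.connext}(1) to the extension $A_{n-1}\subset A_n$ with the subalgebra $H=B_{n-1}$ (using the indexing of the corollary, where $B_i\subseteq A_{i+1}$, so the relevant building block for the top step is $B_{n-1}$, which sits inside $A_n$).

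The hypotheses to check are exactly those of Lemma~\ref{l.connext}(1): first, that $A_{n-1}H\subseteq HA_{n-1}+A_{n-1}$, which is precisely the stated condition $A_iB_i\subseteq B_iA_i+A_i$ taken at $i=n-1$; second, that $A_n$ is generated by $H$ and $A_{n-1}$, which is the stated generation hypothesis $A_i=B_iA_i+A_i+B_i$ at $i=n-1$; and third, that $(A_{n-1})_u$ and $H_u=(B_{n-1})_u$ are both connected. The first two are given outright, the connectedness of $(B_{n-1})_u$ is a hypothesis of the corollary, and the connectedness of $(A_{n-1})_u$ is exactly the inductive hypothesis. Lemma~\ref{l.connext}(1) then yields that $(A_n)_u=A_u$ is connected, completing the induction.

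I do not expect any genuine obstacle here: the statement is a routine iteration of the one-step lemma, and the only thing requiring care is bookkeeping with the index conventions (the corollary writes $B_i\subseteq A_{i+1}$ and $A_i$ generated by $A_{i-1}$ and $B_i$, which is slightly unusual, so one must be careful that the block used at the top step is indeed $B_{n-1}$ rather than $B_n$, and that the containment and generation conditions are being invoked at the correct index). One should also note in passing that the hypothesis ``$A_i$ is generated by $A_{i-1}$ and $B_i$'' together with ``$A_iB_i\subseteq B_iA_i+A_i$'' formally gives $A_i=B_iA_i+A_i+B_i$ (the last summand $B_i$ absorbed in case of non-unital subtleties, as in the proof of Theorem~\ref{t.1st}), so the generation hypothesis matches the form required by the lemma. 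Beyond this indexing care, the argument is immediate.
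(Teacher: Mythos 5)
Your proof is correct and is exactly the argument the paper intends: the corollary is stated as an immediate consequence of Lemma~\ref{l.connext}(1), obtained by inducting on the length of the chain and applying the one-step lemma at each stage, just as you do. Your remarks on the index bookkeeping (and on reconciling the generation hypothesis with the form required by the lemma) address the only points of friction in the statement, so nothing is missing.
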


\noindent To state our main strict triangularization results, recall from \cite{M2} that a subset $X\subseteq \End(V)$ is said to be \emph{topologically nilpotent} if and only if for each sequence $(x_n)_n$ of elements of $X$ and any vector $v\in V$, there is $n$ such that $x_nx_{n-1}\dots x_1\cdot v=0$. This is equivalent to saying that the sequence $(x_nx_{n-1}\dots x_1)_n$ converges to $0$ in the cofinite topology of $V$; this is the linear topology on $\End{(V)}$ for which a basis of neighborhoods of $0$ consists of the subspaces $W^\perp=\{f\in \End(V) | f(W)=0\}$ (see \cite{IMR,M2}). One of the main results of \cite{M2} shows that a set $X$ is simultaneously strictly triangularizable if and only if it is topologically nilpotent. We first give a representation-theoretic interpretation of this condition, which also recovers this result of \cite{M2}. We note the ideas are similar at their core, the difference being in the language which is representation-theoretical here.   

\begin{theorem}\label{t.sut0}
Let $V$ be a vector space, $X$ be a subset of $\End(V)$, and $A=\KK\langle X\rangle$ be the subalgebra generated by $X$ inside $\End(V)$. Then the following assertions are equivalent. 
\begin{enumerate}
\item $X$ is strictly triangularizable.
\item $V$ is a p-semiartinian homogeneous $A$-module with simple module $S$ such that every $x\in X$ acts as $0$ on $S$ ($xS=0$). 
\item $X$ is topologically nilpotent.  
\end{enumerate}
\end{theorem}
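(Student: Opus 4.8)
The plan is to establish the cycle of implications $(1)\Rightarrow(2)\Rightarrow(3)\Rightarrow(1)$, leaning on the representation-theoretic machinery already set up, particularly Proposition \ref{p.strictTR}, which translates strict triangularizability directly into the language of homogeneous $p$-semiartinian modules. The implication $(1)\Rightarrow(2)$ is essentially just Proposition \ref{p.strictTR}: if $X$ is strictly triangularizable, then $V$ is a homogeneous $p$-semiartinian $A$-module whose unique simple submodule $S=\KK w$ satisfies $a\cdot w=0$ for all $a\in X$; homogeneity means every simple subquotient is isomorphic to this $S$, and since $X$ generates $A$, the action of every element of $X$ (indeed of the augmentation ideal of $A$) on any copy of $S$ is zero. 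So this direction is immediate from the earlier proposition, perhaps with a sentence noting that the common simple module $S$ is the one on which $X$ acts by $0$.

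For $(2)\Rightarrow(3)$, I would argue as follows. Suppose $V$ is $p$-semiartinian and $S$-homogeneous with $xS=0$ for all $x\in X$. Fix a composition series $(V_i)_{i\in I}$ of $V$ (in the transfinite sense of Section \ref{s.1}), so each $V_{i+1}/V_i$ is one-dimensional and isomorphic to $S$, hence each $x\in X$ sends $V_{i+1}$ into $V_i$. Given $v\in V$, choose the least index $i$ with $v\in V_i$; one checks $i$ is a successor, say $i=j+1$, so $v\in V_{j+1}\setminus V_j$. For any sequence $x_1,x_2,\dots$ in $X$, applying $x_1$ drops the "level" strictly: $x_1 v\in V_j$. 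The key point is that the relevant levels lie in a well-ordered set, so a strictly descending chain of levels must be finite; after finitely many steps $x_n\cdots x_1 v=0$. I would phrase this carefully: define, for $0\ne u\in V$, $\ell(u)$ to be the least ordinal $\alpha$ with $u\in V_\alpha$, note $\ell(u)$ is always a successor ordinal, and that $\ell(xu)<\ell(u)$ whenever $xu\ne 0$; then a descending sequence of ordinals terminates.

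The implication $(3)\Rightarrow(1)$ is the one I expect to be the main obstacle, since it must actually produce a well-ordered basis exhibiting strict triangularization from the purely combinatorial hypothesis of topological nilpotency. The natural approach is to build the invariant flag by transfinite recursion: let $W_0=0$, and at each stage consider $\overline{V}=V/W_\alpha$; topological nilpotency passes to this quotient (any sequence in $X$ still eventually kills any vector, working modulo $W_\alpha$), and it forces the existence of a nonzero vector $\bar v\in \overline V$ annihilated by all of $X$ — otherwise one could, by a choice/diagonalization argument, produce a sequence $x_1,x_2,\dots$ and a vector never sent to zero, contradicting topological nilpotency. (This existence-of-a-common-null-vector step is the crux; it is the analogue of the standard fact that a topologically nilpotent family has a common kernel, and is exactly where the cofinite-topology convergence is used.) Set $W_{\alpha+1}$ to be the preimage of $\KK\bar v$, take unions at limits, and continue until $W=V$; the resulting transfinite chain is a composition series with all factors the trivial one-dimensional module on which $X$ acts as $0$, which by Proposition \ref{p.strictTR} (or directly) says $X$ is strictly triangularizable. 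I would remark that this reproves \cite[main result]{M2}, with the only real work being the common-null-vector lemma, and that the rest is bookkeeping with transfinite induction and well-ordered composition series exactly as in Section \ref{s.1}.
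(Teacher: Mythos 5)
Your proposal is correct and follows essentially the same route as the paper: (1)$\Leftrightarrow$(2) via Proposition \ref{p.strictTR}, the descent-on-ordinal-levels argument for (2)$\Rightarrow$(3) is the paper's transfinite induction in slightly different clothing, and your ``common null vector in every quotient, else contradict topological nilpotency'' step is exactly the paper's proof that $V$ is $p$-semiartinian and homogeneous. No gaps.
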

\begin{proof}
(1)$\Leftrightarrow$(2) is already noted in Proposition \ref{p.strictTR}. \\
(3)$\Rightarrow$(2) We first prove that $V$ is p-semiartinian. Suppose that for some proper $A$-submodule $W$ of $V$, there is no non-trivial $0$-eigenvector of $X$ in $V/W$; that is, for every $w\in V \setminus W$, there exists $a\in X$ such that $aw\notin W$. Let $w\in V\setminus W$. Then there is $a_1\in X$ such that $a_1w\notin W$; applying the assumption again on $a_1w\notin W$, we obtain $a_2\in X$ such that $a_2a_1w\notin W$. Recursively, we then construct a sequence $(a_n)_n\subset X$ such that $a_n\dots a_2a_1w\notin W$ for all $n$, and in particular, $a_n\dots a_1w\neq 0$. This contradicts the topological nilpotency.  \\
The homogeneous property follows by noting that if $S=\KK \overline{v}$ is a 1-dimensional simple subquotient of $V$ on which $X$ acts topologically nilpotent, then for each $x\in X$, there is $\lambda\in\KK$ such that $x\overline{v}=\lambda \overline{v}$; but then by topological nilpotency, there is some $n$ with $x^nv=0$, so $0=x^n\overline{v}=\lambda^n \overline{v}$ and thus $\lambda=0$. Hence, we get $xS=0$.\\
(2)$\Rightarrow$(3) Proceed by transfinite induction on the ordinal $\alpha$ such that $V$ admits a composition series of length $\alpha$. If the statement is true for all $\beta<\alpha$, let $(x_n)_n$ be a sequence of elements of $X$, and $v\in V=V_\alpha$. Then $x_1v\in V_\beta$ for $\beta<\alpha$: this is obvious if $\alpha$ is a limit ordinal, and if $\alpha=\gamma+1$ is a successor, then $x_1v\in V_\gamma$ by the property of $V_{\gamma+1}/V_\gamma$, so we can take $\beta=\gamma$. By the induction hypothesis applied for $(x_n)_{n\geq 2}$ and $x_1v\in V_\beta$, we get $x_nx_{n-1}\dots x_2 (x_1v)=0$.
\end{proof}

\noindent As noted also in the beginning of our investigation, it is often of interest to consider triangularization of finitely generated algebras. In this case, we will note that again strict triangularization reduces to the stronger locally finite type of triangularization, but here without the Noetherian hypothesis, which will further motivate the statement of our main result. 

\begin{lemma}\label{l.Koenig}
Let $X$ be a finite subset of $\End(V)$. If $X$ is topologically nilpotent (equivalently, strictly triangularizable), then $V$ is a locally finite $A=\KK\langle X\rangle$-module, where $\KK \langle X \rangle$ denotes the subalgebra of $\End (V)$ generated by $X$.
\end{lemma}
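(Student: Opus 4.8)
The plan is to show that for each $v \in V$, the submodule $A v = \KK\langle X\rangle v$ is finite-dimensional, using the finiteness of $X$ together with topological nilpotency to bound the ``depth'' to which words in $X$ can act nontrivially on $v$. The natural device is a König's Lemma / finitely-branching tree argument (which is why the lemma is named \texttt{l.Koenig}). Consider the rooted tree $T$ whose vertices at level $k$ are words $w = x_{i_k}\cdots x_{i_1}$ of length $k$ with letters in $X$ such that $w v \neq 0$, where $w'$ is a child of $w$ when $w' = x w$ for some $x \in X$. Since $X$ is finite, $T$ is finitely branching (each vertex has at most $|X|$ children).

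The key step is to argue that $T$ has no infinite branch: an infinite branch would give a sequence $(x_n)_n$ in $X$ with $x_n x_{n-1}\cdots x_1 v \neq 0$ for all $n$, directly contradicting the topological nilpotency of $X$ (equivalently, using Theorem \ref{t.sut0}, its strict triangularizability). By König's Lemma, a finitely-branching tree with no infinite branch is finite; hence $T$ is finite, say with vertices of length at most $N$. Then every word in the letters of $X$ of length $> N$ annihilates $v$, so $A v$ is spanned by $\{ w v : w$ a word of length $\le N\} \cup \{v\}$, a finite set. Therefore $\dim_{\KK}(A v) < \infty$, and since $v$ was arbitrary, $V = \sum_{v \in V} A v$ is locally finite as an $A$-module.

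One small technical point to handle cleanly: the algebra $A = \KK\langle X\rangle$ may or may not be taken to be unital, but in either case $A v$ is the span of $\{v\}$ together with all $w v$ for nonempty words $w$ in $X$, so the bound above still gives finite dimension; if one prefers to work with the monoid generated by $X$ including the empty word, the tree argument is unchanged. The main (and really only) obstacle is making the tree-theoretic argument precise — in particular, being careful that the absence of an infinite branch is exactly what topological nilpotency supplies, and invoking König's Lemma correctly for a tree that is finitely branching but a priori of unbounded (countable) height. Everything else is routine.
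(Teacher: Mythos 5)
Your proposal is correct and follows essentially the same route as the paper: the paper's proof also builds the finitely-branching tree of words $w$ in $X$ with $wv \neq 0$, observes that topological nilpotency forbids an infinite ray, and invokes K\"onig's Lemma to conclude that $Av$ has a finite spanning set. The only implicit point worth noting (true in both arguments) is that the set of such words is prefix-closed, since $wv = 0$ forces $(xw)v = 0$, so it genuinely forms a subtree.
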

\begin{proof} 
Let $v \in V$ be given, and let $X = \{ x_1,\ldots , x_n\}$. We build a rooted tree $\mathcal{T}$, whose vertices will be labeled by the sets $(X^k)_{k\geq 0}$ ($X^0=\{1\}$), as follows: to begin, create a vertex (the root) which we label $1$. Next, for each $x_i \in X$, add a vertex labeled $x_i$ if $x_iv \neq 0$, and connect each such vertex to $1$ via an edge. Now, suppose that for all $k\geq 1$ and all $k$-tuples $\tau = (x_{i_k},\ldots , x_{i_1}) \in X^k$ we have defined the vertex $\tau$. To build the $X^{k+1}$ level of $\mathcal{T}$, we add new vertices as follows: for each $x_i \in X$, create a vertex labeled $(x_i,\tau ) := (x_i,x_{i_k},\ldots , x_{i_1}) \in X^{k+1}$ if and only if $(x_ix_{i_k}\cdots x_{i_1})v \neq 0$, and connect such a vertex to vertex $\tau$. This procedure produces a locally-finite connected graph, which, by the topological nilpotency of $X$, can contain no infinite ray. By K\"onig's Lemma (see for example \cite{T}), 
 it follows that this graph has finitely-many vertices. But then this shows that $Av$ has a finite spanning set, and so it is finite dimensional. Therefore, $V$ is locally finite.
\end{proof}


\begin{corollary}\label{c.connloc}
Let $A$ be a finitely-generated connected algebra. Then every f-semiartinian (equivalently, p-semiartinian) $A$-module is locally finite (hence locally p-finite). 
\end{corollary}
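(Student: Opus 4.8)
\textbf{Proof proposal for Corollary \ref{c.connloc}.}

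The plan is to reduce the statement to the finitely-generated case covered by Lemma \ref{l.Koenig}. Let $A$ be a finitely-generated connected algebra, generated as an algebra by a finite set $x_1,\dots,x_n$, and let $V$ be an f-semiartinian $A$-module; we may assume $V$ is faithful by passing to $A/\operatorname{ann}_A(V)$, which is still finitely generated and whose finite-dimensional simple modules are still all $1$-dimensional (being a quotient of $A$). First I would observe that since $A$ is connected, \emph{every} finite-dimensional simple $A$-module is $1$-dimensional: indeed $A$ has a unique finite-codimensional maximal ideal $M$ with $\dim(A/M)=1$, and any finite-dimensional simple module has annihilator a finite-codimensional maximal ideal. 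Hence by Proposition \ref{p.fdsimplpf} an f-semiartinian $A$-module is automatically p-semiartinian, so the two hypotheses in the statement are equivalent and it suffices to treat the p-semiartinian case.

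Next I would identify $A$ with the subalgebra $\KK\langle X\rangle \subseteq \End(V)$ generated by $X=\{x_1,\dots,x_n\}$ (using faithfulness). I claim each generator $x_i$ acts topologically nilpotently on $V$ in the strong sense needed, but more to the point I want to show the whole set $X$ is topologically nilpotent. Since $V$ is p-semiartinian and homogeneous is \emph{not} assumed, I cannot directly invoke Theorem \ref{t.sut0}; instead I would argue directly with the composition series. Fix $v\in V$ and a sequence $(a_k)_k$ with each $a_k\in X$. Here is the subtle point: a p-semiartinian module need not be strictly triangularizable, so $a_1 v$ need not drop in the composition series. The resolution is that we do \emph{not} need topological nilpotency of $X$ itself; we only need that $Av$ is finite-dimensional. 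So I would instead run the König's-Lemma tree argument of Lemma \ref{l.Koenig} in a modified form: build the tree of nonzero monomials $x_{i_k}\cdots x_{i_1}v$ as there. If this tree were infinite, König gives an infinite ray, i.e. a sequence $(a_k)$ with $a_k\cdots a_1 v\neq 0$ for all $k$. To derive a contradiction I would look at the $A$-submodule $N=Av$: if $N$ were finite-dimensional we would already be done, so assume for contradiction it is not; but $N$ is a p-semiartinian, hence semiartinian, module, so it has a nonzero socle, and picking a simple submodule $S=\KK w$, every $x\in X$ acts on $w$ by a scalar. The trouble is those scalars need not vanish.

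So the real content — and the main obstacle — is precisely handling the possibly-nonzero characters. I would resolve it as follows: since $A$ is connected, consider the filtration of $V$ by the submodules $V[\mathfrak m^\infty]$ supported at each character $\mathfrak m$; but $A$ being connected means it has a \emph{unique} finite-dimensional simple, so \emph{every} simple subquotient of the p-semiartinian module $V$ has the same $1$-dimensional character $\chi$. Then $\chi(x_i)\in\KK$ is fixed, and twisting: replace each generator $x_i$ by $x_i-\chi(x_i)\cdot 1$. These generate the same algebra $A$ (as $1\in A$), and now on every simple subquotient of $V$ they act as $0$, so $V$ becomes a homogeneous p-semiartinian module whose simple $S$ satisfies $(x_i-\chi(x_i))S=0$. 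By Theorem \ref{t.sut0}, the finite set $X'=\{x_i-\chi(x_i)\mid i\}$ is strictly triangularizable, hence topologically nilpotent, hence by Lemma \ref{l.Koenig} $V$ is a locally finite $\KK\langle X'\rangle$-module; but $\KK\langle X'\rangle=A$, so $V$ is locally finite as an $A$-module. Finally, locally finite plus p-semiartinian gives locally p-finite (or cite Proposition \ref{p.fdsimplpf} again), completing the proof. I would flag the twisting step and the use of connectedness to force a single character $\chi$ on all subquotients as the crux; everything else is the König's-Lemma mechanics already in Lemma \ref{l.Koenig}.

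\vspace{1mm}\noindent\emph{Remark on alternatives.} If one wished to avoid Theorem \ref{t.sut0}, the tree argument of Lemma \ref{l.Koenig} can be carried out verbatim after the twist, since after twisting the unique simple character is $0$ and so along any infinite ray $(a_k)$ the vectors $a_k\cdots a_1 v$ would generate submodules descending in the (well-ordered) composition series of $V$, which is impossible; this reproves topological nilpotency of $X'$ internally and then finite-dimensionality of $Av=\KK\langle X'\rangle v$ follows from König as before.
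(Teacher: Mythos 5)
Your proposal is correct and follows essentially the same route as the paper: the paper writes $A=\KK\oplus M$ with $M$ the unique cofinite maximal ideal and twists the finite generating set by the projection $\epsilon:A\to\KK$ (which is exactly your character $\chi$) so that the generators land in $M$, then invokes Theorem \ref{t.sut0} for topological nilpotency and Lemma \ref{l.Koenig} for local finiteness. The exploratory detour in the middle of your write-up is unnecessary, but the final twisting argument is precisely the paper's proof.
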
 

\begin{proof} 
Since $A$ is connected, finite dimensional simple modules are 1-dimensional, so let $V$ be p-semiartinian, with $v$ a non-zero element of $V$. Write $A=\KK\oplus M$, with $M$ its maximal cofinite ideal, and write $\epsilon : A \rightarrow \KK$ for the projection afforded by this decomposition. After potentially taking the quotient of $A$ by the annihilator of $V$, we consider $A$ as a subalgebra of $\End(V)$. Note that $M$ acts as 0 on the simple, and so $M$ is topologically nilpotent by Theorem \ref{t.sut0}. Furthermore, if $\{ a_1,\ldots , a_n \}$ is a finite generating set for $A$, then $\{ a_1 - \epsilon (a_1), \ldots , a_n-\epsilon (a_n)\}$ is a finite generating set for $A$ contained in $M$. It follows that $M$ itself is finitely generated (as both a non-unital algebra and ideal of $A$), and so Lemma \ref{l.Koenig} applies.   
\end{proof}

\noindent Before giving our main result of this section, we note a converse to Proposition \ref{p.stcnil}.

\begin{lemma}\label{l.nconn}
Let $H$ be a nil algebra, and $A=H_u$. Then $A$ is connected local. If moreover $H$ is finitely generated (equivalently, $A$ is finitely generated), then $A$ is strongly connected.
\end{lemma}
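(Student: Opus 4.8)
The plan is to establish the two assertions in sequence, using the earlier machinery on connected algebras and Lemma \ref{l.Koenig} / Corollary \ref{c.connloc}. For the first assertion, since $H$ is nil, every element of $H$ is nilpotent, so $H$ consists of nilpotent elements and in particular $H$ is a nil (two-sided) ideal of $A=\KK\oplus H$. A nil ideal is contained in the Jacobson radical, so $H\subseteq J(A)$; conversely $A/H\cong\KK$ is semisimple, hence $J(A)\subseteq H$, giving $J(A)=H$. Therefore $A/J(A)\cong\KK$, which shows $A$ has a unique maximal (left, right, or two-sided) ideal, namely $H$, of codimension $1$. Thus $A$ is local, and since $A/J(A)\cong\KK$ its unique simple module is $1$-dimensional, so $A$ is connected. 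This proves $A$ is connected local.

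For the second assertion, assume $H$ (equivalently $A$) is finitely generated. We must show every $A$-module is locally finite; by Corollary \ref{c.connloc} it suffices to show every f-semiartinian (equivalently p-semiartinian) $A$-module is locally finite, but here we want the stronger statement that \emph{every} module is locally finite, so we argue directly. Let $V$ be any $A$-module and $v\in V$; after replacing $A$ by $A/\mathrm{ann}_A(V)$ we may regard $A\subseteq\End(V)$, noting the image of a nil algebra is still nil and finitely generated. Let $\{h_1,\dots,h_n\}$ be a finite generating set for $H$ (as a non-unital algebra); these lie in $H=J(A)$ and generate $A$ together with $1$. The key point is that $X=\{h_1,\dots,h_n\}$, viewed in $\End(V)$, is topologically nilpotent: for any sequence $(h_{i_k})_k$ from $X$ and any $w\in V$, the product $h_{i_m}\cdots h_{i_1}$ is an element of $H$, hence nilpotent as an operator — but this only gives that \emph{each fixed} product power vanishes, not that an arbitrary non-repeating sequence truncates, so topological nilpotency needs a genuine argument. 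Here is where I would use that $Aw$ need not yet be known finite; instead I would run König's lemma as in Lemma \ref{l.Koenig} but must first secure the no-infinite-ray condition.

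The hard part — and the step I expect to be the main obstacle — is exactly establishing topological nilpotency of the finite generating set $X\subseteq H$ from the mere fact that $H$ is nil and finitely generated. The subtlety is that "nil" only says each individual element $a\in H$ satisfies $a^{k(a)}=0$, whereas topological nilpotency of $X$ requires that for every $v$ and every sequence $(h_{i_k})$, some finite truncated product kills $v$; a priori the products $h_{i_m}\cdots h_{i_1}$ are distinct elements of $H$ with no uniform nilpotency index. I would resolve this by passing to the module-with-annihilator picture and invoking that $A$ is connected local (just proved) together with the structure of p-semiartinian modules: once we know (via Corollary \ref{c.connloc}, which applies since $A$ is finitely generated and connected) that every p-semiartinian $A$-module is locally finite, we only need that an arbitrary $A$-module $V$ is p-semiartinian, equivalently that $H=J(A)$ acts "topologically nilpotently." For this I would show that $V$, as a module over the finitely generated nil algebra $H$, is semiartinian: every nonzero quotient $V/W$ has a nonzero element annihilated by all of $H$. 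If not, one builds (exactly as in the proof of (3)$\Rightarrow$(2) in Theorem \ref{t.sut0}, run in reverse) an infinite sequence $h_{i_1},h_{i_2},\dots$ from the generating set with $h_{i_m}\cdots h_{i_1}w\notin W$ for all $m$; then König's lemma as in Lemma \ref{l.Koenig} forces $Hw$ to be infinite-dimensional, contradicting nothing yet — so instead I would use a Nagata–Higman / Shirshov-type input, or more simply the observation that a finitely generated nil algebra acting on the cyclic module $Hw$ makes $Hw$ a finitely generated module over a nil finitely generated algebra, which by the local-connectedness already proved forces $J(A)$ to act nilpotently on each finite-dimensional subquotient; wrapping this into a transfinite/König argument yields that $V$ is semiartinian, hence p-semiartinian (as $A/J(A)\cong\KK$), hence locally finite by Corollary \ref{c.connloc}. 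Therefore $A$ is strongly connected. $\Box$
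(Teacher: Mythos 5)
Your first paragraph is correct and is exactly the paper's argument: $H$ is a nil ideal of $A=\KK\oplus H$ of codimension one, hence $H=J(A)$, $A/J(A)\cong\KK$, and $A$ is connected local.

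For the second assertion the paper does something much shorter than what you attempt, and the difference matters. The paper's proof does not try to show that an arbitrary $A$-module is semiartinian or locally finite; it only observes that, since $A/J(A)=\KK$, every semiartinian $A$-module is f-semiartinian, and then invokes Corollary \ref{c.connloc} (which needs only that $A$ is finitely generated and connected) to conclude that every semiartinian module is locally p-finite. That is the entire content of the second step. The stronger statement you are chasing --- that \emph{every} $A$-module is locally finite, which is what the literal definition of ``strongly connected'' demands --- cannot be derived from ``finitely generated and nil,'' and in fact fails: for a Golod--Shafarevich algebra $H$ \cite{GS} (finitely generated, nil, infinite dimensional), the cyclic module $A\cdot 1=A$ is infinite dimensional, so the regular module of $A=H_u$ is not locally finite. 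Correspondingly, your attempted repairs do not work: topological nilpotency of a finite generating set does not follow from nilpotency of each individual element (this is exactly the uniform-index problem you identify), K\"onig's lemma as in Lemma \ref{l.Koenig} only converts topological nilpotency into local finiteness and cannot produce topological nilpotency, and Nagata--Higman requires nil of \emph{bounded} index, which a general nil algebra does not have. So the gap you flag at the end of your proposal is genuine and not closable as stated; the argument that actually goes through (and is the one the paper gives) establishes only the semiartinian-module form of the conclusion, namely that every semiartinian (equivalently, triangularizable) $A$-module is locally p-finite.
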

\begin{proof}
Note that $H$ is an ideal of $A$, which is nil, so it is contained in the Jacobson radical, and thus $H=J(A)$ since $H$ has codimension $1$. Suppose in addition that $H$ is finitely generated. Since $A/J(A)=\KK$, every semiartinian module is f-semiartinian, and by Corollary \ref{c.connloc}, it has to be locally p-finite. Therefore, $A$ is strongly connected. 
\end{proof}


It may be useful to note that by the celebrated examples by Golod and Shafarevich \cite{GS}, there are finitely generated nil algebras which are not finite dimensional nor nilpotent. See also \cite{LS}.

\begin{definition}\label{d.RecNilAlg}
Let $\mathcal{A}$ be a class of non-unital algebras. We define left/right recursively-$\mathcal{A}$ algebras inductively as follows:
\begin{enumerate}
\item[(i)] $A$ is called 0-$\mathcal{A}$ if $A$ is in $\mathcal{A}$. We say that the set $c(A)=\{A\}$ is the set of the constituents of $A$. 
\item[(ii)] $A$ is left (respectively, right) $n$-$\mathcal{A}$ if there is a sequence of subalgebras $0=A_0\subset A_1\subset\dots \subset A_n$ and $B_i, \, 0\leq i\leq n$, such that $A_{i-1}B_i\subseteq B_iA_{i-1}+A_{i-1}$ (respectively, $B_iA_{i-1}\subseteq A_{i-1}B_i+A_{i-1}$), $A_i$ is generated by $B_i$ and $A_{i-1}$, and each $B_i$ is left (respectively, right) $k$-$\mathcal{A}$ for some $k=k(i)\leq n-1$ (depending on $i$). We say that the set $c(A)=\bigcup\limits_{i}c(B_i)$ is the set of constituents of $A$. 
\item[(iii)] We say that an algebra is left/right recursively-$\mathcal{A}$, or left/right R-$\mathcal{A}$, if $A$ is left/right $n$-$\mathcal{A}$ for some natural number $n$, and that it is R-$\mathcal{A}$ if it is both left and right R-$\mathcal{A}$. 
\item[(iv)] Finally, if $A$ is a unital algebra, we say that it is left/right/two-sided R-$\mathcal{A}$ if $A$ is the unitalization of a left/right/two-sided R-$\mathcal{A}$ algebra. 
\end{enumerate}
\end{definition}


We list here some instances of this definition. 
We will say that an algebra is recursively nil, or R-nil, if it is R-$\mathcal{A}$ where $\mathcal{A}$ is the class of nil algebras; we say that an algebra is recursively commutative, or R-commutative if it is R-$\mathcal{A}$ where $\mathcal{A}$ is the class of commutative algebras. We note that by the above definition, a solvable algebra is nothing but a 1-commutative algebra. If $\mathcal{A}$ is the class of cyclic algebras (by which we simply mean algebras generated by a single element), we say that an R-$\mathcal{A}$ algebra is an R-cyclic algebra. In this case, its constituents will be cyclic algebras $H_i=\langle x_i\rangle=x_i\KK[x_i]$ (or $H_i=\KK[x_i]$ if we are in the unital case), and we will also say that the set of constituent elements is the set of elements $(x_i)$. For another example, note also that a finitely generated commutative algebra is a recursively cyclic algebra (or R-cyclic algebra), and moreover, an Ore-solvable algebra is also an R-cyclic algebra (in fact, it is a 1-cyclic algebra). Also, for another example, Corollary \ref{c.conn} shows that the class of recursively connected algebras coincides to that of connected algebras, and that of recursively finitely generated algebras with that of finitely generated algebras. 

We now observe another very general situation where strict triangularization reduces to the more special locally finite type of triangularization.   

\begin{proposition}\label{t.sut}
Let $A$ be a left recursively nil algebra. Then $A_u$ is connected. 
Moreover, if $A$ is finitely generated, then every f-semiartinian (equivalently, block triangularizable) $A$-module $V$ is locally p-finite homogeneous.
\end{proposition}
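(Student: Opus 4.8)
The plan is to prove the two assertions separately, using induction on the recursive rank $n$ of $A$ as a left $n$-$\Aa$ algebra (with $\Aa$ the class of nil algebras), and leaning on Lemma \ref{l.connext}(1), Corollary \ref{c.connloc}, and Lemma \ref{l.nconn}. For the first assertion, I would argue that $A_u$ is connected. When $n=0$, $A$ itself is nil, and Lemma \ref{l.nconn} gives that $A_u = H_u$ is connected local, hence connected. For the inductive step, write $A = A_n$ with the filtration $0 = A_0 \subset A_1 \subset \dots \subset A_n = A$ and subalgebras $B_i \subseteq A_i$ satisfying $A_{i-1}B_i \subseteq B_iA_{i-1}+A_{i-1}$, with $A_i$ generated by $B_i$ and $A_{i-1}$, and each $B_i$ left $k(i)$-$\Aa$ for some $k(i) \le n-1$. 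By the induction hypothesis, each $(B_i)_u$ is connected. Then Corollary \ref{c.conn} (the iterated-extension form of Lemma \ref{l.connext}(1)) applies directly to the chain $\KK = A_0 \subset A_1 \subset \dots \subset A_n$ to conclude that $A_u = (A_n)_u$ is connected.

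For the second assertion, suppose in addition that $A$ is finitely generated. I would first observe that the unitalization $A_u = \KK \oplus A$ is then a finitely generated connected algebra: it is connected by the first part, and finitely generated because adjoining a unit to a finitely generated non-unital algebra keeps it finitely generated. Now let $V$ be an f-semiartinian $A$-module, equivalently an f-semiartinian $A_u$-module. Since $A_u$ is connected, every finite dimensional simple $A_u$-module is $1$-dimensional, so $V$ is actually p-semiartinian. Then Corollary \ref{c.connloc} applies to the finitely generated connected algebra $A_u$ and the p-semiartinian module $V$, yielding that $V$ is locally finite, hence locally p-finite. For homogeneity: since $A_u$ is connected with unique finite dimensional simple module $\KK$ (the character $\epsilon : A_u \to \KK$ that is zero on $A$), and $V$ is locally finite with all simple subquotients finite dimensional (being simple subquotients of a locally finite module over $A_u$), each such simple subquotient is $1$-dimensional and must be this unique character; hence all simple subquotients of $V$ are isomorphic to $\KK$ with $A$ acting as zero, so $V$ is locally p-finite homogeneous.

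The main obstacle I anticipate is making sure the hypotheses of Corollary \ref{c.conn} are matched cleanly against the indexing conventions in Definition \ref{d.RecNilAlg}: in particular checking that the non-unital chain $\KK = A_0 \subset \dots \subset A_n$ used in the definition of left $n$-$\Aa$ algebras really does satisfy $A_iB_i \subseteq B_iA_i + A_i$ and the generation condition in the exact form Corollary \ref{c.conn} requires, and that the constituents $(B_i)$ are legitimately of strictly smaller recursive rank so that the induction is well-founded. A secondary subtlety is the phrase ``equivalently, block triangularizable'' in the statement: one should note that for $A$ finitely generated (hence for $A_u$ finitely generated connected), f-semiartinian and p-semiartinian coincide precisely because connectedness forces finite dimensional simples to be $1$-dimensional, so the passage from block triangularizability to $p$-semiartinianness is exactly Corollary \ref{c.connloc}'s first step and needs no extra argument. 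Everything else — the local finiteness and homogeneity — is then a direct citation.
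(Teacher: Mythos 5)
Your proof is correct and follows essentially the same route as the paper's: Lemma \ref{l.nconn} for the nil base case, Corollary \ref{c.conn} for the recursive step establishing connectedness of $A_u$, and Corollary \ref{c.connloc} for the second assertion. Your explicit treatment of homogeneity (all $1$-dimensional simple subquotients must equal the unique character $\epsilon$ of the connected algebra $A_u$) spells out a detail the paper leaves implicit in its bare citation of Corollary \ref{c.connloc}, which by itself only asserts local p-finiteness.
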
 
\begin{proof} 
Any nil algebra is connected by Lemma \ref{l.nconn}, and the connectedness of $A_u$ follows directly from 
 Corollary \ref{c.conn} (see the remark above). The second claim then follows from Corollary \ref{c.connloc}.
\end{proof}

We now note the following general simultaneous strict triangularization result. The class of algebras it applies to may be regarded as an counterpart of polycyclic-by-finite groups. Namely, we say an algebra $A$ is recursively cyclic-finite, or recursively cyclic-by-finite, (although there is no specific order between the terms cyclic and finite) if $A$ is recursively {$\{$cyclic or finite dimensional$\}$}. As noted before, this class of algebras includes many algebras, such as commutative, skew-commutative or Ore-solvable algebras, alongside finite dimensional algebras and various others built up from these. The following result can also be seen as an extension of some well known facts, such as that finitely many commuting nilpotent endomorphisms can be simultaneously strictly triangularized (see also \cite{M1}), or finite dimensional nilpotent subalgebras of $\End(V)$ can be simultanelously strictly triangularized, or Levitski's theorem on finite semigroups of nilpotent operators of a finite dimensional vector space (see also \cite{M2}). We recall to the reader that all these can be viewed as consequence of a general representation theoretic fact: if $A$ is a finite dimensional local algebra, then every $A$-module is locally p-finite homogeneous, and hence, strictly triangularizable. (This follows also from the more general Lemma \ref{l.nconn}, since a finite dimensional local algebra is nil in our terminology).

\begin{theorem}\label{t.strict}
Let $A$ be a recursively cyclic-finite algebra. Then an $A$-module $V$ is locally p-finite homogeneous (equivalently, in this case, strictly triangularizable) if and only if each of its constituents acts locally nilpotent on $V$. 
\end{theorem}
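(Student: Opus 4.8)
The plan is to prove both implications by induction on the recursion depth $n$, with the core computational input provided by Lemma \ref{l.comp}. The ``only if'' direction is the easy one: if $V$ is strictly triangularizable, hence (by Proposition \ref{p.strictTR}) $V$ is homogeneous $p$-semiartinian with its unique simple $S$ killed by $X = $ the generating set, then in particular every constituent lies in the kernel of the character on $S$, so every constituent acts locally nilpotently on each finite-dimensional subquotient, hence locally nilpotently on $V$. For the harder ``if'' direction, assume every constituent acts locally nilpotently. First I would reduce to showing that every cyclic submodule $Av$ is finite-dimensional and that $V$ is $p$-semiartinian homogeneous with trivial simple; by Proposition \ref{t.sut} (applied after recording that cyclic-or-finite-dimensional nil pieces give connected unitalizations) combined with Lemma \ref{l.nconn} and Corollary \ref{c.connloc}, it suffices to establish local finiteness, since connectedness plus local finiteness plus the vanishing of the constituents on finite-dimensional subquotients forces homogeneity with the trivial character.

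\textbf{Key steps.} First, observe that the ``finite-dimensional'' constituents contribute finite-dimensional \emph{nil} algebras only after we know they act locally nilpotently; here the hypothesis ``acts locally nilpotently'' must be read as applying to each constituent \emph{algebra} acting on $V$, i.e. the image of $B_i$ in $\End(V)$ is nil when $B_i$ is a finite-dimensional constituent, and $x_i$ is locally nilpotent when $B_i = \KK[x_i]$ is a cyclic constituent. Second, run the induction on $n$: the base case $n=0$ is exactly the statement that a cyclic algebra generated by a locally nilpotent operator, or a finite-dimensional nil algebra, yields a locally $p$-finite homogeneous module with trivial simple — this is Lemma \ref{l.nconn} together with Corollary \ref{c.connloc} (the finitely-generated hypothesis being automatic for a single generator or a finite-dimensional algebra). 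Third, for the inductive step, take the top extension $A = A_n$ generated by $A_{n-1}$ and a constituent block $B_n$ with $A_{n-1}B_n \subseteq B_n A_{n-1} + A_{n-1}$; by the inductive hypothesis $V$ is locally $A_{n-1}$-finite and locally $A_{n-1}$-$p$-finite homogeneous, and by the inductive hypothesis applied to $B_n$ (whose recursion depth is $< n$) $V$ is locally $B_n$-finite. Then Theorem \ref{t.1st} gives that $V$ is locally $A$-finite. Fourth, to upgrade local finiteness to local $p$-finiteness homogeneous: restrict to a finite-dimensional simple subquotient $W$; it contains an $A_{n-1}$-eigenvector $v$ of character $\lambda$ with $\lambda$ trivial on all constituents of $A_{n-1}$ (inductive hypothesis), and then — exactly as in the proof of Theorem \ref{t.na1} — $Y = \KK[x]v$ (or the $B_n$-span of $v$ in the finite-dimensional case) is an $A$-submodule by Lemma \ref{l.comp}, hence equals $W$, and the nilpotency of the action of $B_n$ forces a common eigenvector killed by $B_n$, so $W$ is $1$-dimensional with character vanishing on every constituent. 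Since $A$ is generated by its constituents, this character is unique, giving homogeneity with trivial simple.

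\textbf{Main obstacle.} I expect the delicate point to be handling the ``finite-dimensional constituent'' case at the inductive step, where $B_n$ is not cyclic: Lemma \ref{l.comp} is stated for the cyclic extension $B = A\cdot\KK[x]$, so one cannot directly apply it when $B_n$ is an arbitrary finite-dimensional nil algebra. The fix is to note that a finite-dimensional nil algebra $B_n$ is itself Ore-solvable (indeed recursively cyclic, by Proposition \ref{p.OreNilAlg} / Proposition \ref{FinDimLCA}), so one may refine the filtration $A_{n-1} \subset A_n$ into a chain of \emph{cyclic} extensions and apply Lemma \ref{l.comp} one generator at a time — but one must check that local nilpotency of the whole image of $B_n$ propagates to local nilpotency of the relevant $\theta$-maps and of each successive generator along this refined chain, and that the commuting condition $A_{i-1}B_i \subseteq B_i A_{i-1} + A_{i-1}$ is preserved under refinement. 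This bookkeeping — ensuring the recursion can be flattened to the cyclic case without losing the hypotheses — is the real content; once it is in place, everything reduces to Theorem \ref{t.1st}, Lemma \ref{l.comp}, and Corollary \ref{c.connloc} as above. An alternative, cleaner route is to avoid refining and instead prove directly, by the same eigenvector-chasing argument, that for a finite-dimensional nil $B_n$ the module $B_n A_{n-1} v$ (which is finite-dimensional by Theorem \ref{t.1st}) is an $A$-submodule on which $B_n$ acts nilpotently, hence contains a vector killed by both $A_{n-1}$ and $B_n$; I would present whichever of these is shorter once the details are written out.
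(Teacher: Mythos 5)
Your proposal is correct in substance and shares the paper's skeleton: the converse direction via the module-theoretic characterization of strict triangularization (Proposition \ref{p.strictTR} / Theorem \ref{t.sut0}), and local finiteness of $V$ over $A$ obtained by applying Theorem \ref{t.1st} inductively along the recursive filtration. Where you diverge is in the upgrade from ``locally finite'' to ``locally p-finite homogeneous'': you propose to redo the eigenvector computation of Lemma \ref{l.comp} at each inductive step, in the style of Theorem \ref{t.na1}, which is exactly what creates your self-identified obstacle with non-cyclic finite-dimensional constituents (and, incidentally, Lemma \ref{l.comp} does not literally apply even in the cyclic case here, since the recursive condition $A_{i-1}B_i\subseteq B_iA_{i-1}+A_{i-1}$ only gives $A_{i-1}x_i\subseteq \KK[x_i]A_{i-1}+A_{i-1}$ rather than $A_{i-1}x_i\subseteq x_iA_{i-1}+A_{i-1}$). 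The paper instead observes that, after identifying $A$ with its image in $\End(V)$, each constituent becomes a nil algebra, so $A$ is recursively nil and hence connected by Proposition \ref{t.sut}; then local finiteness plus connectedness plus finite generation (Corollary \ref{c.connloc}) immediately yields locally p-finite homogeneous. The point is that the eigenvector-chasing has already been packaged, for an \emph{arbitrary} subalgebra constituent $H$ with $AH\subseteq HA+A$, in Lemma \ref{l.connext}(1): one takes an $A_u$-eigenvector $v$ killed by $A$, passes to an $H_u$-eigenvector $xv$ inside the finite-dimensional module $H_uv$, and checks $a(xv)=(h'a'+a'')v=0$. This is precisely your ``alternative, cleaner route'' at the end, so you should adopt it outright: it makes the refinement of finite-dimensional constituents into chains of cyclic extensions (and the attendant bookkeeping about preserving the Ore conditions and nilpotency of the $\theta$-maps) entirely unnecessary.
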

\begin{proof}
We only need to prove the if part. Also, it does no harm to identify $A$ with its image in $\End(V)$. In that case, each of its constituents is a nil algebra, $A$ is recursively nil, and therefore, connected by the previous Proposition. Since $V$ is a locally finite module over each of the constituents of $A$, applying Theorem \ref{t.1st} inductively (as $A$ is obtained by successive extensions), we obtain that $V$ is locally finite. Finally, the second part of the previous Proposition now shows that $V$ is locally p-finite homogeneous.  \\
(Note that since $A$ is finitely generated connected, by Corollary \ref{c.connloc}, we get that a module is triangularizable if and only if it is locally p-finite homogeneous). 
\end{proof}

Finally, also note another result which might be of interest from the perspective of the theory of nil algebras. In the next statement, finitely-generated-nil refers to the class of algebras that are nil and finitely generated.

\begin{corollary}
A recursively finitely-generated-nil algebra is nil. 
\end{corollary}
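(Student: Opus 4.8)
The plan is to prove, by induction on the recursion depth $n$, the slightly stronger assertion that every left $n$-recursively-finitely-generated-nil algebra is \emph{both nil and finitely generated}. The base case $n=0$ is immediate, since a $0$-recursively-finitely-generated-nil algebra is by definition a finitely generated nil algebra. For the inductive step one is given a chain $0=A_0\subset A_1\subset\dots\subset A_n=A$ with subalgebras $B_i$ such that $A_{i-1}B_i\subseteq B_iA_{i-1}+A_{i-1}$, with $A_i$ generated by $A_{i-1}$ and $B_i$, and with each $B_i$ left $k_i$-recursively-finitely-generated-nil for some $k_i\le n-1$. The outer hypothesis applies to every $B_i$, so each $B_i$ is nil and finitely generated; a second (inner) induction on $i$ will then show each $A_i$ is nil and finitely generated, the heart of which is an extension lemma for the single step from $A_{i-1}$ and $B_i$ to $A_i$.

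That extension lemma is the following: if a non-unital algebra $C$ is generated by two subalgebras $D,E$ with $DE\subseteq ED+D$, and $D$ and $E$ are each nil and finitely generated, then $C$ is nil and finitely generated. Its finite generation is clear, $C$ being generated by the union of the (finite) generating sets of $D$ and $E$. For nilpotency, one first invokes Lemma \ref{l.nconn} to see that the unitalizations $D_u$ and $E_u$ are strongly connected. Inside $C_u=\KK\oplus C$, the subspaces $\KK\cdot 1\oplus D$ and $\KK\cdot 1\oplus E$ are subalgebras isomorphic to $D_u$ and $E_u$ which together generate $C_u$, and a one-line computation from $DE\subseteq ED+D$ yields $D_uE_u\subseteq E_uD_u+D_u$; thus $C_u$ is a left extension of $D_u$ by $E_u$ in the precise sense of Lemma \ref{l.connext}(2), and that lemma makes $C_u$ strongly connected. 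By Proposition \ref{p.stcnil}, $C_u$ is then the unitalization of a nil algebra, i.e. $C_u=\KK\cdot 1\oplus J(C_u)$ with $J(C_u)$ nil. Finally, since $C$ is a codimension-one ideal of $C_u$ (because $C_u/C\cong\KK$) and $J(C_u)$ is contained in the maximal ideal $C$, the two codimension-one decompositions force $J(C_u)=C$, so $C$ is nil.

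Granting this, the inner induction goes through at once: $A_0=0$ is nil and finitely generated, and assuming the same for $A_{i-1}$, applying the extension lemma with $D=A_{i-1}$ and $E=B_i$ shows $A_i$ is nil and finitely generated; taking $i=n$ completes the outer induction and in particular shows that $A$ is nil. I expect the only place requiring attention to be the organization of the double induction — one must be careful to feed the $B_i$ (whose recursion depth has strictly decreased) to the outer hypothesis and $A_{i-1}$ (same depth, shorter chain) to the inner one — but no individual step is delicate, the substantive content having been relocated into Lemma \ref{l.connext}(2), which already encodes the stability of strong connectedness under one-sided extensions.
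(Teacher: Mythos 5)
Your argument is correct and runs through the same circle of results as the paper's proof --- Lemma \ref{l.nconn} (finitely generated nil $\Rightarrow$ strongly connected), the stability of strong connectedness under one-sided extensions, and Proposition \ref{p.stcnil} (strongly connected $\Rightarrow$ nil) --- but it is packaged differently. The paper fixes an arbitrary $A$-module $V$, shows it is locally finite over each constituent, propagates local finiteness to $A$ in one pass via Theorem \ref{t.1st}, obtains connectedness of $A$ from Corollary \ref{c.conn}, and only then applies Proposition \ref{p.stcnil} \emph{once}, at the top. You instead apply Proposition \ref{p.stcnil} at every level, extracting ``nil'' for each intermediate $A_i$ and carrying ``finitely generated'' as a strengthened induction hypothesis; this makes the double induction completely explicit (the paper leaves the recursion implicit in the word ``inductively'') and isolates a clean reusable extension lemma, at the cost of repeating the passage to the Jacobson radical $J(C_u)=C$ at each step. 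Both organizations are sound, and your identification $J(C_u)=C$ is justified: $C$ is a maximal left ideal of $C_u$, so $J(C_u)\subseteq C$, and both have codimension one.

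One point deserves care. You derive $D_uE_u\subseteq E_uD_u+D_u$ and then invoke Lemma \ref{l.connext}(2) ``in the precise sense'' of that statement. But the paper's own example with $M_2(\KK)$ (the subalgebras spanned by $\{I_2,X\}$ and $\{I_2,Y\}$) satisfies the unital-level condition $AH\subseteq HA+A$ with both factors strongly connected, yet $M_2(\KK)$ is not connected; so the unital-level containment alone cannot be the operative hypothesis for the connectedness half of the conclusion. What saves your argument is that you also hold the containment $DE\subseteq ED+D$ at the level of the non-unital augmentation ideals, which is exactly what Lemma \ref{l.connext}(1) needs to give connectedness of $C_u$; local finiteness of every module then follows from the Theorem \ref{t.1st} argument. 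You should phrase the step as ``apply part (1) to $D$ and $E$ for connectedness, and the Theorem \ref{t.1st} argument for local finiteness,'' rather than resting the whole step on the literal statement of part (2).
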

\begin{proof}
The proof goes along the same lines as that of Theorem \ref{t.strict} above. First, as there, such an algebra $A$ is connected. Let $V$ be an $A$-module. Each of the constituents of $A$ are finitely generated nil, and therefore strongly connected by Lemma \ref{l.nconn}. Thus, $V$ is locally finite (and in fact locally p-finite) as a module over each of these constituents, and again by Theorem \ref{t.1st}, we may inductively prove that $V$ is locally finite also as an $A$-module (and thus f-semiartinian). But $A$ is also finitely generated, and connected, and  Corollary \ref{c.connloc} shows that $V$ is locally p-finite. Hence, $A$ is strongly connected by definition. Finally, Proposition \ref{p.stcnil} implies that $A$ is nil. 
\end{proof}

\bigskip\bigskip\bigskip

\begin{center}
{\sc Acknowledgment}
M.C.I acknowledges the support of Simons Collaboration Grant 637866 for several visits to A.S. 

\end{center}


\bigskip

\vspace*{3mm} 
\begin{flushright}
\begin{minipage}{148mm}\sc\footnotesize

Jeremy Edison\\
Mount Mary University\\
Department of Mathematics, Fidelis Hall\\
Milwaukee, WI, USA\\
and\\
University of Iowa\\
Department of Mathematics, MacLean Hall \\
Iowa City, IA, USA\\%
{\it E--mail address}: {\tt
edisonj@mtmary.edu}\vspace*{3mm}

Miodrag Cristian Iovanov\\
University of Iowa\\
Department of Mathematics, MacLean Hall \\
Iowa City, IA, USA\\
and\\
Simion Stoilow Institute of the Romanian Academy,\\ 
010702 Bucharest, Romania\\
{\it E--mail address}: {\tt
yovanov@gmail.com; miodrag-iovanov@uiowa.edu}\vspace*{3mm}

Alexander Sistko\\ 
Manhattan College \\ 
Department of Mathematics, Research and Learning Center \\ 
Riverdale, NY, USA\\ 
and \\
Department of 
University of Iowa\\
Department of Mathematics, MacLean Hall \\
Iowa City, IA, USA\\%
{\it E--mail address}: {\tt
asistko01@manhattan.edu}\vspace*{3mm}

\end{minipage}
\end{flushright}
\end{document}